\setlist[itemize]{topsep=0pt, leftmargin=2em}
\setlist[enumerate]{topsep=0pt, leftmargin=2em}
\DeclareMathOperator{\dist}{dist}
\DeclareMathOperator{\arccosh}{arccosh}
\newtheorem{theorem}{Theorem}[section]
\newtheorem*{theorem*}{Theorem}
\newtheorem{proposition}[theorem]{Proposition}
\newtheorem{corollary}[theorem]{Corollary}
\newtheorem{lemma}[theorem]{Lemma}
\theoremstyle{remark}
\newtheorem{remark}[theorem]{Remark}
\numberwithin{equation}{section}
\title[A construction of CMC surfaces in $\mathbb{H}^2\times\mathbb{R}$ and the Krust property]{A construction of constant mean curvature surfaces in $\mathbb{H}^2\times\mathbb{R}$ and the Krust property}
\date{}
\author{Jesús Castro-Infantes}
\address{Departamento de Geometría y Topología, Universidad de Granada, Spain}
\email{jcastroinfantes@ugr.es}
\author{José M. Manzano}
\address{Departamento de Matemáticas, Universidad de Jaén, Spain}
\email{jmprego@ujaen.es}
\author{Magdalena Rodríguez}
\address{Departamento de Geometría y Topología and IMAG (Institute of Mathematics of Granada), Universidad de Granada, Spain}
\email{magdarp@ugr.es}
\subjclass[2020]{Primary 53A10; Secondary 53C30}
\keywords{Minimal surfaces, constant mean curvature surfaces, homogeneous 3-manifolds, Krust's Theorem}
\begin{document}

\begin{abstract}
We show the existence of a $2$-parameter family of properly Alexandrov-embedded surfaces with constant mean curvature $0\leq H\leq\frac{1}{2}$ in ${\mathbb{H}^2\times\mathbb{R}}$. They are symmetric with respect to a horizontal slice and a $k$ vertical planes disposed symmetrically, and extend the so called minimal saddle towers and $k$-noids. We show that the orientation plays a fundamental role when $H>0$ by analyzing their conjugate minimal surfaces in $\widetilde{\mathrm{SL}}_2(\mathbb{R})$ or $\mathrm{Nil}_3$. We also discover new complete examples that we call $(H,k)$-nodoids, whose $k$ ends are asymptotic to vertical cylinders over curves of geodesic curvature $2H$ from the convex side, often giving rise to non-embedded examples if $H>0$.

In the discussion of embeddedness of the constructed examples, we prove that the Krust property does not hold for any $H>0$, i.e., there are minimal graphs over convex domains in $\widetilde{\mathrm{SL}}_2(\mathbb{R})$, $\mathrm{Nil}_3$ or the Berger spheres, whose conjugate surfaces with constant mean curvature $H$ in $\mathbb{H}^2\times\mathbb{R}$ are not graphs.
\end{abstract}

\maketitle

\section{Introduction}

The theory of constant mean curvature surfaces ($H$-surfaces for short) in homogeneous 3-manifolds has drawn considerable attention during the last decades, being the homogeneous setting a natural step in the generalization of classical results in space forms. This can be exemplified by the half-space theorems given by Daniel and Hauswirth~\cite{DH} and by Hauswirth, Rosenberg and Spruck~\cite{HRS}, or by the recent classification of $H$-spheres in metric Lie groups obtained by Meeks, Mira, Pérez and Ros~\cite{MMPR}. Among simply connected homogeneous 3-manifolds, those whose isometry group has dimension $4$ form a $2$-parameter family $\mathbb{E}(\kappa,\tau)$ with $\kappa-4\tau^2\neq 0$. They contain the product spaces $\mathbb{H}^2\times\mathbb{R}=\mathbb{E}(-1,0)$ and $\mathbb{S}^2\times\mathbb{R}=\mathbb{E}(1,0)$, as well as the Lie groups $\mathrm{SU}(2)$, $\mathrm{Nil}_3$ and $\widetilde{\mathrm{SL}}_2(\mathbb{R})$ endowed with some special left-invariant Riemannian metrics, see~\cite{Dan,Man,MMPR}.

Daniel~\cite{Dan} revealed the existence of a Lawson-type isometric correspondence for $H$-surfaces in $\mathbb{E}(\kappa,\tau)$-spaces, which has become an effective tool in the construction of $H$-surfaces in $\mathbb{H}^2\times\mathbb{R}$ by means of conjugating a certain minimal
surface in $\mathbb{E}(4H^2-1,H)$, e.g., see~\cite{HST,P,MR,MaR,R,ManTor,Ple15,Ple13,CM}. This discloses the
fact that $H$-surfaces in $\mathbb{H}^2\times\mathbb{R}$ with $H=0$, $0<H<\frac{1}{2}$, $H=\frac{1}{2}$ and $H>\frac{1}{2}$ display different features, for their conjugate surfaces live in geometrically different ambient spaces. For instance, compact $H$-surfaces (resp.\ complete $H$-graphs) in $\mathbb{H}^2\times\mathbb{R}$ exist if and only if $H>\frac{1}{2}$ (resp.\ $0\leq H\leq\frac{1}{2}$). The value $H=\frac 12$, known in the literature as   critical mean curvature of $\mathbb{H}^2\times\mathbb{R}$, stands out for the fact that all complete $\frac{1}{2}$-graphs are entire~\cite{HRS}, whilst $H=0$ is the only value such that there are entire graphs contained in a horizontal slab~\cite{HMR2}. Note that $H$-cylinders (i.e., everywhere vertical surfaces that project onto a curve of $\mathbb{H}^2$ of constant geodesic curvature $2H$) also behave differently if they project onto circles ($H>\frac{1}{2}$), horocycles ($H=\frac{1}{2}$) or curves equidistant to a geodesic ($0\leq H<\frac{1}{2}$).

The aforesaid conjugate technique has been specially relevant in the case $H=0$, partly because Hauswirth, Sa Earp and Toubiana~\cite{HST} extended a theorem of Krust (unpublished, see~\cite[Theorem 2.4.1]{K}) showing that the conjugate minimal surface of a minimal graph in a product space $M\times\mathbb{R}$, where $M$ is a Riemannian surface with non-positive constant Gauss curvature, defined on a convex domain of $M$ must be a graph. This result has turned out to be very useful to deal with the embeddedness of the conjugate minimal surface (see for instance~\cite{K, MRT, MR, CM}). Although the Krust property was expected to extend to the cases $0<H\leq\frac{1}{2}$, in the present paper we show it does not hold for any $H>0$.

\begin{theorem}\label{th1}
  For any $H>0$, there are minimal graphs over convex domains in $\mathbb{E}(4H^2-1,H)$ whose conjugate $H$-surfaces in $\mathbb{H}^2\times\mathbb{R}$ are not embedded.
\end{theorem}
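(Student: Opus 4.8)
The plan is to prove Theorem~\ref{th1} by exhibiting, for each fixed $H>0$, an explicit minimal graph over a convex domain in $\mathbb{E}(4H^2-1,H)$ whose conjugate $H$-surface in $\mathbb{H}^2\times\mathbb{R}$ has a self-intersection; the natural candidate is a fundamental piece of a complete $H$-surface with $k$ symmetrically disposed ends (an $(H,k)$-nodoid), produced by Daniel's sister correspondence. To obtain the minimal graph I would solve a Jenkins--Serrin problem in $\mathbb{E}(4H^2-1,H)$: a minimal vertical graph $\Sigma$ over a convex geodesic polygon $\Omega\subset\mathbb{H}^2$ (possibly with ideal vertices), with boundary data alternating between $+\infty$ and $-\infty$ on consecutive edges and carrying the dihedral symmetry of a regular $2k$-gon. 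Since $\Omega$ is a geodesic polygon with interior angles at most $\pi$, it is convex, so $\Sigma$ is a graph over a convex domain and hence falls under the hypothesis of Krust's property. The finite part of $\partial\Sigma$ consists of the vertical fibers over the vertices of $\Omega$, which are vertical geodesics of the fibration, while each edge carrying $\pm\infty$ data produces an asymptotic vertical wall.

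Next, let $\Sigma^*\subset\mathbb{H}^2\times\mathbb{R}$ be the sister $H$-surface. Daniel's correspondence preserves the induced metric and the angle function and exchanges horizontal and vertical mirror symmetries; concretely, the boundary fibers of $\Sigma$ become horizontal symmetry curves of $\Sigma^*$ lying in a slice $\mathbb{H}^2\times\{t_0\}$, while the asymptotic walls over the geodesic edges of $\Omega$ become ends of $\Sigma^*$, each asymptotic to a vertical cylinder over a curve of geodesic curvature $2H$ in $\mathbb{H}^2$. Reflecting $\Sigma^*$ across these slices and across its vertical symmetry planes yields a complete $H$-surface with $k$ ends and $k$-fold symmetry. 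The orientation is the crucial choice here: the two admissible normals give either ends approaching their cylinders from the concave side (the embedded $(H,k)$-noid) or from the convex side; I take the latter, producing the $(H,k)$-nodoid.

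To detect the self-intersection I would argue from the asymptotic data. Each end lies asymptotically on the convex side of the convex curve $c_i$ of geodesic curvature $2H$ bounding its cylinder. When $H=0$ the $c_i$ are geodesics bounding disjoint half-planes, consistent with embeddedness via Krust; but for $H>0$ the curves $c_i$ are strictly convex (equidistant curves, horocycles or circles according to $2H\lessgtr 1$), and the $k$ convex regions they bound, arranged around the axis, overlap. Since adjacent ends are asymptotic to their cylinders from inside these overlapping regions, two sheets of the completed surface must cross; equivalently, the horizontal symmetry curve of the fundamental piece, completed by the $k$-fold symmetry, fails to be embedded in its slice. This shows $\Sigma^*$ is not embedded although it is the conjugate of a graph over a convex domain, proving the failure of Krust's property for every $H>0$.

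The main obstacle is precisely the built-in tension of the statement: convexity of $\Omega$ is the very hypothesis that forces embeddedness when $H=0$, so the argument must isolate the single feature that degenerates for $H>0$. Quantitatively, the delicate points are to compute, from the conjugate transformation of the shape operator and the normal, that the ends have asymptotic geodesic curvature exactly $2H$ and that the chosen orientation places the surface on the convex side; and then to convert the qualitative ``overlapping convex regions'' picture into an actual coincidence of two points of $\Sigma^*$. I expect the latter to require a monodromy/continuity estimate controlling the angle swept by the vertical symmetry planes along the slice boundary curve and comparing it with $2\pi/k$, which is where the hypothesis $H>0$ enters decisively.
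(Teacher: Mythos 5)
Your proposal reproduces, for $0<H\leq\frac12$, essentially the paper's own counterexample: the Jenkins--Serrin graph $\Sigma_{a,\infty}$ with alternating $\pm\infty$ data, conjugated into an $(H,k)$-nodoid whose ends sit on the convex side of their asymptotic $H$-cylinders. Two quantifier points need care, though. First, convexity of the domain is not automatic: for $k\geq 3$ and small $a$ the interior angle of $\Omega_{a,\infty}$ at the finite vertices exceeds $\pi$, so the paper takes $k=2$, where $\Omega_{a,\infty}$ is an ideal quadrilateral in $\mathbb{H}^2(4H^2-1)$ (or a strip in $\mathbb{R}^2$ when $H=\frac12$), hence convex for every $a$. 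Second, your ``overlapping convex regions'' mechanism does not hold for all parameters: by Proposition~\ref{prop:knodoids-embeddedness}(a) the nodoids with $0<H<\frac12$ \emph{are} embedded once $a>a_2$, so the self-intersection must be forced by taking $a<a_1$, i.e., by Lemma~\ref{lemma:distance-monotonicity}, making the asymptotic equidistant curves pass so close to the axis that adjacent ones share an ideal endpoint and the horizontal symmetry curves $\Gamma_{2i-1}^*$ close up and cross. These are fixable slips, and on this range your route and the paper's coincide.

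The genuine gap is the range $H>\frac12$, which the theorem (and the abstract, which names the Berger spheres explicitly) covers. There your construction cannot even be set up: the base of $\mathbb{E}(4H^2-1,H)$ is $\mathbb{S}^2(4H^2-1)$, so there are no ideal vertices, no unbounded convex geodesic polygons, and no Jenkins--Serrin theory with alternating $\pm\infty$ data of the kind you invoke; moreover, curves of geodesic curvature $2H>1$ in $\mathbb{H}^2$ are circles, so the picture of noncompact ends asymptotic to vertical cylinders ``from the convex side'' breaks down entirely. The paper treats this case by a different argument: a compact minimal Plateau disc (via Meeks--Yau) over a convex equilateral triangle $T\subset\mathbb{S}^2(4H^2-1)$, spanning a geodesic pentagon with two long vertical sides $\Gamma_1,\Gamma_2$ along which the normal rotates in opposite directions. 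By Equation~\eqref{eqn:theta}, the conjugate curves $\Gamma_1^*$ and $\Gamma_2^*$ have geodesic curvature respectively larger and smaller than $2H$; if the conjugate of the relevant piece were a graph, $\Gamma_2^*$ would be forced to spiral around a circle of curvature $2H$ intertwined with $\Gamma_1^*$, contradicting $\kappa_g(\Gamma_2^*)<2H$. Without an argument of this type (or some substitute valid in the Berger-sphere setting), your proposal proves the statement only for $0<H\leq\frac12$.
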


It would be very interesting to find some extra hypothesis to prove a Krust-type theorem for $0<H\leq\frac{1}{2}$.  The second author and Torralbo have recently investigated more specific tools to analyze the embeddedness of the conjugate surfaces, see~\cite{MT20}.

For $0< H\leq \frac 12$, the counterexamples in Theorem~\ref{th1} belong to a broader $2$-parameter family of highly symmetric $H$-surfaces in $\mathbb{H}^2\times\mathbb{R}$ that also depend on an integer $k\geq 2$ indicating the number of vertical planes of symmetry. We describe them as follows (more precise depictions will be given in Section~\ref{sec:constructions}):

\begin{theorem}\label{th3}
For each $H\in[0,\frac{1}{2}]$ and for each integer $k\geq 2$, there is a continuous $2$-parameter family of proper Alexandrov-embedded $H$-surfaces $\overline\Sigma_{a,b}^*\subset\mathbb{H}^2\times\mathbb{R}$, $a,b\in(0,\infty]$ not both of them equal to $\infty$, invariant by mirror symmetry about a horizontal slice and about a equiangular family of $k$ vertical planes containing a common vertical geodesic.
\begin{enumerate}[label=\emph{(\alph*)}]
  \item If $a,b<\infty$, then $\overline\Sigma_{a,b}^*$ is called a \emph{saddle tower}. It is singly periodic in the vertical direction, having genus $0$ and $2k$ ends in the quotient of $\mathbb{H}^2\times\mathbb{R}$ by a vertical translation, each end asymptotic to the quotient of half an $H$-cylinder.
  
  \item If $a=\infty$ and $b<\infty$, then $\overline\Sigma_{\infty,b}^*$ is called a \emph{$(H,k)$-noid} (or $H$-catenoid if $k=2$) and has genus $0$ and $k$ ends. If $H\in[0,\frac{1}{2})$, then each end is embedded and asymptotic to (and contained in the concave part of) a vertical $H$-cylinder. If $H=\frac{1}{2}$, then each end is tangent to the ideal boundary $\partial_\infty\mathbb{H}^2\times\mathbb{R}$ along a vertical ideal geodesic.
  
  \item If $a<\infty$ and $b=\infty$, then $\overline\Sigma_{a,\infty}^*$ is called a \emph{$(H,k)$-nodoid} (or $H$-catenodoid if $k=2$) and has genus $0$ and $k$ ends. If $H\in[0,\frac{1}{2})$, then each end is embedded and asymptotic to (and contained in the convex part of) a vertical $H$-cylinder. If $H=\frac 12$, then the corresponding $\frac12$-cylinder disappears at infinity.
\end{enumerate}
\end{theorem}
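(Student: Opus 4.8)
The natural approach is a conjugate Plateau construction based on Daniel's correspondence, realizing each $\overline\Sigma_{a,b}^*$ as the complete $H$-surface obtained by reflecting the conjugate of a minimal graph in $\mathbb{E}(4H^2-1,H)$. First I would fix $k\geq 2$ and $H\in[0,\frac12]$ and design an initial geodesic contour $\Gamma_{a,b}\subset\mathbb{E}(4H^2-1,H)$, namely a polygon alternating horizontal geodesics (horizontal lifts of the edges of a convex geodesic polygon $\Omega_{a,b}$ in the base of $\mathbb{E}(4H^2-1,H)$) with vertical geodesic segments over the vertices of $\Omega_{a,b}$. The polygon $\Omega_{a,b}$ is chosen with interior angle $\pi/k$ at the vertex lying on the prospective symmetry axis, and $a,b\in(0,\infty]$ record the lengths of its two remaining free edges; setting $a=\infty$ or $b=\infty$ turns the corresponding edge into a complete (or ideal) arc, which is precisely the degeneration that opens an end.

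Next I would span $\Gamma_{a,b}$ by a minimal surface and prove it is a graph. For bounded contours ($a,b<\infty$) I would solve the Plateau problem for a minimal disk and invoke a Rado-type argument: since $\Gamma_{a,b}$ projects monotonically onto the boundary of the convex domain $\Omega_{a,b}$, being vertical only over its vertices, the disk is a graph $\Sigma_{a,b}$ over $\Omega_{a,b}$. For the limiting cases I would instead appeal to the Jenkins--Serrin theory in $\mathbb{E}(\kappa,\tau)$, prescribing the behaviour along the ideal edge and using vertical and rotational cylinders as barriers to obtain existence, uniqueness and the required asymptotics of the graph; uniqueness also yields the continuous dependence on $(a,b)$. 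Convexity of $\Omega_{a,b}$ is used throughout, and is exactly the hypothesis under which a Krust-type statement was expected.

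I would then pass to the conjugate $H$-surface $\Sigma_{a,b}^*\subset\mathbb{H}^2\times\mathbb{R}$ and read its boundary through the standard conjugation dictionary: each horizontal geodesic of $\Gamma_{a,b}$ becomes an arc along which $\Sigma_{a,b}^*$ meets a vertical totally geodesic plane orthogonally, while each vertical fiber becomes an arc along which it meets a horizontal slice orthogonally. Verifying that two consecutive vertical planes meet along the axis with angle $\pi/k$ inherited from the interior angle of $\Omega_{a,b}$, I would extend $\Sigma_{a,b}^*$ by Schwarz reflection across all these mirror planes and across the slice $\mathbb{H}^2\times\{0\}$. The reflections close up exactly because of the $\pi/k$ angle, giving a complete surface $\overline\Sigma_{a,b}^*$ of genus $0$ invariant under the dihedral group generated by the $k$ equiangular vertical planes and the slice.

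Finally I would identify the three cases and settle embeddedness. The fundamental piece, being the conjugate of a graph over a convex domain, has boundary arcs that project injectively and can be controlled directly, which yields proper Alexandrov-embeddedness of the reflected surface even where it fails to be embedded. In case (a) the finite contour produces a vertical period, hence the singly periodic saddle tower with $2k$ ends, each conjugate to a boundary piece asymptotic to half an $H$-cylinder. In cases (b) and (c) the edge sent to $\infty$ opens an end, and the conjugate of the corresponding complete edge shows it is asymptotic to a vertical $H$-cylinder; computing the limit of the conjugation-invariant angle function along the end distinguishes accumulation on the concave side (the $(H,k)$-noid) from the convex side (the $(H,k)$-nodoid), with the degenerate behaviour at $H=\frac12$ read off from the ideal limit, respectively the disappearance at infinity, of the relevant cylinder. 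The hard part is the limit cases $a,b=\infty$: one must establish the precise asymptotics of the Jenkins--Serrin graphs and translate them into the asymptotic geometry of the conjugate ends, in particular determining the side on which each end approaches its $H$-cylinder, which is where the orientation subtlety for $H>0$ enters, and simultaneously prove proper Alexandrov-embeddedness when the full surface self-intersects, by exhibiting the fundamental piece as part of the boundary of a mean-convex region preserved by the reflections.
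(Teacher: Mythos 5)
There is a genuine gap in your treatment of case (a). For $a,b<\infty$ your initial contour is compact: horizontal lifts of the edges of the base polygon joined by finite vertical segments over its vertices. A Plateau solution spanning it is then a compact disk, its conjugate is a compact disk bounded by finitely many symmetry curves, and after all Schwarz reflections you obtain a singly periodic surface whose quotient by the vertical translation is a \emph{closed} surface. But the saddle towers of item (a) have $2k$ ends in that quotient, each asymptotic to the quotient of half an $H$-cylinder; their fundamental piece is necessarily non-compact, so no compact Plateau contour can produce them. In the paper the ends come precisely from infinite boundary data: the initial surface is the Jenkins--Serrin graph over $\Omega_{a,b}$ with alternating values $\pm\infty$ on the edges (equivalently, over the triangle $T_{a,b}$ with values $0,0,+\infty$), whose boundary consists of \emph{complete} vertical fibers over the vertices together with ideal horizontal geodesics at height $\pm\infty$, and Corollary~\ref{coro:JS} converts the latter into the asymptotic $H$-cylinder behaviour of the conjugate ends. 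Note also that replacing the $+\infty$ side $\overline{p_1p_2}$ by a horizontal lift at finite height would impose a spurious vertical mirror plane over that side, a symmetry the saddle towers do not possess. Your compact-contour scheme is the one suited to $H>\frac12$ (horizontal Delaunay surfaces, or the construction in Section~\ref{sec:krust} of the paper), not to Theorem~\ref{th3}; infinite Jenkins--Serrin data is needed in all three cases, not only in the limiting ones.

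Two further points. First, the noid/nodoid dichotomy cannot be detected by ``the limit of the conjugation-invariant angle function along the end'': the angle function tends to zero along every such end and carries no information about the side of the asymptotic cylinder. The paper's mechanism is the rotation of the normal along the vertical boundary geodesics, governed by $\theta'=2H-\kappa_g$ in Equation~\eqref{eqn:theta}; the sign of $\theta'$ is fixed by whether the adjacent boundary values are $+\infty$ or $-\infty$ (i.e., whether one considers $\Sigma_{\infty,b}$ or $\Sigma_{a,\infty}$), and Lemma~\ref{lemma:rotation-curvature} then decides whether $N^*$ points into or out of the projected domain, hence the concave or convex side. This is exactly where $H>0$ matters: swapping $+\infty$ and $-\infty$ yields non-congruent graphs because $\mathbb{E}(\kappa,\tau)$ with $\tau\neq0$ admits no orientation-reversing isometries (Remark~\ref{rmk:minimal-case}). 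Second, for $H=\frac12$ the Jenkins--Serrin problem lives in $\mathrm{Nil}_3$ over unbounded domains, where no suitable ``vertical or rotational cylinder'' barriers exist; the paper must construct the horizontal helicoids $\mathcal{H}_\mu$ of Lemma~\ref{lemma:helicoids-nil} both to solve the problem (Lemma~\ref{lemma:JS-nil}) and to prove, via divergence of the integral of the angle function (Lemma~\ref{lemma:distance-monotonicity}), that the would-be asymptotic horocylinders disappear at infinity, which is the actual content of the $H=\frac12$ assertions in items (b) and (c).
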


To describe the asymptotic behavior of these surfaces
  at infinity, we have considered the compactification of
  $\mathbb{H}^2\times\mathbb{R}$ obtained as the product of the
  compactifications of each factor, which is homeomorphic to
  $\overline D\times[-1,1]$, being $\overline D\subset\mathbb{R}^2$
  the closed unit disk. Consequently, we identify
  $\mathbb{S}^1=\partial\overline D$ with
  $\partial_\infty\mathbb{H}^2$, the asymptotic boundary $\partial_\infty (\mathbb{H}^2\times\mathbb{R})$ of $\mathbb{H}^2\times\mathbb{R}$ with $(\mathbb{S}^1\times[-1,1])\cup(\overline D\times\{\pm1\})$,  and $\mathbb{H}^2\times\{\pm\infty\}$ corresponds to $D\times\{\pm1\}$. We will say that a point $p \in \partial _\infty (\mathbb{H}^2\times\mathbb{R})$ belongs to the asymptotic boundary $\partial_\infty\Sigma$ of a surface $\Sigma\subset\mathbb{H}^2\times\mathbb{R}$ if there is a diverging sequence of points $p_n\in\Sigma$ such that $p_n$ converges to $p$ in the compactification.

\begin{figure}
\begin{center}
\includegraphics[width=6cm]{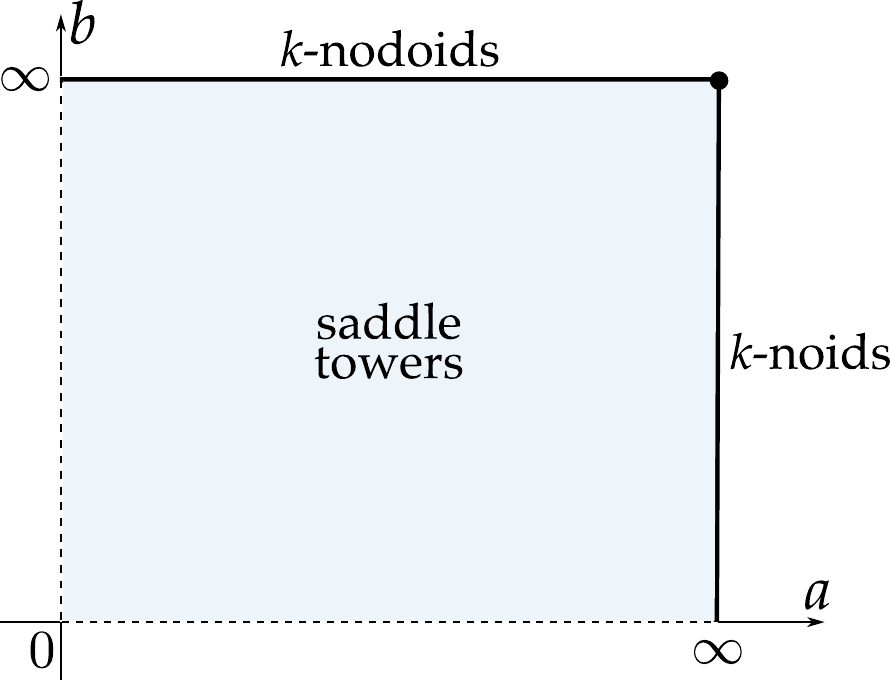}
\caption{Moduli space of the surfaces $\overline\Sigma_{a,b}^*$ for $H\in[0,\frac{1}{2}]$ and $k\geq 2$ fixed. The black dot represents the limit $\overline\Sigma_{\infty,\infty}^*$ (see Section~\ref{sec:constructions}) and the dashed lines indicate the minimal $k$-noids in $\mathbb{R}^3$ obtained in the limit after rescaling the metric.}\label{fig:moduli}
\end{center}
\end{figure}

The surfaces given by Theorem~\ref{th3}, include the minimal saddle towers and the minimal $k$-noids given by Morabito and the third author in~\cite{MR} (in the symmetric case) and by Pyo~\cite{P,P2} independently; the constant mean curvature $k$-noids constructed by Plehnert~\cite{Ple15}; and the catenoids with constant mean curvature
$\frac{1}{2}$ found by Daniel and Hauswirth in~\cite{DH}. This is why we call the examples $\overline\Sigma_{a,b}^*$ saddle towers and the examples $\overline\Sigma_{\infty,b}^*$ $(H,k)$-noids (or $H$-catenoids if $k=2$). These names are in turn inspired by their classical counterparts in $\mathbb{R}^3$. The examples $\overline\Sigma_{a,\infty}^*$ share topology and symmetry with the $(H,k)$-noids, but some of them are not embedded and their horizontal curves of symmetry have self-intersections (similar to a fundamental piece of the generating curve of the classical nodoids). This fact has inspired the name of $(H,k)$-nodoid (or $H$-catenodoid when $k=2$) for these examples. We would also like to point out the analogy with the case $H>\frac{1}{2}$, where horizontal Delaunay surfaces have been constructed by similar methods in~\cite{MT20}.

Our main contribution is to extend Plehnert's construction of $(H,k)$-noids by considering more general Jenkins-Serrin problems and to analyze the embeddedness of the constructed surfaces by means of a careful study of the orientation. The mean curvature $H$ of $\overline\Sigma_{a,b}^*$ becomes the bundle curvature of the ambient space of the conjugate minimal surface $\overline\Sigma_{a,b}\subset\mathbb{E}(4H^2-1,H)$. Thus when $H=0$ the surfaces $\overline\Sigma_{a,b}^*$ and $\overline\Sigma_{b,a}^*$ are congruent for all $a,b\in(0,\infty]$, so the subfamilies of $(0,k)$-noids and $(0,k)$-nodoids are congruent (see Remark~\ref{rmk:minimal-case}). When $H\neq 0$, different directions of rotation of the normal along a vertical geodesic $\Gamma\subset\overline\Sigma_{a,b}$ imply different curvature estimates of the conjugate curve $\Gamma^*\subset\overline\Sigma_{a,b}^*$, see Equation~\eqref{eqn:theta} and Lemma~\ref{lemma:rotation-curvature}. In particular, we exploit the fact that swapping the $+\infty$ and $-\infty$ boundary values in a Jenkins-Serrin problem (i.e., swapping $a$ and $b$ in our construction) outputs very different $H$-surfaces. The differences can be also illustrated by the case of spherical helicoids in Berger spheres, in which different directions of rotation of such an helicoid lead to rotationally invariant unduloid-type or nodoid-type surfaces in $\mathbb{H}^2\times\mathbb{R}$ with $H>\frac{1}{2}$, as observed by Torralbo and the second author~\cite[Proposition~1]{ManTor}. Likewise, vertical helicoids in $\widetilde{\mathrm{SL}}_2(\mathbb{R})$ or $\mathrm{Nil}_3$ give rise to embedded and non-embedded rotationally invariant $H$-surfaces in $\mathbb{H}^2\times\mathbb{R}$ with $0<H\leq\frac{1}{2}$, which are described, e.g., in~\cite[Proposition~5.2]{NSST} and the references therein.

If $H\in[0,\frac{1}{2})$, we emphasize that the ends of the $(H,k)$-noids lie in the concave side of their asymptotic vertical $H$-cylinders, whereas the ends of the $(H,k)$-nodoids lie in their convex side. We will show that the $(H,k)$-noid $\overline\Sigma_{\infty,b}^*$ (resp.\ the $(H,k)$-nodoid $\overline\Sigma_{a,\infty}^*$) is embedded if $b$ (resp.\ $a$) is large enough, since the limit $\overline\Sigma_{\infty,\infty}^*$ is equal to an ideal Scherk $H$-graph. The limit of $\overline\Sigma_{a,b}^*$ when $a\to 0$ or $b\to 0$ is the union of $k$ asymptotic vertical $H$-cylinders that produce, after a suitable rescaling, a symmetric minimal $k$-noid in $\mathbb{R}^3$, see Remark~\ref{rmk:rescaling}. As a matter of fact, for small values of the parameters, $H$-catenoids and $H$-catenodoids desingularize two tangent vertical $H$-cylinders from the concave or the convex side,   respectively. In Section~\ref{sec:krust}, we will show that there is a unique $H$-catenoid or $H$-catenodoid for each prescribed signed distance (in the admissible range) between the asymptotic vertical planes. We will also find that $H$-catenoids are embedded but there are embedded and non-embedded $H$-catenodoids (see Figure~\ref{Fig-Catenoids}). As for the case $H=\frac{1}{2}$, the possible asymptotic horocylinders of $(\frac{1}{2},k)$-noids or $(\frac{1}{2},k)$-nodoids turn out to disappear at infinity, so the $\frac{1}{2}$-catenoids are properly embedded bigraphs, whereas $\frac{1}{2}$-catenodoids are not embedded. Either way, we will give explicitly the domain over which they project (see Figure~\ref{fig:critical-catenoid}). As a matter of fact, catenoids   and catenodoids have some similarities to some  rotationally invariant surfaces with mean curvature~$1$ in the hyperbolic space $\mathbb{H}^3$, see~\cite[Section~6]{UY}.

The first two authors~\cite{CM} have also generalized the symmetric examples in~\cite{MR} to produce minimal saddle towers and $k$-noids with genus one, $k\geq 3$. An interesting open problem is to explore the existence of positive genus non-minimal saddle towers, $(H,k)$-noids and $(H,k)$-nodoids. Notice that $H$-surfaces with finite positive genus in $\mathbb{H}^2\times\mathbb{R}$ with $0<H<\frac{1}{2}$ have not been constructed yet.

The paper is organized as follows. In section~\ref{sec:preliminaries}, we will discuss the role of vertical geodesics in conjugate constructions and prove that the conjugation is continuous with respect to the standard convergence of $H$-surfaces (see Proposition~\ref{prop:continuity}). This will help us understand the asymptotic behavior of the conjugate surfaces of minimal graphs over unbounded domains produced as limits of some Jenkins-Serrin problems over compact domains. Section~\ref{sec:JS} will be devoted to obtain the minimal surfaces in $\widetilde{\mathrm{SL}}_2(\mathbb{R})$ or $\mathrm{Nil}_3$ that will play the role of initial surfaces in our construction by conjugation. In the case of $\widetilde{\mathrm{SL}}_2(\mathbb{R})$, we need to extend some results by Younes~\cite{Y} and Melo~\cite{Me}. In the case of $\mathrm{Nil}_3$ we will obtain horizontal minimal helicoids (see Lemma~\ref{lemma:helicoids-nil}) that are foliated by straight lines and act as barriers to solve the Jenkins-Serrin problem over a strip (the uniqueness of such solutions is an open question). They extend the horizontal helicoids obtained by Daniel and Hauswirth~\cite{DH} (see also~\cite{Ple13}), since we can cover all speeds for both directions of rotation along the horizontal axis, which highlights again the role of orientation in our discussion. Lemma~\ref{lemma:helicoids-nil} also gives examples of entire minimal graphs in $\mathrm{Nil}_3$ foliated by straight lines. Finally, in Section~\ref{sec:constructions} we will undertake the construction of the conjugate surfaces $\overline\Sigma_{a,b}^*$ and complete the proof of Theorem~\ref{th3}. We will also discover that there are non-embedded saddle towers (in contrast to the minimal case) as a consequence of a thorough study of their angle function (see Remark~\ref{rmk:height} and Proposition~\ref{prop:slab}). We will also observe that embeddedness may fail either because the fundamental piece is not embedded (as in the case of some $(H,k)$-nodoids, see Proposition~\ref{prop:knodoids-embeddedness}) or because it does not lie in the appropriate slab or half-space and produces self-intersections after extending it across the boundary (as in the case of some saddle towers, see Proposition~\ref{prop:slab}). Finally, in Section~\ref{sec:krust} we will deal with the proof of Theorem~\ref{th1}.

We would like to remark that $(H,k)$-noids and $(H,k)$-nodoids are parabolic for all $H\in[0,\frac{1}{2})$ and all $k\geq 2$ (they are conformally the Riemann sphere minus $k$ points). Indeed, using~\cite{HMR1} we can prove that the Jenkins-Serrin minimal graphs of $\widetilde{\mathrm{SL}}_2(\mathbb{R})$ used in our construction have finite total curvature. Hence both $(H,k)$-noids and $(H,k)$-nodoids also have finite total curvature when $H<\frac 1 2$.  Daniel and Hauswirth~\cite{DH} proved that $\frac{1}{2}$-catenoids are parabolic. We expect the $(\frac{1}{2},k)$-noids and $(\frac{1}{2},k)$-nodoids to have finite total curvature (in particular they would be parabolic). Properly immersed minimal surfaces with  finite total curvature are characterized as those with finite topology whose ends are asymptotic to geodesic polygons at infinity~\cite{HMR1}. It is expected that a similar result holds for $H\in(0,\frac{1}{2}]$.

\medskip

\noindent\textbf{Acknowledgement.} This research is supported by
MCIN/AEI/10.13039/501100011033/ grants no. PID2020-117868GB-I00 (first
and third authors) and PID2019-111531GA-I00 (second author). The first
and third authors are also supported by FEDER/Andalucía grants no.\
A-FQM-139-UGR18 and P18-FR-4049 and by MINECO/FEDER grant no.\
MTM2017-89677-P. The second author is also supported by the Ramón y
Cajal grant no.\ RYC2019-027658-I.

\section{Preliminaries}\label{sec:preliminaries}

Given $\kappa,\tau\in\mathbb{R}$, the space $\mathbb{E}(\kappa,\tau)$ is the unique simply connected oriented 3-manifold admitting a Riemannian submersion with constant bundle curvature $\tau$ over $\mathbb{M}^2(\kappa)$, the simply-connected surface with constant curvature $\kappa$, whose fibers are the integral curves of a unit Killing vector field $\xi$. The bundle curvature can be characterized by the equation $\overline\nabla_X\xi=\tau X\times\xi$, where $\overline\nabla$ stands for the Levi-Civita connection and its sign depends on the orientation of $\mathbb{E}(\kappa,\tau)$ by means of the cross product $\times$, see~\cite{Dan,Man} and the references therein. In our constructions, we will employ the so-called cylinder model
\begin{equation}\label{eqn:model-Ekt}
\mathbb{E}(\kappa,\tau)=\left\{(x,y,z)\in\mathbb{R}^3:1+\tfrac{\kappa}{4}(x^2+y^2)>0\right\},
\end{equation}
endowed with the Riemannian metric
\[\mathrm{d} s^2=\lambda^2(\mathrm{d} x^2+\mathrm{d} y^2)+(\mathrm{d} z+\lambda\tau(y\mathrm{d} x-x\mathrm{d} y))^2,\]
where $\lambda=\left(1+\tfrac{\kappa}{4}(x^2+y^2)\right)^{-1}$. We also prescribe the orientation such that
\begin{align*}
 E_1&=\frac{\partial_x}{\lambda}-\tau y\,\partial_z,& 
 E_2&=\frac{\partial_y}{\lambda}+\tau x\,\partial_z& \mbox{and }& &
 E_3&=\partial_z
\end{align*}
define a global positively oriented orthonormal frame. The Killing submersion reads $\pi(x,y,z)=(x,y)$ over $\mathbb{M}^2(\kappa)$ identified with the disk of radius $\tfrac{2}{\sqrt{-\kappa}}$ and center the origin in the $(x,y)$-plane if $\kappa<0$ or with the whole $(x,y)$-plane if $\kappa\geq0$. If $\kappa>0$, then this model is locally isometric to a Berger sphere, but it is not complete. Note that the Riemannian metric $\lambda^2(\mathrm{d} x^2+\mathrm{d} y^2)$ has constant curvature $\kappa$ and the unit Killing vector field is given by $\xi=E_3$.

The bundle curvature can be interpreted geometrically in terms of holonomy of horizontal lifts of curves as follows. Assume that $\alpha$ is a piecewise-$\mathcal{C}^1$ Jordan curve in $\mathbb{M}^2(\kappa)$ that encloses a relatively compact region $U$ with the interior of $U$ to the left when traveling on $\alpha$. If we consider a horizontal lift $\overline\alpha$ in $\mathbb{E}(\kappa,\tau)$ such that $\pi\circ\overline\alpha=\alpha$, then the endpoints of $\overline\alpha$ can be joined by a vertical geodesic of length $2\tau\mathrm{Area}(U)$, see~\cite[Proposition~3.3]{Man}. This makes a big difference between the cases $\tau=0$ and $\tau\neq 0$ that we will exploit in Lemma~\ref{lem:slab}.

\subsection{Conjugation of $H$-surfaces}

Let $\Sigma$ be a simply connected oriented Riemannian surface and let $H\geq 0$. Daniel~\cite{Dan} and Hauswirth, Sa Earp and Toubiana~\cite{HST} discovered a Lawson-type isometric correspondence between minimal immersions $\phi:\Sigma\to\mathbb{E}(4H^2-1,H)$ and $H$-immersions $\phi^*:\Sigma\to\mathbb{H}^2\times\mathbb{R}=\mathbb{E}(-1,0)$. Consider the fundamental data $(\nu,T,A)$ of $\phi$ consisting of the angle function $\nu=\langle N,\xi\rangle$, the tangent component $T=\xi-\nu N$ of the unit Killing vector field $\xi$ and the shape operator $A$ associated to the unit normal $N$ used in the computation of the mean curvature. Likewise, let $(\nu^*,T^*,A^*)$ be the
fundamental data of $\phi^*$ associated to the unit normal $N^*$. The correspondence will be called \emph{conjugation} in the sequel and is characterized by the following three relations: 
\begin{align}
\nu^*&=\nu,&
T^*&=JT,&
A^*&=JA+H\,\mathrm{id},\label{eqn:daniel}
\end{align}
where $J$ stands for a $\frac\pi2$-rotation in the tangent bundle of $\Sigma$. More precisely, the orientation of $\Sigma$ and the orientations of the ambient spaces fulfill the compatibility relations $\mathrm{d}\phi_p(Jv)=N\times\mathrm{d}\phi_p(v)=N^*\times\mathrm{d}\phi^*_p(v)$ for all tangent vectors $v\in T_p\Sigma$. The immersions $\phi$ and $\phi^*$ determine each other up to isometries preserving the orientation of the ambient spaces and the orientation of vertical fibers. Throughout the text we will write $\Sigma$ and $\Sigma^*$ to refer to conjugate (immersed) surfaces, though it is more convenient to keep on using the notation of immersions $\phi$ and $\phi^*$ to prove the results in the present section.

Let $\Gamma\subset\Sigma$ be a regular curve. Then $\phi(\Gamma)$ is part of a horizontal (resp.\ vertical) geodesic of $\mathbb{E}(4H^2-1,H)$ if and only if the conjugate curve $\phi^*(\Gamma)$ lies in a vertical plane (resp.\ horizontal slice) $P\subset\mathbb{H}^2\times\mathbb{R}$ intersecting $\phi^*(\Sigma)$ orthogonally along $\phi^*(\Gamma)$. The axial symmetry about $\phi(\Gamma)$ is an isometry of $\mathbb{E}(4H^2-1,H)$, which corresponds to the mirror symmetry about the totally geodesic surface $P$, in which case one can extend analytically both immersions $\phi$ and $\phi^*$ across $\Gamma$ by means of Schwarz reflection principle. In particular, if $\Sigma$ is a surface with piecewise regular boundary such that $\phi$ maps $\partial\Sigma$ onto a geodesic polygon consisting of vertical and horizontal geodesics and the angles at the vertices are integer divisors of $\pi$, then no singularity appears at such vertices after successive reflections about the boundary components, whence $\phi$ (resp.\ $\phi^*$) can be extended by mirror symmetries to a complete minimal immersion (resp.\ $H$-immersion). We refer to~\cite{ManTor,Ple15} for details. Some of these properties are also similar to those of the Lawson duality between minimal surfaces in $\mathbb{R}^3$ and mean curvature $1$ surfaces in $\mathbb{H}^3$, which is not an $\mathbb{E}(\kappa,\tau)$-space (e.g., we refer to the work of Karcher~\cite{K2}).

If $\phi(\Gamma)$ is a horizontal geodesic, let us parametrize $\phi^*(\Gamma)$ with unit speed as $\gamma_*=(\beta,z)\in\mathbb{H}^2\times\mathbb{R}$. Since $\gamma_*$ is contained in a vertical plane orthogonal to $\Sigma$, then $\{\gamma_*',N\}$ is an orthonormal frame along $\gamma_*$ as a curve of that plane, so
\begin{equation}\label{eqn:horizontal-components}|z'|=|\langle\gamma_*',\partial_t\rangle|=\sqrt{1-\langle N,\partial_t\rangle^2}=\sqrt{1-\nu^2}\quad\text{and}\quad \|\beta'\|=\sqrt{1-(z')^2}=|\nu|.
\end{equation}
Therefore, the projections $\beta$ and $z$ might fail to be injective only at points where $\nu$ equals $0$ or $\pm 1$ (this motivates the analysis of the angle function in our construction given by Lemma~\ref{lem:slab}). 

Let us develop further the framework to deal with the case $\phi(\Gamma)$ is a vertical geodesic segment. For convenience, we will identify $\Gamma$ with a unit-speed parametrization $\Gamma:(a,b)\to\Sigma$ such that $\gamma=\phi\circ\Gamma$ satisfies $\gamma'=\xi$, and this orientation will be fixed henceforth. Using the orthonormal frame $\{E_1,E_2,E_3=\xi\}$, we can express
\begin{equation}\label{eqn:N-PQ}
  N_{\gamma(t)}=\cos(\theta(t)) E_1+\sin(\theta(t))E_2,
\end{equation} 
for some function $\theta\in\mathcal{C}^{\infty}(a,b)$ called the \emph{angle of rotation of $N$ along $\gamma$}. As $\phi^*(\Sigma)$ is orthogonal to the slice $\mathbb{H}^2\times\{t_0\}$ in which $\gamma_*=\phi^*\circ\Gamma$ is contained, we infer that the unit normal $N^*$ becomes a unit conormal to $\gamma_*$ in $\mathbb{H}^2\times\{t_0\}$. Since $[E_1,\xi]=0$, we get that $\overline\nabla_\xi E_1=\overline\nabla_{E_1}\xi=H E_1\times E_3=-H E_2$, and $\overline\nabla_\xi E_2=H E_1$ analogously. Taking derivatives in~\eqref{eqn:N-PQ} in the direction of $\gamma'=\xi$, we get to
\[
  \overline\nabla_{\gamma'}N=
  (H-\theta')(\sin(\theta)E_1-\cos(\theta)E_2)=(H-\theta')N\times\gamma'.
\]
Consequently,
\begin{align*}
  H-\theta'&=\langle
              \overline\nabla_{\gamma'}N,N\times\gamma'\rangle=
              -\langle A\alpha',J\alpha'\rangle=
              \langle JA^*\alpha',J\alpha'\rangle-H\langle J\alpha',J\alpha'\rangle\\
            &=\langle
              A^*\alpha',\alpha'\rangle-H=
              -\langle\overline\nabla_{\gamma_*'}N^*,\gamma_*'\rangle-H=\kappa_g-H,
\end{align*} 
where $\kappa_g$ stands for the geodesic curvature of $\gamma_*$ as a curve of $\mathbb{H}^2\times\{t_0\}$ with respect to the conormal $N^*$. This leads to the well-known formula (see~\cite{Ple13})
\begin{equation}\label{eqn:theta}
  \theta'=2H-\kappa_g.
\end{equation}
The direction of rotation of $N$ (i.e., the sign of~$\theta'$) is geometrically meaningful, being the essence of the later distinction between $(H,k)$-noids and $(H,k)$-nodoids.

We will obtain extra properties by assuming that the immersions are \emph{multigraphs}, i.e., their common angle function has no interior zeros. This means that the submersion $\pi$ restricted to the interior of the surface is a local diffeomorphism onto its image, which is a (possibly non-embedded) domain of the base surface.

\begin{lemma}\label{lemma:rotation-curvature}
Let $\phi:\Sigma\to\mathbb{E}(4H^2-1,H)$ and $\phi^*:\Sigma\to\mathbb{H} ^2\times\mathbb{R}$ be conjugate multigraphs over domains $\Omega\subset\mathbb{M}^2(4H^2-1)$ and $\Omega^*\subset\mathbb{H}^2$, respectively, with $0<H\leq\frac{1}{2}$. Assume that $\Gamma$ is a curve in $\partial\Sigma$ such that $\gamma=\phi\circ\Gamma$ satisfies $\gamma'=\xi$.
\begin{enumerate}[label=\emph{(\alph*)}]
  \item If the angle of rotation $\theta$ of $N$ along $\gamma$ is strictly increasing (resp.\ decreasing), then $N^*$ points to the interior (resp.\ exterior) of $\Omega^*$ along $\gamma_*=\phi^*\circ\Gamma$.

  \item If $\theta'>0$ and $\int_{\Gamma}\theta'\leq\pi$, then $\gamma_*$ is embedded.
\end{enumerate}
\end{lemma}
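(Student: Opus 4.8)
The plan is to analyze the geometry of the conjugate curve $\gamma_*$ in the slice $\mathbb{H}^2\times\{t_0\}$ using the relation between the angle of rotation $\theta$ of $N$ along $\gamma$ and the geometry of $\gamma_*$, together with the multigraph hypothesis. The key observation is that, because $\phi^*$ is a multigraph, its angle function $\nu$ has no interior zeros, and along the vertical geodesic $\gamma$ the normal $N$ lies entirely in the horizontal distribution (see~\eqref{eqn:N-PQ}), so $\nu=\langle N,\xi\rangle=0$ along $\gamma$. This means $\gamma_*$ is a horizontal curve contained in the slice, and $N^*$, being the conormal, is a horizontal unit vector field along $\gamma_*$ in $\mathbb{H}^2\times\{t_0\}$. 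The sign of $\theta'=2H-\kappa_g$ from~\eqref{eqn:theta} directly controls $\kappa_g$, the geodesic curvature of $\gamma_*$ with respect to $N^*$.

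For part (a), I would argue as follows. Since $\phi^*$ is a multigraph over $\Omega^*$, the projection $\pi$ is a local diffeomorphism on the interior, and the image domain $\Omega^*$ lies locally on one side of its boundary curve $\pi\circ\gamma_*=\gamma_*$ (as $\gamma_*$ is already horizontal, its projection is itself). The conormal $N^*$ either points into $\Omega^*$ or out of it. First I would relate the direction of $N^*$ to the sign of $\kappa_g$ via the local structure of the multigraph near the boundary. The connection between $\theta$ increasing and $N^*$ pointing inward should come from tracking the second-order behavior of the immersion: as one moves along $\gamma$, the rotation of $N$ encodes how the surface bends relative to the fiber direction, and through the conjugation relations~\eqref{eqn:daniel} this transfers to the bending of $\gamma_*$ inside the slice. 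The cleanest route is to observe that $\Sigma$ being a graph forces the surface to curve toward $\Omega^*$ in a way dictated by the sign of the shape operator contribution; I would make this precise by comparing $N^*$ against the direction in which the graph is defined, using that $\nu\neq 0$ in the interior determines on which side the graph lies.

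For part (b), assuming $\theta'>0$ and $\int_\Gamma\theta'\leq\pi$, I want to conclude that $\gamma_*$ is embedded. The strategy is to use the turning of the tangent direction of $\gamma_*$. Parametrizing $\gamma_*$ by arc length and letting $\psi$ denote the angle its unit tangent makes with a fixed parallel horizontal direction, the geodesic curvature is $\kappa_g=\psi'$ (up to the usual sign conventions in $\mathbb{H}^2$). From~\eqref{eqn:theta}, $\kappa_g=2H-\theta'$. The total turning of $\gamma_*$ is $\int_\Gamma\kappa_g=\int_\Gamma(2H-\theta')=2H\,\mathrm{length}(\Gamma)-\int_\Gamma\theta'$. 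Since $0<H\leq\frac12$ and $\theta'>0$, and given the hypothesis $\int_\Gamma\theta'\leq\pi$, I would bound the total change of the tangent angle and show it stays within a half-turn, so that $\gamma_*$ is a convex-type arc that cannot self-intersect. The embeddedness then follows from a standard argument: a plane curve whose tangent direction sweeps a total angle strictly less than $\pi$ (in an appropriate sense, accounting for the hyperbolic background) is a graph over a geodesic and hence simple.

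The main obstacle I anticipate is controlling the relationship in part (a) between the \emph{abstract} monotonicity of $\theta$ and the \emph{geometric} statement about which side $N^*$ points, since this requires carefully unwinding the conjugation relations and the multigraph condition at the same time, keeping track of orientation conventions (the choice of $J$, the fixed orientation $\gamma'=\xi$, and the sign of $\kappa_g$ relative to $N^*$). A secondary difficulty in part (b) is that the turning-angle argument takes place in $\mathbb{H}^2$ rather than the Euclidean plane, so I would either work in a half-plane or disk model where a clean comparison is available, or invoke that the relevant estimate $\int_\Gamma\theta'\leq\pi$ already suffices to prevent the conormal from completing a full rotation, which geometrically rules out self-intersections regardless of the ambient curvature. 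I expect the embeddedness conclusion to hinge on combining part (a) (so that $N^*$ consistently points to the interior) with the turning bound, rather than on a purely local curvature estimate.
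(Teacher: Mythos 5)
Your starting point --- the identity $\kappa_g=2H-\theta'$ from Equation~\eqref{eqn:theta} --- is the right one, but both halves of your plan have genuine gaps. In part (a), the inequality $\kappa_g<2H$ (when $\theta'>0$) cannot by itself decide which side $N^*$ points to: a geodesic arc, for instance, satisfies $\kappa_g=0<2H$ with respect to \emph{either} choice of conormal, so no amount of ``tracking second-order behavior of the immersion'' or ``comparing $N^*$ against the direction in which the graph is defined'' will settle the sign. The missing ingredient is a comparison with a model $H$-surface: if $N^*$ pointed to the exterior of $\Omega^*$, then the vertical $H$-cylinder tangent to $\phi^*(\Sigma)$ along $\gamma_*$ (the cylinder over the curve of geodesic curvature $2H$ with the same normal) would lie locally outside $\Omega^*\times\mathbb{R}$, because $2H>\kappa_g$ means it bends more towards $N^*$ than $\gamma_*$ does; since both the cylinder and $\phi^*(\Sigma)$ are $H$-surfaces tangent along $\gamma_*$, the boundary maximum principle gives a contradiction. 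This maximum-principle mechanism is the paper's entire proof of (a), and it is what converts the curvature inequality into a statement about sides.

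In part (b), the turning-angle bound you propose is false. The total turning of $\gamma_*$ is $\int_\Gamma\kappa_g=2H\,\mathrm{Length}(\Gamma)-\int_\Gamma\theta'$, and nothing in the hypotheses bounds $\mathrm{Length}(\Gamma)$, so this quantity can exceed $\pi$ by an arbitrary amount; the hypotheses only control the ``negative part'' of the turning, namely $\int_\Gamma(\theta'-2H)\leq\int_\Gamma\theta'\leq\pi$. Moreover, in $\mathbb{H}^2$ the angle of the tangent against a fixed direction is not globally well defined (parallel transport has holonomy proportional to area), which is precisely why the correct tool is Gauss--Bonnet rather than a Euclidean half-turn estimate. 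The paper argues as follows: if $\gamma_*$ were not embedded, the subarc between a first pair of self-intersection points bounds a disk $D\subset\mathbb{H}^2\times\{t_0\}$ with one corner of interior angle $\alpha\in(0,2\pi]$; by part (a), $N^*$ points toward $\Omega^*$ and hence to the exterior of $D$, so the geodesic curvature of $\partial D$ computed for Gauss--Bonnet is $-\kappa_g$, and one gets
\begin{equation*}
-\mathrm{Area}(D)=\pi+\alpha+\int_{\partial D}\kappa_g>\pi+2H\,\mathrm{Length}(\partial D)-\int_{\partial D}\theta'>\pi-\int_\Gamma\theta'\geq 0,
\end{equation*}
a contradiction. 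Note that the hyperbolic area term (with its favorable sign) is exactly what absorbs the potentially huge positive contribution $2H\,\mathrm{Length}$ that breaks your approach, and that the argument needs part (a) as input to know on which side of $\gamma_*$ the disk $D$ lies. Your closing instinct that (b) should combine part (a) with a rotation bound is correct, but the implementation must go through Gauss--Bonnet on the enclosed disk, not through a total-turning estimate.
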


\begin{proof}
The condition $\theta'>0$ (resp.\ $\theta'<0$) implies that $\kappa_g<2H$ (resp.\ $\kappa_g>2H$) in view of Equation~\eqref{eqn:theta}. Therefore, if $N^*$ points outside $\Omega^*$ (resp.\ inside $\Omega^*$) along $\gamma_*$, then the vertical $H$-cylinder sharing the same normal as $\gamma_*$ at any point of $\gamma_*$ lies locally outside $\Omega^*\times\mathbb{R}\subset\mathbb{H}^2\times\mathbb{R}$, which produces a contradiction to the boundary maximum principle for $H$-surfaces.

As for item (b), suppose that $\theta'>0$ and $\int_{\Gamma} \theta'\leq\pi$ but the curve $\gamma_*$ is not embedded. Starting at a non-self-intersection point of $\gamma_*$ and extending the curve on both sides until reaching the first self-intersection point, we obtain a subarc of $\gamma_*$ that encloses a topological disk $D\subset\mathbb{H}^2\times\{t_0\}$ with a vertex (the point of self-intersection) defining an interior angle $0<\alpha\leq 2\pi$. Since $\theta'>0$, we infer that $\kappa_g<2H$ from~\eqref{eqn:theta} and that $N^*$ points towards $\Omega^*$ along $\gamma_*$ by item (a), so $N^*$ points to the exterior of $D$, and the geodesic curvature of $\partial D$ is given by $-\kappa_g$. Gauss-Bonnet formula and~\eqref{eqn:theta} yield
\[
-\text{Area}(D)= \pi+\alpha+ \int_{\partial D}\kappa_g > \pi + 2H\, \text{Length}(\partial D) - \int_{\partial D}\theta' > \pi-\int_{\Gamma}\theta'.
\]
This contradicts the fact that the right-hand side is non-negative by hypothesis.
\end{proof}

The orientation is essential in Lemma~\ref{lemma:rotation-curvature} since we are assuming that $H>0$. In the case $H=0$ the conditions $\theta'>0$ and $\theta'<0$ are equivalent and the embeddedness of $\gamma_*$ in item (b) of Lemma~\ref{lemma:rotation-curvature} also holds true when $\theta'<0$ and $\int_{\Gamma}|\theta'|\leq\pi$.

\begin{remark}\label{rmk:complete-vertical}
If $\gamma$ is a complete vertical geodesic and $\Sigma$ is a graph, it follows that $\theta'$ tends to zero as one approaches the endpoints of $\gamma$. This means that $\gamma_*$ is asymptotic to curves of constant curvature $2H$ on both endpoints, whence it is a divergent curve on the horizontal slice in which it is contained.
\end{remark}

\begin{proposition}[continuity of the conjugation]\label{prop:continuity}
Let $\Sigma$ be a smooth surface and assume that $\phi_n:(\Sigma,\mathrm{d} s^2_n)\to\mathbb{E}(4H^2-1,H)$ is a sequence of minimal isometric immersions that converge on the $\mathcal{C}^m$-topology on compact subsets (for all $m\geq 0$) to a minimal isometric immersion $\phi_\infty:(\Sigma,\mathrm{d} s_\infty^2)\to\mathbb{E}(4H^2-1,H)$. Then the conjugate $H$-immersions $\phi_n^*$ (up to suitable isometries of $\mathbb{H}^2\times\mathbb{R}$) converge in the same mode to the conjugate $H$-immersion $\phi_\infty^*$.
\end{proposition}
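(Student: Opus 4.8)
The strategy is to recover each conjugate immersion $\phi_n^*$ from its fundamental data through Daniel's fundamental theorem for isometric immersions into $\mathbb{E}(\kappa,\tau)$-spaces \cite{Dan}, and then to exploit the continuous dependence of this recovery on that data. Since the assertion concerns compact subsets, it suffices to argue on an exhaustion of $\Sigma$ by compact simply connected sets, on each of which the conjugate immersion is well defined.

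First I would show that the fundamental data converge. As $\phi_n\to\phi_\infty$ in the $\mathcal{C}^m$-topology on compact subsets for every $m\geq 0$, the induced metrics $\mathrm{d} s_n^2$, the unit normals $N_n$ (built from a tangent frame, hence from first derivatives of $\phi_n$), the angle functions $\nu_n=\langle N_n,\xi\rangle$, the tangent parts $T_n=\xi-\nu_n N_n$ and the shape operators $A_n$ (built from second derivatives of $\phi_n$) all converge in the same mode to the corresponding objects of $\phi_\infty$. The same holds for the $\tfrac\pi2$-rotation $J_n$, which is determined pointwise by $\mathrm{d} s_n^2$ and the fixed orientation of $\Sigma$. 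By the conjugation relations~\eqref{eqn:daniel}, the fundamental data $(\nu_n^*,T_n^*,A_n^*)=(\nu_n,J_nT_n,J_nA_n+H\,\mathrm{id})$ of $\phi_n^*$ then converge in the $\mathcal{C}^m$-topology on compact subsets, for every $m$, to $(\nu_\infty^*,T_\infty^*,A_\infty^*)$. Moreover, each $\phi_n^*$ is an isometric immersion of $(\Sigma,\mathrm{d} s_n^2)$, since conjugation preserves the induced metric.

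Next I would set up the recovery. Fix a base point $p_0\in\Sigma$. On a compact simply connected piece $K\ni p_0$, the fundamental theorem produces the isometric immersion with prescribed data as the solution of a first-order linear Cauchy problem for an adapted frame field $F_n=(\phi_n^*;e_1^n,e_2^n,N_n^*)$, of the form $\mathrm{d} F_n=F_n\,\omega_n$, where the connection form $\omega_n$ is an algebraic expression in $\mathrm{d} s_n^2$, its Christoffel symbols, the data $(\nu_n^*,T_n^*,A_n^*)$ and the (fixed) ambient data of $\mathbb{H}^2\times\mathbb{R}$. Each $\phi_n^*$ being a genuine immersion, the integrability (zero-curvature) conditions are automatically satisfied for every $n$, so no extra argument is needed there. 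By the previous step, $\omega_n\to\omega_\infty$ in the $\mathcal{C}^m$-topology on $K$ for every $m$.

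Finally, I would fix the gauge freedom and pass to the limit. The immersion is determined by its data only up to an isometry of $\mathbb{H}^2\times\mathbb{R}$ preserving orientation and the orientation of the fibers; choosing such isometries $\Psi_n$ so that the normalized frames $\Psi_n\circ F_n$ all agree with $F_\infty$ at $p_0$, the continuous dependence of solutions of linear systems on their coefficients and initial conditions gives $\Psi_n\circ F_n\to F_\infty$ in $\mathcal{C}^m(K)$ for every $m$, and in particular $\Psi_n\circ\phi_n^*\to\phi_\infty^*$. Exhausting $\Sigma$ by such pieces yields the claim. The main obstacle is precisely this last step: organizing the recovery as a single Cauchy problem with continuously varying coefficients and choosing the ambient isometries consistently for all $n$, so that the limit of the normalized immersions is exactly $\phi_\infty^*$ rather than merely some immersion sharing its fundamental data.
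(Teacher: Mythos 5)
Your route is genuinely different from the paper's. You propose to reconstruct each $\phi_n^*$ by integrating the frame system $\mathrm{d}F_n=F_n\,\omega_n$ attached to the conjugate data $(\nu_n,J T_n,JA_n+H\,\mathrm{id})$ and to conclude by continuous dependence on coefficients and initial data; the paper never integrates anything: it normalizes at one point by ambient isometries, uses the uniform bound on $A_n^*$ together with the bounded geometry of $\mathbb{H}^2\times\mathbb{R}$ to extract a $\mathcal{C}^m$-convergent subsequence, identifies the limit with $\phi_\infty^*$ via the uniqueness part of Daniel's correspondence, and finishes by analytic continuation plus a subsequence argument. Your first step (convergence of the conjugate fundamental data via~\eqref{eqn:daniel}) is the same as the paper's, and your remark that integrability is automatic because the $\phi_n^*$ already exist is correct. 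If completed, your scheme is more constructive, avoids the compactness theorem and the subsequence trick, and does not need the paper's separate case $\nu_\infty\equiv\pm1$ (the paper only needs it because its normalization uses a point where $T^*_\infty\neq 0$). One caveat: the system is linear only after identifying adapted frames with elements of the isometry group, which is legitimate since $\mathbb{H}^2\times\mathbb{R}$ is homogeneous; in bare coordinates the Gauss--Weingarten system is not linear, though smooth dependence holds either way.

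The genuine gap is in the gauge fixing, which is exactly where the paper spends its effort. You claim the isometries $\Psi_n$ can be chosen so that the normalized frames \emph{agree} with $F_\infty$ at $p_0$. This is impossible in general: every isometry of $\mathbb{H}^2\times\mathbb{R}$ preserving the orientation and the orientation of the fibers preserves the vertical field $\xi$, so the quantity $\langle N_n^*,\xi\rangle=\nu_n(p_0)$ is invariant, and generically $\nu_n(p_0)\neq\nu_\infty(p_0)$ --- the angle functions only converge. Equivalently, the stabilizer of a point is just the circle of rotations about the vertical fiber, far too small to act transitively on adapted frames. What can be achieved, and what the paper's isometries $R_n$ do achieve, is that the frames \emph{converge}: translate $\phi_n^*(p_0)$ to $\phi_\infty^*(p_0)$ and pick rotations about the vertical axis using the convergence of the Gram matrices, of the vertical components $\langle\mathrm{d}\phi_n^*(e_i),\xi\rangle=\langle e_i,T_n^*\rangle$, and of the orientations; this existence requires the short argument the paper gives. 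With ``agree'' replaced by ``converge'' your final step survives unchanged, since continuous dependence tolerates converging initial conditions; but as written, the normalization you invoke cannot be realized.
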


\begin{proof}
The fundamental data $(\nu_n,T_n,A_n)$ and the induced metric $\mathrm{d} s_n^2$ are defined in terms of the derivatives of $\phi_n$, and thus converge uniformly on compact subsets of $\Sigma$ to the fundamental data $(\nu_\infty,T_\infty,A_\infty)$ and the metric of $\phi_\infty$. To show that the conjugate surfaces converge, we  first need to adapt them using ambient isometries.

If $\nu_\infty$ is constant $\pm1$, then $\phi_\infty$ is a horizontal slice (in particular $H=0$) and the functions $\nu_n$ converge uniformly to $\pm1$ on compact subsets, which implies that $\phi_\infty^*$ is also a horizontal slice, and the statement follows straightforwardly. Hence, we will assume there is some $x\in\Sigma$ such that $\nu_\infty(x)\neq\pm 1$ and consider a fixed positively oriented basis $\{e_1,e_2\}$ of the tangent plane $T_x\Sigma$ such that $e_1$ (resp.\ $e_2$) is collinear to $T_\infty^*$ (resp.\ $JT^*_\infty$) at $x$. (Note that $T^*_\infty$ and $JT^*_\infty$ do not vanish at $x$ because $\nu_\infty(x)\neq\pm 1$.) Define $q_n=\phi_n^*(x)$, $u_n=(\mathrm{d}\phi_n^*)_x(e_1)$ and $v_n=(\mathrm{d}\phi_n^*)_x(e_2)$ for $n\in\mathbb{N}\cup\{\infty\}$. Since $(T_n)_x\to (T_\infty)_x$ as   $n\to\infty$, we can find isometries $R_n$ of $\mathbb{H}^2\times\mathbb{R}$ preserving both the orientation and  the orientation of the vertical fibers such that $R_n(q_n)=q_\infty$, $(\mathrm{d} R_n)_{q_n}(u_n)\to u_\infty$ and $(\mathrm{d} R_n)_{q_n}(v_n)\to v_\infty$ as $n\to\infty$, i.e., each $R_n$ is a composition of a translation (taking $q_n$ to $q_\infty$) and a   suitable rotation about the vertical axis containing $q_\infty$. Since $\{e_1,e_2\}$ is positively oriented and each $R_n$ is orientation-preserving, also the unit normals satisfy $(\mathrm{d} R_n)_x(N_n^*)\to N_\infty$ as $n\to\infty$. Daniel correspondence is defined up to isometries, so we can substitute $\phi_n^*$ with $R_n\circ\phi_n^*$ and assume henceforth that
\begin{equation}\label{prop:continuity:eqn1}
  \phi_n^*(x)=\phi_\infty^*(x),\qquad
  \lim_{n\to\infty}(\mathrm{d}\phi^*_n)_x= (\mathrm{d}\phi^*_\infty)_x\quad\mbox{and}\quad
  \lim_{n\to\infty}(N^*_n)_x= (N^*_\infty)_x.
\end{equation}

The fundamental data $(\nu_n,JT_n,JA_n+H\,\mathrm{id})$ of $\phi_n^*$ given by~\eqref{eqn:daniel} also converge uniformly on compact subsets of $\Sigma$ to $(\nu_\infty,JT_\infty,JA_\infty+H\,\mathrm{id})$, the fundamental data of~$\phi_\infty^*$. Moreover, given a precompact open subset $U\subset\Sigma$, the uniform convergence of~$A_n^*$ on $U$ implies that the immersions $\phi_n^*$ have uniformly bounded second fundamental form on $U$. If $U$ contains $x$, then all the $\phi_n^*$ have $p_\infty$ as accumulation point, so standard convergence arguments yield the existence of a subsequence of $\phi_n^*$ converging in the $\mathcal{C}^m$-topology for all $m\geq 0$ to some $H$-immersion $\phi^*:U\to\mathbb{H}^2\times\mathbb{R}$ (observe that $\mathbb{H}^2\times\mathbb{R}$ has bounded geometry). Since $\phi^*$ has the same fundamental data as $\phi_\infty^*$, they differ in an ambient isometry because of the uniqueness of Daniel correspondence. In view of the relations given by~\eqref{prop:continuity:eqn1}, we have that $\phi^*(x)=\phi^*_\infty(x)$ and $(\mathrm{d}\phi^*)_x=(\mathrm{d}\phi^*_\infty)_x$, plus $\phi^*$ and $\phi^*_\infty$ have the same unit normal at $x$, so the aforesaid ambient isometry must be the identity, whence $\phi^*=\phi_\infty^*$ on $U$. By means of analytic continuation, we get a subsequence converging to $\phi_\infty^*$ on the whole $\Sigma$.

The above argument implies that any subsequence of the original sequence $\phi_n^*$ has a further subsequence that converges to $\phi^*$ on $\Sigma$. This easily leads to the convergence of $\phi_n^*$ itself.
\end{proof}

\subsection{Minimal graphs}\label{sec:minimal-graphs}

A (vertical) graph in $\mathbb{E}(\kappa,\tau)$ is a section of the submersion $\pi:\mathbb{E}(\kappa,\tau)\to\mathbb{M}^2(\kappa)$ defined over some domain $\Omega\subset\mathbb{M}^2(\kappa)$. The graph is said entire when $\Omega=\mathbb{M}^2(\kappa)$. The mean curvature of the graph of a function $u\in\mathcal{C}^2(\Omega)$ over a zero section $F_0:\Omega\to\mathbb{E}(\kappa,\tau)$ can be expressed in divergence form as 
\[2H=\mathrm{div}\frac{Gu}{\sqrt{1+\|Gu\|^2}},\]
where the divergence and norm are computed with respect to the metric of $\mathbb{M}^2(\kappa)$ and $Gu$ is a vector field in $\mathbb{M}^2(\kappa)$ called the generalized gradient, see~\cite{ManN}. In the cylinder model given by~\eqref{eqn:model-Ekt}, we take as zero section $F_0(x,y)=(x,y,0)$, so the graph is parametrized in terms of $u$ as
\begin{equation}\label{eqn:graph-parametrization}
F_u(x,y)=(x,y,u(x,y)),\qquad (x,y)\in\Omega
\end{equation}
and hence $Gu=(u_x+\tau y\lambda^{-1})\partial_x+(u_y-\tau
x\lambda^{-1})\partial_y$.

The following two entire minimal graphs will play the role of barriers in the construction considered in the next section: 
\begin{itemize}
  \item Given $p\in\mathbb{E}(\kappa,\tau)$, the \emph{umbrella} $\mathcal{U}_p$ centered at $p$ is a complete minimal surface consisting of all horizontal geodesics through $p$. If $\kappa\leq 0$, then $\mathcal{U}_p$ is an entire rotationally invariant minimal graph whose only point with angle function equal to $1$ is $p$. In the cylinder model, the umbrella $\mathcal{U}_{0}$ centered at the origin is given by the graph of the   zero function $u(x,y)=0$.

  \item Given a horizontal geodesic $\Gamma\subset\mathbb{E}(\kappa,\tau)$, there is a complete minimal surface $\mathcal{I}$ that consists of all horizontal geodesics orthogonal to $\Gamma$. It is invariant under isometric translations along $\Gamma$ and we will call it \emph{invariant surface}. If $\kappa\leq 0$, then $\mathcal{I}$ is an entire minimal graph whose angle function equals $1$ only along $\Gamma$. In the cylinder model, if we take $\Gamma$ as the $x$-axis, then $\mathcal{I}$ is the graph of 
  \[u(x,y)=\begin{cases}
  \tau xy&\text{if }\kappa=0,\\
  \frac{2\tau}{\kappa}\arctan\frac{2xy}{\frac{4}{\kappa}+x^2-y^2}&\text{if }\kappa<0.
  \end{cases}\]
\end{itemize}

Now we are going to describe the Jenkins-Serrin minimal graphs used in our construction. Let $\Omega\subset\mathbb{M}^2(\kappa)$ be a convex polygon whose sides are geodesic arcs $A_1,B_1,\ldots,A_k,B_k$ with vertices at some points of $\mathbb{M}^2(\kappa)$ or in the ideal boundary $\partial_\infty\mathbb{H}^2(\kappa)$ if $\kappa<0$, such that no two of the $A_i$ and no two of the $B_i$ have a common endpoint. We mark each $A_i$ with the value $+\infty$ and each $B_i$ with the value $-\infty$. We will consider the Jenkins-Serrin problem of finding a minimal graph $\Sigma\subset\mathbb{E}(\kappa,\tau)$ over $\Omega$, in some particular symmetric cases, with these prescribed boundary values. The boundary of~$\Sigma$ consists of the vertical geodesics projecting onto the interior vertices of $\Omega$, if any; and the asymptotic boundary $\partial_\infty\Sigma$ of~$\Sigma$ consists of the \emph{ideal vertical geodesics} projecting onto the ideal vertices of $\Omega$, if any (in the case $\kappa<0$), and the \emph{ideal horizontal geodesics} projecting onto the arcs $A_i$ and $B_i$.

The solution~$\Sigma$ is obtained as a limit of graphs defined on bounded polygonal domains with constant prescribed boundary data. The infinite boundary values are truncated by some increasing bounded constants and the domain $\Omega$, in the case it is unbounded, is obtained as a limit of an increasing sequence of bounded geodesic
polygons. Hence, Proposition~\ref{prop:continuity} allows us to describe the conjugate $H$-surface of~$\Sigma$:

\begin{corollary}[Conjugation of Jenkins-Serrin graphs]\label{coro:JS}
Let $\Sigma\subset\mathbb{E}(4H^2-1,H)$ be the solution to the Jenkins-Serrin problem described above. The conjugate $H$-surface $\Sigma^*\subset\mathbb{H}^2\times\mathbb{R}$ is a (possibly non-embedded) multigraph.
\begin{enumerate}[label=\emph{(\alph*)}]
  \item Ideal vertical geodesics in $\partial_\infty\Sigma$, if any, become ideal horizontal curves in $\partial_\infty\Sigma^*$ of constant curvature $\pm 2H$ at height $\pm\infty$.

  \item Ideal horizontal geodesics in $\partial_\infty\Sigma$ become ideal vertical geodesics of $\partial_\infty\Sigma^*$.
\end{enumerate}
\end{corollary}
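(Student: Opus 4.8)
The plan is to deduce the three assertions from the fundamental-data relations~\eqref{eqn:daniel}, the continuity of the conjugation (Proposition~\ref{prop:continuity}), and the explicit conjugate of a vertical plane over a geodesic. The \emph{multigraph property} is the easy part and needs no approximation: since $\Sigma$ is a vertical graph its tangent planes are never vertical, so its angle function $\nu$ has no interior zeros, and by $\nu^*=\nu$ in~\eqref{eqn:daniel} the angle function of $\Sigma^*$ has no interior zeros either, which is exactly the definition of a (possibly non-embedded) multigraph.

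\emph{Part (b).} I would analyze $\Sigma^*$ near each ideal horizontal geodesic separately, say the one over an edge $A_i$ carrying the value $+\infty$ (the $B_i$, $-\infty$ case being symmetric up to orientation). A standard feature of Jenkins--Serrin graphs is that, translating vertically by $-t\,\xi$, the surface $\Sigma-t\,\xi$ converges as $t\to+\infty$, on compact subsets of the interior side of $A_i$, to the vertical plane $V_i=\pi^{-1}(A_i)$ over the geodesic $A_i\subset\mathbb{M}^2(4H^2-1)$. Vertical translations are ambient isometries preserving fibers and orientation, hence they commute with the conjugation; thus the conjugate of $\Sigma-t\,\xi$ is $\Sigma^*-t\,\xi$, and Proposition~\ref{prop:continuity} gives $\Sigma^*-t\,\xi\to V_i^*$. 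Now $V_i$ is a flat minimal surface, and along each fiber $\gamma'=\xi$ its normal is the horizontal lift of the fixed base normal of $A_i$, so in~\eqref{eqn:N-PQ} the angle $\theta$ is constant; by~\eqref{eqn:theta} the conjugate curve has $\kappa_g=2H$, whence $V_i^*$ is the vertical $H$-cylinder $C_i=\beta_i\times\mathbb{R}$ over a curve $\beta_i\subset\mathbb{H}^2$ of geodesic curvature $2H$. Since the Daniel correspondence is an isometry $V_i\to C_i$ which, by the geodesic correspondence of Section~\ref{sec:preliminaries}, sends fibers of $V_i$ to slice curves and horizontal geodesics of $V_i$ to fibers of $C_i$, it interchanges the height-ideal boundary $\{z=+\infty\}$ of the flat strip $V_i$ with the lateral-ideal boundary of $C_i$. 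Therefore the ideal horizontal geodesic over $A_i$ is sent to the ideal vertical geodesic of $C_i$ sitting over an ideal endpoint of $\beta_i$, which is assertion~(b).

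\emph{Part (a).} Here the roles of vertical and horizontal are exchanged. An ideal vertical geodesic of $\partial_\infty\Sigma$ is the common fiber $\pi^{-1}(p_\infty)$ over an ideal vertex $p_\infty$ where an edge $A_i$ (value $+\infty$) meets an edge $B_j$ (value $-\infty$); it is shared by the two asymptotic vertical planes $V_i$ and $V_j$ produced as in part~(b). Applying the same flat conjugation $V_i\to C_i$ and $V_j\to C_j$, but now to the fiber over $p_\infty$ --- which lies at the \emph{lateral} ideal boundary of $V_i$ and of $V_j$ --- the interchange of the two boundary families carries it to the \emph{height}-ideal boundary, i.e.\ to the horizontal curve $\beta_i\times\{+\infty\}$ coming from the $A_i$-side and to $\beta_j\times\{-\infty\}$ coming from the $B_j$-side. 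These have constant geodesic curvature $+2H$ at height $+\infty$ and $-2H$ at height $-\infty$ respectively (the sign reflecting the opposite conormal orientations), exactly as claimed in~(a); the value $2H$ can alternatively be read off from~\eqref{eqn:theta} together with Remark~\ref{rmk:complete-vertical}.

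\emph{Main obstacle.} The delicate point is that Proposition~\ref{prop:continuity} only yields $\mathcal{C}^m$-convergence on compact subsets of the interior, whereas (a) and (b) are statements at the ideal boundary. One must therefore upgrade the interior convergence to genuine asymptotic control, showing that every divergent sequence $p_n\in\Sigma$ with $\phi(p_n)$ converging to a prescribed component of $\partial_\infty\Sigma$ has $\phi^*(p_n)$ converging to the announced component of $\partial_\infty\Sigma^*$, and that no spurious accumulation appears. I expect this to require comparing $\Sigma^*$ with the asymptotic $H$-cylinders $C_i$ on suitable half-spaces or slabs via the boundary maximum principle (in the spirit of Lemma~\ref{lemma:rotation-curvature}), together with the monotonicity of the angle of rotation $\theta$ along the fibers to locate the limiting heights and to glue the pieces coherently at the ideal vertices.
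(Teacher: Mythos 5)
Your multigraph step is fine and coincides with the paper's (conjugation preserves the angle function), but the rest of the argument has two genuine problems. First, the pivotal step of your part (b) misuses Proposition~\ref{prop:continuity}: that proposition only yields convergence of the conjugate immersions \emph{up to suitable ambient isometries}, whereas you fix the normalization yourself (``the conjugate of $\Sigma-t\,\xi$ is $\Sigma^*-t\,\xi$'') and then assert that the convergence holds with that normalization. It does not: the region of $\Sigma-t\,\xi$ that stays at bounded height (the part of $\Sigma$ at height $\approx t$ over $A_i$) corresponds, under the intrinsic isometry $\Sigma\cong\Sigma^*$, to a region of $\Sigma^*$ lying far out near the lateral ideal boundary $\partial_\infty\mathbb{H}^2\times\mathbb{R}$; hence $\Sigma^*-t\,\xi$ eventually leaves every compact set rather than converging to the cylinder $V_i^*$. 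What is true is that $R_t(\Sigma^*)$ converges to a band of an $H$-cylinder for suitable \emph{divergent horizontal} isometries $R_t$, and identifying how those $R_t$ sit in the compactification is precisely the content of the corollary --- so the argument is circular exactly where it matters. Your ``main obstacle'' paragraph senses a gap, but the issue is not that the convergence is unproven at the ideal boundary; it is that the convergence you invoke, with your chosen normalization, is false.

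Second, the conclusion you draw in part (a) is incorrect. You claim that the fiber over an ideal vertex $p_\infty$ produces \emph{two} ideal horizontal curves, $\beta_i\times\{+\infty\}$ from the $A_i$-side and $\beta_j\times\{-\infty\}$ from the $B_j$-side. In fact each ideal vertical geodesic conjugates to exactly \emph{one} ideal horizontal curve, at one height, and both the height ($+\infty$ or $-\infty$) and the sign of the curvature ($+2H$ or $-2H$) are governed by the direction of rotation $\theta'$ of the normal along the approximating finite fibers via~\eqref{eqn:theta}; this orientation dependence is the phenomenon the paper insists on (items (1)--(2) at the beginning of Section~\ref{sec:constructions}). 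A concrete contradiction: for the $(H,k)$-noids ($a=\infty$) one has $\Sigma^*_{\infty,b}\subset\mathbb{H}^2\times(-\infty,0)$ by the maximum principle with horizontal slices, so $\partial_\infty\Sigma^*_{\infty,b}$ contains nothing at height $+\infty$, while your mechanism would also produce curves there. The paper's route avoids both difficulties: $\Sigma$ is constructed as a limit of minimal graphs over \emph{bounded} polygonal domains with truncated (finite) boundary values, whose boundaries are compact geodesic polygons of horizontal and vertical geodesics; their conjugates are bounded by symmetry curves, Proposition~\ref{prop:continuity} (normalized at a fixed interior point) gives convergence of the conjugates, and (a), (b) are then read off from the limits of those symmetry curves: along the vertical geodesics over vertices that become ideal one has $\theta'\to 0$ uniformly, so $\kappa_g\to\pm 2H$ by~\eqref{eqn:theta} while the corresponding heights diverge to $\pm\infty$ (the gap $\ell$ between consecutive symmetry slices tends to $\infty$), and the conjugates of the truncated horizontal geodesics lie in vertical planes escaping to the lateral ideal boundary, which yields the ideal vertical geodesics of (b).
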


We remark that if $\Sigma$ is a complete graph, then $\Sigma^*$ is a
complete multigraph, and this implies that $\Sigma^*$ is also a graph,
see~\cite[Theorem~1]{ManR}.

\section{On the solution of the Jenkins-Serrin problem}
\label{sec:JS}
Assume that $\kappa\leq 0$ and consider the cylinder model for $\mathbb{E}(\kappa,\tau)$. Given $a,b\in(0,\infty)$ and an integer $k\geq 2$, we define $T_{a,b}\subset\mathbb{M}^2(\kappa)$ as a geodesic triangle of vertices $p_0=(0,0)$, $p_1$ and $p_2$ labeled counterclockwise such that the geodesic segments $\overline{p_0p_1}$ and $\overline{p_0p_2}$ have lengths $a$ and $b$, respectively, and the angle at $p_0$ is equal to $\frac{\pi}{k}$. Up to an isometry fixing the orientation, we can assume that $p_1$ lies on the $x$-axis. The law of cosines gives the length $\ell$ of the third side $\overline{p_1p_2}$:
\begin{equation}\label{eqn:ell}
  \begin{aligned}
    \cosh(\ell\delta)&
    =\cosh(a\delta)\cosh(b\delta)\!-\!\sinh(a\delta)\sinh(b\delta)\cos(\tfrac\pi
    k)& \text{if }\kappa<0,\\
    \ell^2&=a^2+b^2-2ab\cos(\tfrac{\pi}{k})&\text{if }\kappa=0,
  \end{aligned}
\end{equation}
where $\delta=\sqrt{-\kappa}$.

We will also consider the limit cases $T_{a,\infty}=\overline{\cup_{b>0}T_{a,b}}$ and $T_{\infty,b}=\overline{\cup_{a>0}T_{a,b}}$, which are triangles with one ideal vertex if $\kappa<0$ or truncated strips if $\kappa=0$. Note that the point $p_1$ (resp.\ $p_2$) and the segments $\overline{p_ip_1}$ (resp.\ $\overline{p_ip_2}$) should be understood as an ideal point and a geodesic with infinite length in the boundary of $T_{a,b}$ if $a=\infty$ (resp.\ $b=\infty$). After successive axial symmetries about $\overline{p_0p_1}$ and $\overline{p_0p_2}$, we get a polygonal domain $\Omega_{a,b}\subset\mathbb{M}^2(\kappa)$ consisting of $2k$ copies of $T_{a,b}$ whose vertices will be labeled as $p_1,p_2,\ldots,p_{2k}$ counterclockwise. We can lift these points by means of the zero section as $q_i=F_0(p_i)=(p_i,0)\in\mathbb{E}(\kappa,\tau)$ for all $i\in\{0,\ldots,2k\}$.

We are interested in the Jenkins-Serrin problem in $\mathbb{E}(\kappa,\tau)$ over $\Omega_{a,b}\subset\mathbb{M}^2(\kappa)$ with prescribed boundary values $+\infty$ on $\overline{p_{2i-1}p_{2i}}$ and $-\infty$ on $\overline{p_{2i}p_{2i+1}}$, for any $i$. The desired solution containing $q_0$, denoted by $\Sigma_{a,b}$, will be given by Lemmas~\ref{lemma:JS-psl} and~\ref{lemma:JS-nil} below.

\begin{remark}\label{rmk:minimal-case}
Note that $\Sigma_{a,b}$ and $\Sigma_{b,a}$ are not congruent if $a\neq b$ due to the lack of orientation-reversing isometries in $\mathbb{E}(\kappa,\tau)$ if $\tau\neq 0$, see~\cite[Corollary~2.11]{Man},  though both surfaces project onto congruent domains. This implies that the surfaces $\Sigma_{a,b}$ will not be congruent for different values of $a,b$. Indeed, the surface $\Sigma_{b,a}$ is recovered by substituting the value $+\infty$ with $-\infty$ in our Jenkins-Serrin problem over $T_{a,b}$. On the other hand, if $\tau=0$, the surfaces $\Sigma_{a,b}$ and $\Sigma_{b,a}$ (constructed likewise, see~\cite{MR,P}) are actually congruent by means of a mirror symmetry. This is the grounds for many dissimilarities between our $H$-surfaces and the minimal ones in~\cite{MR,P}, as we shall discuss later on.
\end{remark}

\subsection{The case of $\widetilde{\mathrm{SL}}_2(\mathbb{R})$}

We begin by adapting results in the literature to solve this problem for $\kappa<0$, also including the limit problem over $T_{\infty,\infty}=\overline{\cup_{a,b>0}T_{a,b}}$.

\begin{lemma}\label{lemma:JS-psl}  
Let $\kappa<0$ and $\tau\neq 0$. Given $k\geq 2$ and $a,b\in(0,\infty]$, the Jenkins-Serrin problem in $\mathbb{E}(\kappa,\tau)$ over $\Omega_{a,b}\subset\mathbb{H}^2(\kappa)$ with boundary values $+\infty$ on $\overline{p_{2i-1}p_{2i}}$ and $-\infty$ on $\overline{p_{2i}p_{2i+1}}$ for all $i$, has a unique solution $\Sigma_{a,b}$ up to vertical translations.
\end{lemma}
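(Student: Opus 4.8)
The plan is to realize $\Sigma_{a,b}$ as the solution of the indicated Jenkins–Serrin problem by invoking the theory developed for $\widetilde{\mathrm{SL}}_2(\mathbb{R})=\mathbb{E}(\kappa,\tau)$ by Younes~\cite{Y} and Melo~\cite{Me}, suitably extended to the ideal polygons $\Omega_{\infty,b}$, $\Omega_{a,\infty}$ and $\Omega_{\infty,\infty}$ that appear when $a$ or $b$ equals $\infty$. Recall that such a solution is produced as a monotone limit of minimal graphs over bounded subpolygons with truncated constant boundary data, controlled from above and below by the barriers $\mathcal{U}_p$ and $\mathcal{I}$ of Section~\ref{sec:minimal-graphs}, and that existence and uniqueness reduce to an admissibility condition on the marked polygon expressed through the flux of the divergence-free field $Gu/\sqrt{1+\|Gu\|^2}$. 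Over each geodesic edge carrying the value $+\infty$ (resp.\ $-\infty$) this flux equals $+$ (resp.\ $-$) its length, while over any interior chord it is strictly dominated by the chord's length; in $\mathbb{E}(\kappa,\tau)$ the flux carries in addition the holonomy term governed by $2\tau\,\mathrm{Area}$ from~\cite[Proposition~3.3]{Man}. Thus the whole problem reduces to checking the admissibility hypotheses of the Younes–Melo theorem for $\Omega_{a,b}$.

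First I would record the geometry of $\Omega_{a,b}$. Since it is obtained from $T_{a,b}$ by $2k$ successive axial symmetries, all $2k$ boundary edges are congruent copies of $\overline{p_1p_2}$ and share the common length $\ell$ of~\eqref{eqn:ell}; they alternate $+\infty$/$-\infty$, with exactly $k$ edges of each type. Hence the total $+\infty$-length equals the total $-\infty$-length, giving the balancing condition $\alpha(\Omega_{a,b})=\beta(\Omega_{a,b})=k\ell$ that is necessary for existence; the holonomy contribution to the flux is likewise symmetric and does not disturb this balance. Moreover $\Omega_{a,b}$ is invariant under the dihedral group generated by the rotation of angle $\frac{2\pi}{k}$ about $p_0$, which preserves each mark, and the reflection across $\overline{p_0p_1}$, which swaps the two marks. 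As the admissibility inequalities are conditions on the base domain and its marking, this reflection reduces the two required families of strict inequalities to the single family $2\alpha(\mathcal{P})<L(\mathcal{P})$ for every proper inscribed polygon $\mathcal{P}\subsetneq\Omega_{a,b}$.

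The core of the argument is to verify these strict inequalities. Writing $\alpha(\mathcal{P})=m\ell$ and $L(\mathcal{P})=m\ell+n\ell+\sigma$, where $m$ and $n$ count the $+\infty$ and $-\infty$ edges of $\Omega_{a,b}$ retained by $\mathcal{P}$ and $\sigma$ is the total length of its interior chords, the inequality becomes $(m-n)\ell<\sigma$. When $m\le n$ it is immediate, because a proper inscribed polygon must use at least one chord and hence $\sigma>0$. The delicate case is $m>n$: here I would exploit that the boundary arc cut off by each chord alternates in sign, so a single chord can create a defect of at most one edge, and then combine the strict triangle inequality for geodesics of $\mathbb{H}^2(\kappa)$ with the convexity of $\Omega_{a,b}$ to bound each such chord strictly below, yielding $\sigma>(m-n)\ell$. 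I expect this case to be the main obstacle, as it genuinely uses the hyperbolic geometry encoded by the angle $\frac{\pi}{k}$ at $p_0$ and by~\eqref{eqn:ell}, rather than mere equality of edge lengths; it is precisely the point at which the arguments of~\cite{Y,Me} for symmetric admissible polygons must be adapted.

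Finally, for $a=\infty$ and/or $b=\infty$ I would obtain $\Sigma_{a,b}$ as a limit of the bounded solutions $\Sigma_{a',b'}$ as $a'\to a$ and $b'\to b$. The barriers $\mathcal{U}_p$ and $\mathcal{I}$ give uniform height control on compact subsets away from the $\pm\infty$ edges, so standard compactness for minimal graphs with locally bounded gradient yields a convergent subsequence whose limit is the desired Jenkins–Serrin graph over the ideal polygon, the admissibility conditions passing to the limit by the continuity and monotonicity of~\eqref{eqn:ell} in $a$ and $b$; this is the extension of~\cite{Me} that the statement requires. Uniqueness up to vertical translation then follows from the general Jenkins–Serrin uniqueness of~\cite{Y,Me}: two solutions with the same infinite boundary data have a difference whose flux vanishes across every edge, which forces them to coincide up to an additive constant by the maximum principle.
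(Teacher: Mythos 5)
Your strategy---verify the Jenkins--Serrin admissibility conditions for $\Omega_{a,b}$ and feed them into the existence theory of \cite{Y,Me}---is a legitimate route for bounded $a,b$, but both of the places where your proposal has to produce an actual argument contain genuine gaps, and they are exactly the places where the lemma requires work.

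First, the inequality $(m-n)\ell<\sigma$ in the case $m>n$ is precisely what you leave unproved, and the two tools you propose for it are not available. The domain $\Omega_{a,b}$ is \emph{not} convex in general: the interior angle at the even-indexed vertices is twice the angle of $T_{a,b}$ at $p_2$, which tends to $2(\pi-\tfrac{\pi}{k})>\pi$ as $b\to0$ whenever $k\geq3$. Nor can interior chords be bounded below by $\ell$ one at a time: the chord $\overline{p_1p_3}$ joining consecutive odd vertices satisfies $\cosh(\delta\,|p_1p_3|)=\cosh^2(a\delta)-\sinh^2(a\delta)\cos\tfrac{2\pi}{k}$, which is smaller than $\cosh(\delta\ell)$ once $b$ is large compared with $a$. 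A correct verification must exploit the parity structure: a chain of retained edges with $+\infty$-excess $1$ necessarily runs from an odd vertex to an even one, so the chords that can actually carry excess join vertices at distances $a$ and $b$ from $p_0$ subtending an angle at least $\tfrac{3\pi}{k}$, and only then does the law of cosines (as in \eqref{eqn:ell}, with $\cos\tfrac{3\pi}{k}<\cos\tfrac{\pi}{k}$) yield the strict bound; chords of the ``wrong'' parity, like $\overline{p_1p_3}$, never create excess. You explicitly flag this case as ``the main obstacle'' rather than resolving it, so the core of your proof is missing. (Incidentally, the admissibility inequalities in $\mathbb{E}(\kappa,\tau)$ involve only lengths: the $\tau$-correction is already built into $Gu$, so there is no extra holonomy term in them.) It is worth knowing that the paper bypasses this bookkeeping entirely: by uniqueness and Schwarz reflection it suffices to solve the problem over the single triangle $T_{a,b}$ with data $0$ on $\overline{p_0p_1}\cup\overline{p_0p_2}$ and $+\infty$ on $\overline{p_1p_2}$, where the Jenkins--Serrin condition reduces to the triangle inequality.

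Second---and this is where the real content of the lemma lies---your treatment of the ideal cases would fail as written. A family of graphs blowing up to $+\infty$ on boundary edges cannot be controlled from above by the umbrella $\mathcal{U}_p$ or the invariant surface $\mathcal{I}$ of Section~\ref{sec:minimal-graphs}: these are entire graphs taking finite values, so near the $+\infty$ edges every solution lies \emph{above} them; they are barriers from below, never from above. Moreover, the exhaustion you propose ($\Sigma_{a',b}$ with $a'\to\infty$) is not obviously monotone, because the $+\infty$ edges $\overline{p_1(a')p_2}$ rotate as $a'$ grows. The paper's proof of the mixed case supplies exactly the missing device: it exhausts $T_{\infty,b}$ by triangles $p_0p_{1,n}p_2$ with $p_{1,n}$ diverging along the \emph{fixed} complete geodesic $\gamma$ containing $\overline{p_1p_2}$, so that all truncated $+\infty$ edges lie on $\gamma$ and the sequence is increasing, and it uses as upper barrier the graph given by \cite{Me} over an ideal triangle $T'\supset T_{\infty,b}$ with three ideal vertices, with value $+\infty$ on $\gamma$ and $0$ on the other two sides. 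Without a barrier of this kind your increasing limit could be identically $+\infty$. Finally, uniqueness in the unbounded cases is not the routine flux-plus-maximum-principle argument you invoke (that is the bounded-domain argument already contained in \cite{Y}); for domains with ideal vertices and edges of infinite length the paper has to appeal to the arguments of \cite{CR,LR}.
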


\begin{proof}
If $a,b\in(0,\infty)$, then the existence and uniqueness of solution follows from the Jenkins-Serrin theorem proved by Younes in~\cite{Y}. Likewise, if $a=b=\infty$, then the existence of solution to the problem was given by Melo~\cite{Me}.

Let us now discuss the existence of solution in the case where only one of the parameters $a$ or $b$ is infinite, i.e., only one of the points $p_1$ or $p_2$ is ideal. Assume without loss of generality that $a=\infty$, so that $p_1$ is ideal. Let $\gamma\subset\mathbb{H}^2(\kappa)$ be the complete geodesic containing $\overline{p_1p_2}$. Take a sequence of Jenkins-Serrin graphs over finite triangles with vertices $p_0$, $p_{1,n}$ and $p_2$, being $p_{1,n}\in\gamma$ points diverging to~$p_1$, with prescribed boundary values $0$ on $\overline{p_0 p_{1,n}}\cup \overline{p_0 p_2}$ and $+\infty$ on $\overline{p_{1,n}p_{2}}$. The maximum principle for minimal graphs on bounded domains (proved in~\cite{Y}) implies that this is an increasing sequence of graphs, and we just need to find an appropriate upper barrier that shows that the sequence is nowhere divergent. To this end, consider a geodesic triangle $T'\subset\mathbb{H}^2(\kappa)$ with three ideal vertices and one side equal to $\gamma$ such that $T_{a,b}\subset T'$. By~\cite{Me} we know there exists a minimal graph $\Sigma'$ over $T'$ with boundary values $+\infty$ on $\gamma$ and $0$ on the other two sides of $T'$. The surface $\Sigma'$ lies above any graph in the sequence by the maximum principle in~\cite{Y}. Then the sequence of graphs converge, up to passing to a subsequence, to a minimal graph $\Sigma_\infty$ taking the prescribed boundary values because of the upper barrier $\Sigma'$ and the fact that, given any compact set in $T_{a,b}$, we can take a term in the sequence (in fact infinitely many) used as a barrier from below. In order to obtain the desired solution over $\Omega_{a,b}$, we extend the graph over $T_{a,b}$ by successive symmetries about $\overline{q_0q_1}$ and $\overline{q_0q_2}$ in $\mathbb{E}(\kappa,\tau)$, where we recall that $q_i=F_0(p_i)=(p_i,0)$ for any $i$.

Uniqueness of solutions in the unbounded cases can be proved by following similar arguments as in~\cite{CR,LR}.
\end{proof}

It follows that this Jenkins-Serrin problem over $\Omega_{a,b}$ is equivalent to the Jenkins-Serrin problem over $T_{a,b}\subset\mathbb{M}^2(\kappa)$ with boundary values $0$ on $\overline{p_0p_1}\cup\overline{p_0p_2}$ and $+\infty$ on $\overline{p_1p_2}$ by uniqueness, since a solution of the latter produces all solutions of the former, up to vertical translations, after successive axial symmetries about $\overline{q_0q_1}$ and $\overline{q_0q_2}$ in $\mathbb{E}(\kappa,\tau)$. Observe that the value $0$
implies that $\overline{q_iq_{k+i}}=F_0(\overline{p_ip_{k+i}})$ is a horizontal geodesic for all $i\in\{1,\ldots,k\}$.

\subsection{The case of $\mathrm{Nil}_3$}

Now we deal with the case $\kappa=0$ and $\tau\neq 0$. Up to scaling the ambient metric, we will assume that $\tau=\frac{1}{2}$ and write $\mathrm{Nil}_3=\mathbb{E}(0,\frac{1}{2})$.

First, let us prove that we can find rather explicit solutions to the Jenkins-Serrin problem over $T_{a,b}$ when $k=2$ and $a=\infty$ or $b=\infty$, because they are foliated by (non-necessarily geodesic) straight lines that intersect a given horizontal geodesic segment $\Gamma\subset\mathrm{Nil}_3$ orthogonally. Up to an ambient isometry, we will suppose $\Gamma$ is contained in the $y$-axis and parametrize such a surface as
\begin{equation}\label{eqn:helicoids:X}
X:I\times\mathbb{R}\to\mathrm{Nil}_3,\qquad X(u,v)=\left(u,v,u\,h(v)\right),
\end{equation}
for some smooth real function $h:I\subseteq\mathbb{R}\to\mathbb{R}$ representing the slope of the straight line going through $(0,v,0)$. We shall write $h(v)=\tfrac{1}{2}(v-f(v))$, which simplifies the subsequent calculations (note that translations along $\Gamma$ in the $\mathrm{Nil}_3$-geometry correspond to adding a constant to $f$). We will assume that $0\in I$ and $h(0)=0$, which implies that $X$ is axially symmetric with respect to the $x$-axis (i.e., $f$ and $h$ are odd functions). The mean curvature of the parametrization~\eqref{eqn:helicoids:X} is given by
\[H(u,v)=\frac{u ((4+f(v)^2) f''(v)-2 f(v)(f'(v)-1)(f'(v)-2))}{2 (u^2f'(v)^2-4 u^2f'(v)+f(v)^2+4 u^2+4)^{3/2}}.\]
Hence $X$ is minimal if and only if $f$ satisfies the second-order \textsc{ode}
\begin{equation}\label{eqn:helicoids:minimal}
  (4+f(v)^2) f''(v)=2 f(v) (f'(v)-1)(f'(v)-2).
\end{equation}
Equation~\eqref{eqn:helicoids:minimal} admits the first integral 
\begin{equation}\label{eqn:helicoids:first}
f'(v)=\frac{2\sqrt{4+c^2f(v)^2}}{\sqrt{4+c^2f(v)^2}+c\sqrt{4+f(v)^2}},
\end{equation}
which depends on a constant of integration $c\neq -1$. Given $\mu\in\mathbb{R}$, the solution of~\eqref{eqn:helicoids:minimal} with initial conditions $f(0)=0$ and $f'(0)=1-2\mu$ (i.e., $h'(0)=\mu$) defined on a maximal interval $(-t_\mu,t_\mu)\subseteq\mathbb{R}$ is the inverse of the following odd function:
\begin{equation}\label{eqn:helicoids:g}
  g_\mu(x)=
  \frac{1}{2}\int_0^x\left(1+\frac{(1+2\mu)\sqrt{4+y^2}}{(1-2\mu)\sqrt{4+(\frac{1+2\mu}{1-2\mu})^2y^2}}\right)\mathrm{d} y.
\end{equation}
This follows from an elementary integration of~\eqref{eqn:helicoids:first} by setting $c=\frac{1+2\mu}{1-2\mu}$, and covers all initial values of the first derivative  except for $\mu=\frac{1}{2}$. If $\mu\to\frac{1}{2}$, from above or from below, then the integrand of~\eqref{eqn:helicoids:g} converges monotonically to a function which is not integrable in a neighborhood of $0$. However, the case $\mu=\frac{1}{2}$ corresponds to the constant solution $f(v)=0$ (which has no inverse) and leads to the invariant surface $\mathcal{I}$, see Section~\ref{sec:minimal-graphs} and Remark~\ref{rmk:helicoid-nil-cases}. 

\begin{lemma}\label{lemma:helicoids-nil}
There is a continuous $1$-parameter family $\mathcal{H}_\mu$, $\mu\in\mathbb{R}$, of complete properly embedded minimal surfaces in $\mathrm{Nil}_3$ foliated by straight lines orthogonal to a horizontal geodesic $\Gamma$ and containing a horizontal geodesic orthogonal to $\Gamma$.
\begin{itemize}
  \item If $\frac{-1}{2}\leq \mu\leq\frac{1}{2}$, then $\mathcal{H}_\mu$ is an entire minimal graph.

  \item Otherwise, $\mathcal{H}_\mu$ is a horizontal helicoid-type minimal surface of axis $\Gamma$.
\end{itemize}
The subfamilies with $\mu>\frac{1}{2}$ or $\mu<\frac{-1}{2}$ differ in the direction of rotation, and both of them can be independently reparametrized by the distance between two successive vertical geodesics contained in $\mathcal{H}_\mu$, which ranges from $0$ (when $|\mu|$ goes to $+\infty$) to $+\infty$ (when $|\mu|$ converges to $\frac 12$).
\end{lemma}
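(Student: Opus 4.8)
The plan is to read off all the geometric information from the odd solution $f=f_\mu$ of \eqref{eqn:helicoids:minimal} with $f(0)=0$, $f'(0)=1-2\mu$, equivalently from its inverse $g_\mu$ in \eqref{eqn:helicoids:g}, and to control the maximal interval $(-t_\mu,t_\mu)$ on which $f_\mu$ is defined. Writing $c=\frac{1+2\mu}{1-2\mu}$, the integrand of \eqref{eqn:helicoids:g} equals $\frac12\bigl(1+c\,\tfrac{\sqrt{4+y^2}}{\sqrt{4+c^2y^2}}\bigr)$, and since $t_\mu=\lim_{x\to\infty}|g_\mu(x)|=\int_0^\infty|g_\mu'(y)|\,\mathrm{d}y$, the whole dichotomy reduces to deciding when this integral is finite. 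First I would record the sign analysis: $c>0$ holds exactly for $\mu\in(-\tfrac12,\tfrac12)$, in which case $g_\mu'>0$ with $g_\mu'(y)\to 1$ as $y\to\infty$, so $g_\mu\colon\mathbb{R}\to\mathbb{R}$ is an increasing bijection, $f_\mu$ is defined on all of $\mathbb{R}$, and $X$ is the entire graph $z=x\,h_\mu(y)$. The endpoints $\mu=\pm\tfrac12$ are not covered by $g_\mu$, but correspond to the invariant surface $\mathcal I$ of Section~\ref{sec:minimal-graphs} (the constant solution $f\equiv 0$ for $\mu=\tfrac12$ and the linear solution $f(v)=2v$ for $\mu=-\tfrac12$), which are entire minimal graphs; this is the content of Remark~\ref{rmk:helicoid-nil-cases}, and it settles the first bullet. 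Continuity of $\mathcal H_\mu$ in $\mu$, including at $\mu=\tfrac12$, follows from the smooth dependence of the solution of \eqref{eqn:helicoids:minimal} on the initial slope $f'(0)=1-2\mu$.

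For $|\mu|>\tfrac12$ one has $c<0$, and the expansion $\tfrac{\sqrt{4+y^2}}{\sqrt{4+c^2y^2}}=\tfrac{1}{|c|}\bigl(1+O(y^{-2})\bigr)$ gives $g_\mu'(y)=O(y^{-2})$, so the integral converges and $t_\mu<\infty$. Hence $f_\mu$ is defined only on $(-t_\mu,t_\mu)$ with $f_\mu(v)\to\pm\infty$, that is $h_\mu(v)=\tfrac12(v-f_\mu(v))\to\mp\infty$, as $v\to\pm t_\mu$. Reparametrizing each ruling by its height $w=u\,h_\mu(v)$ turns $X$ into $\bigl(w/h_\mu(v),v,w\bigr)$, which tends to $(0,\pm t_\mu,w)$ as $v\to\pm t_\mu$; this identifies the vertical fibre over $(0,\pm t_\mu)$, a vertical geodesic, as a boundary curve of the graph piece over the strip $\{|y|<t_\mu\}$. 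Since the rotation by $\pi$ about a vertical fibre is an orientation- and fibre-preserving isometry of $\mathrm{Nil}_3$ fixing that fibre pointwise, Schwarz reflection extends the surface analytically across these geodesics; iterating produces a complete surface containing the vertical geodesics over $(0,(2j+1)t_\mu)$, $j\in\mathbb{Z}$, invariant under the screw motion obtained by composing two such reflections (a translation by $4t_\mu$ along $\Gamma$ together with a vertical shift), which is exactly a horizontal helicoid of axis $\Gamma$. Because the rotation by $\pi$ about $(0,t_\mu)$ acts on the base as $(x,y)\mapsto(-x,2t_\mu-y)$, the reflected graph pieces lie over the pairwise disjoint strips $\{(2j-1)t_\mu<y<(2j+1)t_\mu\}$, so the surface is properly embedded; containing vertical fibres, it is not a graph, which proves the second bullet.

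It remains to analyze the two helicoidal subfamilies. The slope $h_\mu'(0)=\mu$ is positive for $\mu>\tfrac12$ and negative for $\mu<-\tfrac12$, so the rulings tilt in opposite vertical senses as one moves along $\Gamma$; this is the announced difference in the direction of rotation, corresponding to $c<-1$ versus $-1<c<0$. The distance between two successive vertical geodesics is the base distance $2t_\mu$ between $(0,t_\mu)$ and $(0,3t_\mu)$. Differentiating under the integral sign and using $\frac{\partial}{\partial c}\bigl(c(4+c^2y^2)^{-1/2}\bigr)=4(4+c^2y^2)^{-3/2}$, one gets $\frac{\partial}{\partial c}g_\mu'(y)=2\sqrt{4+y^2}\,(4+c^2y^2)^{-3/2}>0$, so $t_\mu$ is strictly monotone in $c$ on each of the intervals $(-\infty,-1)$ and $(-1,0)$, hence strictly monotone in $\mu$ on $(\tfrac12,\infty)$ and on $(-\infty,-\tfrac12)$. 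Finally, the integrand tends to a non-integrable function near $y=0$ as $|\mu|\to\tfrac12$ (of order $\tfrac1{|y|}$ as $c\to-\infty$, and the constant $\tfrac12$ as $c\to0$), so $t_\mu\to\infty$; and it tends to $0$ pointwise as $|\mu|\to\infty$, whence $t_\mu\to 0$ by monotone convergence. Thus $\mu\mapsto 2t_\mu$ is a homeomorphism of each subfamily onto $(0,\infty)$, giving the desired reparametrization.

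The main obstacle is this last quantitative step: establishing the finiteness, the monotonicity, and the two boundary limits of $t_\mu$ all rest on a uniform control of the integrand of \eqref{eqn:helicoids:g}, together with correctly pinning down the vertical geodesics of the extended surface so that the distance between successive vertical geodesics is well defined and the reflected pieces are seen to be disjoint.
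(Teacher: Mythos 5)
Your proposal is correct and follows essentially the same route as the paper's proof: the sign analysis of $c=\frac{1+2\mu}{1-2\mu}$ in the integrand of \eqref{eqn:helicoids:g} to separate entire graphs from helicoids, finiteness of $t_\mu$ from the $O(y^{-2})$ decay of $g_\mu'$ (the paper exhibits this via an algebraic rewriting of $g_\mu$ rather than an asymptotic expansion), reflection across the limiting vertical fibers to build $\mathcal{H}_\mu$, and strict monotonicity plus monotone-convergence limits of $t_\mu$ (you differentiate the integrand in $c$, the paper in $\mu$; the two are equivalent by the chain rule). The only slip is immaterial: $\mu=-\tfrac12$ (where $c=0$) \emph{is} covered by \eqref{eqn:helicoids:g}, giving $g_{-1/2}(x)=x/2$ and $f(v)=2v$, i.e., the invariant surface $\mathcal{I}$, exactly as you identify.
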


\begin{proof}
If $\mu=\frac{1}{2}$, we get $f(v)=0$ and $X$ parametrizes globally the invariant entire minimal graph $\mathcal{I}$. If $\frac{-1}{2}\leq \mu<\frac{1}{2}$, then $c=\frac{1+2\mu}{1-2\mu}\geq 0$ and the integrand in~\eqref{eqn:helicoids:g} is greater than or equal to $1$, whence   $g_\mu$ is a diffeomorphism from $\mathbb{R}$ to $\mathbb{R}$, and so is its inverse $f=g_\mu^{-1}$. This means that $X$ defines an entire minimal graph. 

Assume that $\mu>\frac{1}{2}$ (resp.\ $\mu<\frac{-1}{2}$). The integrand in~\eqref{eqn:helicoids:g} is then strictly negative (resp.\ strictly positive). In particular, $g_\mu$ is strictly monotonic, so it is a diffeomorphism from $\mathbb{R}$ onto its image. Note that $c=\frac{1+2\mu}{1-2\mu}\neq-1$ is negative and a simple algebraic manipulation allows us to rewrite Equation~\eqref{eqn:helicoids:g} as
  \[
    g_\mu(x)=\int_0^x\frac{2(1-c^2)\,\mathrm{d}
      y}{4+c^2y^2+\sqrt{(4+c^2y^2)(4c^2+c^2y^2)}}.
  \]
Therefore, the limit $t_\mu=\lim_{x\to\infty}|g_\mu(x)|$ is a non-zero real number such that $f=g_\mu^{-1}:(-t_\mu,t_\mu)\to\mathbb{R}$ is a diffeomorphism. Since $h(v)$ diverges as $v$ approaches $\pm t_\mu$, the straight lines foliating the surface rotate monotonically having the vertical geodesics $s\mapsto(0,\pm t_\mu,s)$ as limits, see Figure~\ref{Fig-PoligonoH12}. The helicoid-type surface $\mathcal{H}_\mu$ is obtained after successive reflections about these vertical geodesics. However, the above argument yields a different monotonicity of $g_\mu$ (and hence of $h$) for $\mu>\frac{1}{2}$ and $\mu<\frac{-1}{2}$, which reflects the different directions of rotation.

As for the last sentence of the statement, we have shown that two successive vertical geodesics in $\mathcal{H}_\mu$ are $s\mapsto(0,-t_\mu,s)$ and $s\mapsto(0,t_\mu,s)$, whose distance is $2t_\mu$ (any other two successive vertical geodesic give rise to the same distance by symmetry). Then, we will finish by showing that $\mu\mapsto t_\mu$ induces an increasing bijection from $(-\infty,\frac{-1}{2})$ to $(0,+\infty)$ and a decreasing bijection from $(\frac{1}{2},+\infty)$ to $(0,+\infty)$. The  integrand in~\eqref{eqn:helicoids:g}, as a function of $\mu\in(-\infty,\frac{-1}{2})\cup (\frac{1}{2},+\infty)$, satisfies
  \[
    \frac{\partial}{\partial \mu}
    \left(1+\frac{(1+2\mu)\sqrt{4+y^2}}{(1-2\mu)\sqrt{4+(\frac{1+2\mu}{1-2\mu})^2y^2}}\right)=
    \frac{8\sqrt{4+y^2}}{(1-2\mu)^2(4+(\frac{1+2\mu}{1-2\mu})^2y^2)^{3/2}}
    >0.
  \]
Taking into account the absolute value in the definition of $t_\mu$, it follows that $\mu\mapsto t_\mu$ is strictly increasing (resp.\ decreasing) when restricted to $(-\infty,\frac{-1}{2})$ (resp.\ $(\frac{1}{2},+\infty)$). Taking limits in~\eqref{eqn:helicoids:g} by means of the monotone convergence theorem, we get that $\lim_{\mu\to\pm\infty}t_\mu=0$ and $\lim_{\mu\to\pm1/2}t_\mu=+\infty$, so we are done.
\end{proof}

\begin{figure}
  \begin{center}
    \includegraphics[width=\textwidth]{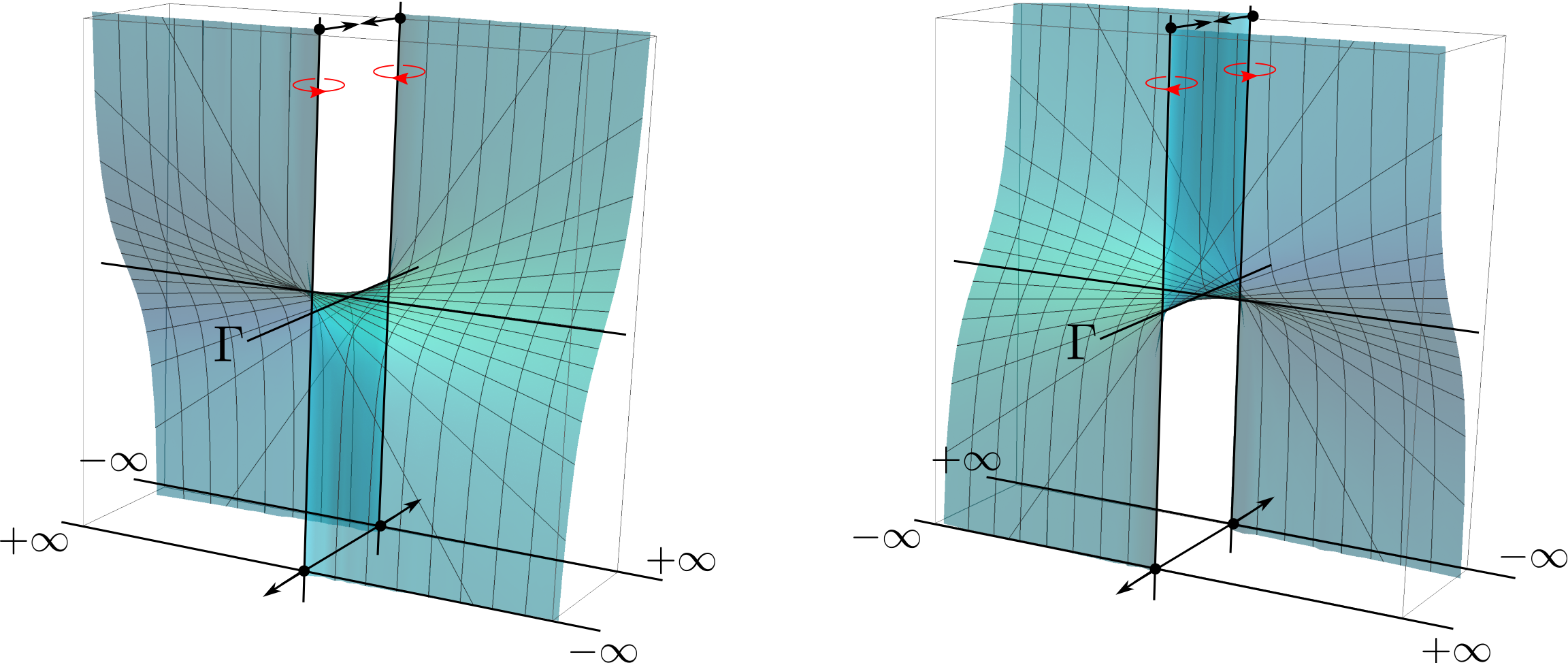}
  \end{center}
  \caption{Rotation of the normal of the horizontal helicoids $\mathcal{H}_\mu$ in the cases $\mu<\frac{-1}{2}$ (left) and $\mu>\frac{1}{2}$ (right).}
  \label{Fig-PoligonoH12}
\end{figure}

\begin{remark}\label{rmk:helicoid-nil-cases}
If $\mu=\pm\frac{1}{2}$, we get $h(v)=\pm\frac{1}{2}v$, and both cases are congruent to the invariant surface $\mathcal{I}$ discussed in Section~\ref{sec:minimal-graphs}. If $\mu=0$, then $h(v)=0$, so $\mathcal{H}_0$ is the umbrella $\mathcal{U}_0$ centered at the origin. Daniel and Hauswirth~\cite[Section~7]{DH} gave a family of   horizontal helicoids which correspond to our case $\mu<\frac{-1}{2}$ in view of~\cite[Lemma~5.1]{Ple13}. Daniel~\cite[Examples~8.4 and~8.5]{Dan11} also constructed two entire minimal graphs foliated by straight lines meeting but they do not contain horizontal geodesics orthogonal to the axis. This reveals that the assumption $h(0)=0$ excludes some cases among all solutions of~\eqref{eqn:helicoids:minimal}. Nevertheless, we have included   that assumption in our study as it will be used in the construction below.
\end{remark}

\begin{remark}\label{rmk:helicoid-nil-congruency}
No two of the surfaces $\mathcal{H}_\mu$ are congruent in $\mathrm{Nil}_3$, except for $\mathcal{H}_{1/2}$ and $\mathcal{H}_{-1/2}$. Among the surfaces with $\mu\in(-\infty,\frac{-1}{2})\cup(\frac{1}{2},+\infty)$, this assertion follows from the fact that isometries of $\mathrm{Nil}_3$ preserve vertical geodesics, distances and orientation. Also, such helicoid-type surfaces are not congruent to any $\mathcal{H}_\mu$ with $\mu\in[\frac{-1}{2},\frac{1}{2}]$ since they are not entire graphs. As for the case $\mu\in(\frac{-1}{2},\frac{1}{2})$, the angle function of $\mathcal{H}_\mu$ in the parametrization~\eqref{eqn:helicoids:X} is given by
\begin{equation}\label{eqn:Hmu-angle}
  \nu(u,v)=\frac{2}{\sqrt{u^2(1-2h'(v))^2+(2h(v)+v)^2+4}},
\end{equation}
whence the origin is the only point of $\mathcal{H}_\mu$ with angle $1$. Since isometries preserve the angle function, the origin must be fixed by a potential isometry. It is not difficult to deduce from~\eqref{eqn:Hmu-angle} and that it must also fix the horizontal  geodesic $\Gamma$, and hence the direction of rotation along it. As the Gauss curvature of $\mathcal{H}_\mu$ at the origin is $\frac{-3}{4}-\mu^2$ and the direction of rotation is different for $\mu>0$ and $\mu<0$, we conclude that no two of the $\mathcal{H}_\mu$ are congruent if $\mu\in(\frac{-1}{2},\frac{1}{2})$.
\end{remark}

We will use these surfaces $\mathcal{H}_\mu$ to prove the
existence of solution of the Jenkins-Serrin problem on $\Omega_{a,b}$ described above.

\begin{lemma}\label{lemma:JS-nil}
Assume that $\kappa=0$ and $\tau\neq 0$. Given $k\geq 2$ and $a,b\in(0,\infty]$ not both of them equal to $\infty$, the Jenkins-Serrin problem in $\mathbb{E}(0,\tau)$ over $T_{a,b}\subset\mathbb{R}^2$ with boundary values $0$ on $\overline{p_0p_1}\cup\overline{p_0p_2}$ and $+\infty$ on $\overline{p_1p_2}$ has a solution.
\end{lemma}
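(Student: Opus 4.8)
The plan is to build the solution directly out of the horizontal helicoids $\mathcal{H}_\mu$ of Lemma~\ref{lemma:helicoids-nil}, which serve both as the explicit solution in the simplest case and as upper barriers in general. I would first dispose of the case $k=2$ with one ideal vertex by exhibiting the solution explicitly. For $k=2$ the angle at $p_0$ equals $\frac\pi2$, so $T_{a,b}$ is a right triangle with legs on the $x$- and $y$-axes; when $a=\infty$ the domain $T_{\infty,b}$ is the half-strip $\{u\geq 0,\ 0\leq v\leq b\}$ and $\overline{p_1p_2}$ is the ray $\{u\geq 0,\ v=b\}$. Choosing $\mu$ with $t_\mu=b$ (possible since $t_\mu$ sweeps $(0,\infty)$ by Lemma~\ref{lemma:helicoids-nil}) and the direction of rotation for which $h(v)\to+\infty$ as $v\to b^-$, I would read off from the parametrization~\eqref{eqn:helicoids:X} that the restriction of $\mathcal{H}_\mu$ to $\{u\geq 0\}$ has exactly the prescribed values: since $h(0)=0$ one gets $z=uh(v)=0$ along $v=0$ and along $u=0$, whereas $z=uh(v)\to+\infty$ along $v=b$, $u>0$; note the vertical geodesic of $\mathcal{H}_\mu$ sits over the corner $p_2=(0,b)$, which is the correct Jenkins-Serrin behavior at a finite-to-infinite jump. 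The case $b=\infty$ is symmetric after interchanging the two axes.

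For general $k\geq 2$ and finite $a,b$ I would realize the solution as a monotone limit. Solving the minimal surface equation over the convex triangle $T_{a,b}$ with continuous data equal to $0$ on $\overline{p_0p_1}\cup\overline{p_0p_2}$ and to a constant $n$ on $\overline{p_1p_2}$ produces graphs $u_n$ (the corners at $p_1,p_2$ being convex, the jump in the data is admissible), and by the comparison principle $u_n$ is nondecreasing in $n$. The crux is to bound $u_n$ from above on compact subsets of the interior, and here I would use a piece of a helicoid $\mathcal{H}_\mu$ as upper barrier. After a rigid motion of the base lifted to an ambient isometry of $\mathrm{Nil}_3$, I would place the blow-up line $v=t_\mu$ of $\mathcal{H}_\mu$ along the geodesic $\ell$ containing $\overline{p_1p_2}$, with the $+\infty$ side facing $T_{a,b}$, and choose $\mu$ so that $t_\mu$ is large enough for the whole triangle to lie in the strip where $\mathcal{H}_\mu$ is a graph. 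Sliding the helicoid along $\ell$ moves the single base point where $\mathcal{H}_\mu$ fails to be a graph off the segment $\overline{p_1p_2}$, so that the barrier is $+\infty$ on all of $\overline{p_1p_2}$, and a vertical translation (an isometry) makes it nonnegative on the two compact sides at $p_0$. This barrier $w$ then dominates $u_n$ on $\partial T_{a,b}$ for all $n$, whence $u_n\leq w$ on $T_{a,b}$ by comparison.

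With the uniform upper bound secured, the increasing family $u_n$ converges on compact subsets of the interior to a minimal graph $u_\infty$, and standard Jenkins-Serrin arguments (using $u_n=n$ on $\overline{p_1p_2}$ together with the barrier) show that $u_\infty$ takes the value $+\infty$ on $\overline{p_1p_2}$ and $0$ on the remaining two sides. For the cases in which exactly one of $a,b$ is infinite, I would exhaust $T_{\infty,b}$ (resp.\ $T_{a,\infty}$) by the finite triangles $T_{a_m,b}$ with $a_m\to\infty$, solve each as above, observe that a single helicoid barrier with $t_\mu$ large enough serves uniformly for the whole nested family, and pass to the limit exactly as in the proof of Lemma~\ref{lemma:JS-psl}, the value $0$ along the now-infinite side being preserved.

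I expect the barrier step to be the main obstacle. One must select the direction of rotation of $\mathcal{H}_\mu$ so that it blows up to $+\infty$ rather than $-\infty$ on the side facing the triangle, verify that a sufficiently large $t_\mu$ makes the strip contain $T_{a,b}$ while keeping the vertical geodesic off $\overline{p_1p_2}$, and ensure the shifted barrier dominates $u_n$ on the whole boundary—including the noncompact side in the ideal-vertex cases, which is precisely why the exhaustion by finite triangles is convenient there.
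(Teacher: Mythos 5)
Your $k=2$ case (explicit helicoid solution with $t_\mu=b$) and your truncation scheme for bounded triangles are in the spirit of the paper; note, though, that the solvability of the Dirichlet problem in $\mathrm{Nil}_3$ over a bounded convex triangle with piecewise continuous data, which you assert as standard, is exactly Pinheiro's result, which the paper cites to dispose of the whole finite case $a,b<\infty$ in one line. The genuine gap is in the case $k\geq 3$ with one ideal vertex.

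When you exhaust $T_{\infty,b}$ by the triangles $T_{a_m,b}$ with $a_m\to\infty$, the Jenkins--Serrin solutions $\Sigma_m$ over these triangles form a \emph{decreasing} sequence: restricted to $T_{a_m,b}$, the solution $\Sigma_{m+1}$ has boundary values $0$, $0$ and finite, while $\Sigma_m$ has values $0$, $0$ and $+\infty$. This is the opposite of the situation in Lemma~\ref{lemma:JS-psl}, where the truncation points $p_{1,n}$ diverge along the fixed geodesic $\gamma$ containing the ideal side, so the $+\infty$ data sits on a fixed geodesic and the sequence increases towards an upper barrier. With your exhaustion the $+\infty$ sides $\overline{p_{1,m}p_2}$ lie on lines that rotate as $a_m\to\infty$, and no single helicoid can dominate the whole family: each $\Sigma_m$ equals $+\infty$ on its own slanted side, where any fixed $\mathcal{H}_\mu$ (with blow-up locus along the limiting horizontal ray) takes finite values. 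Moreover, for a decreasing sequence an upper barrier is beside the point: the sequence is bounded below by $0$ (maximum principle), so it converges; the danger is that the limit fails to attain the value $+\infty$ on the ray $\overline{p_1p_2}$. What is needed --- and what the paper's proof supplies --- is a \emph{lower} barrier blowing up along that ray and lying below every $\Sigma_m$, namely the translated $k=2$ half-strip solution (part of a helicoid $\mathcal{H}_\mu$) over $[b\cos\frac\pi k,+\infty)\times[0,b\sin\frac\pi k]$ with value $+\infty$ on the top edge and $0$ on the rest, which stays below all $\Sigma_m$ precisely because $\Sigma_m\geq 0$. Alternatively, you could salvage your plan by changing the exhaustion to triangles $p_0p_{1,m}p_2$ with $p_{1,m}$ diverging along the ray $\overline{p_1p_2}$ itself, mimicking Lemma~\ref{lemma:JS-psl}: then the sequence is increasing and your single helicoid with $2t_\mu$ exceeding the width of the strip, blow-up line along the full horizontal line through $p_2$ and vertical geodesic slid to the left of $p_2$, genuinely is a uniform upper barrier. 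As written, however, the limiting step for $k\geq 3$ does not go through.
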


\begin{proof}
We will assume that $\tau=\frac12$ after an homothety of the metric. If $a,b<\infty$, then $\Omega_{a,b}\subset\mathbb{R}^2$ is a bounded triangle, and Pinheiro~\cite{Pinheiro} proved that there exists a unique solution to the problem under this assumption.

In the infinite case, we will begin with $k=2$. If $a=\infty$, there is $\mu>\frac{1}{2}$ such that $t_\mu=b$ by Lemma~\ref{lemma:helicoids-nil}, and part of $\mathcal{H}_\mu$ is a graph over the interior of the half-strip $T_{\infty,b}=[0,+\infty)\times[0,b]$ that solves the desired problem. Likewise, if $b=\infty$, we can find $\mu'<\frac{-1}{2}$ such that $t_\mu=a$, so part of $\mathcal{H}_{\mu'}$ can be written as the desired graph over the interior of $T_{a,\infty}=[0,a]\times[0,+\infty)$ after a rotation of angle $\frac{\pi}{2}$.

Finally, we will assume that $k\geq 3$ and $a=\infty$ (the case $b=\infty$ follows similarly). For each $n\in\mathbb{N}$, let $\Sigma_{n,b}$ be the solution to the corresponding Jenkins-Serrin problem over $T_{n,b}$, whose existence follows from~\cite{Pinheiro}. We observe that $\Sigma_{n,b}$ is a decreasing sequence of minimal graphs, by the maximum principle for graphs over bounded domains proven in~\cite{Pinheiro}. Let $\Sigma'$ be the graph over the strip $[b\cos\frac\pi k,+\infty)\times[0,b \sin\frac\pi k]$ taking boundary values $+\infty$ over $[b\cos\frac\pi k,+\infty)\times\{b \sin\frac\pi k\}$ and $0$ in the remaining part of the boundary. The surface $\Sigma'$ is nothing but part of a translated helicoid $\mathcal{H}_\mu$. By the maximum principle in~\cite{Pinheiro}, all $\Sigma_{n,b}$ are bounded from below by $\Sigma'$. This implies the convergence on compact subsets of a subsequence of $\Sigma_{n,b}$ to a minimal graph over $T_{\infty,b}$, taking the desired boundary values.
\end{proof}

By successive reflections about the horizontal boundary of the surface obtained in Lemma~\ref{lemma:JS-nil}, we obtain the desired solution $\Sigma_{a,b}$ to the Jenkins-Serrin problem on $\Omega_{a,b}$.

\begin{remark}
In the case $a=b=\infty$ (not considered in Lemma~\ref{lemma:JS-nil}), the segment $\overline{p_1p_2}$ disappears and the solution to the Jenkins-Serrin problem over the wedge $T_{\infty,\infty}=\cup_{a,b>0}T_{a,b}$ is not unique, see~\cite{NST}. Nonetheless, we can define $\Sigma_{\infty,\infty}$ as the limit of $\Sigma_{a,b}$ as $a,b\to\infty$. Cartier~\cite[Corollary~3.8]{Cartier} obtained entire graphs in $\mathrm{Nil}_3$ with zero values on $\partial T_{\infty,\infty} $ and asymptotically positive on the interior of $T_{\infty,\infty}$. By using them as barriers from below along with the umbrella $\mathcal{U}_0$, if follows that $\Sigma_{\infty,\infty}\subset\mathrm{Nil}_3$ is an entire minimal graph whose restriction to the interior of $T_{\infty,\infty}$ is strictly positive. If $k=2$, then $\Sigma_{\infty,\infty}$ is the invariant surface $\mathcal{I}$ because of the continuity of the family $\mathcal{H}_\mu$ (see Lemma~\ref{lemma:helicoids-nil}).\end{remark}

The surface $\mathcal{H}_\mu$ can be also parametrized globally as  
\[(s,v)\mapsto (s\cos\alpha(v),v,s\sin\alpha(v)),\] 
where $\alpha(v)=\arctan(h(v))$ for $v\in(-t_\mu,t_\mu)$ and extends by symmetry to all $v\in\mathbb{R}$. This parametrization allows us to compute a unit normal and its rotation along the vertical geodesic
$s\mapsto(0,t_\mu,s)$ as
\begin{equation}\label{eqn:Hmu-theta}
  N=\frac{-1}{\sqrt{1+\sigma^2s^2}}E_1-\frac{\sigma s}{\sqrt{1+\sigma^2s^2}}E_2,\quad \theta'(s)=\frac{\mathrm{d}}{\mathrm{d} s}\arccos\langle N,E_1\rangle=\frac{-\sigma}{1+\sigma^2s^2},
\end{equation}
where 
\[
  \sigma=\alpha'(t_\mu)=\lim_{v\to
    t_\mu}\frac{h'(v)}{1+h(v)^2}=\frac{(1+2\mu)^2}{4\mu}.
\]
This last limit follows from writing $h(v)=\frac{1}{2}(v-f(v))$ and substituting $f'(v)$ by means of~\eqref{eqn:helicoids:first} and using that $\lim_{v\to t_\mu}f(v)=\pm\infty$ when taking the limit.

\section{The conjugate construction}\label{sec:constructions}

Throughout this section, we fix $0<H\leq\frac 1 2$ and $k\geq 2$. We will work in the cylinder model for $\mathbb{E}(4H^2-1,H)$ given by~\eqref{eqn:model-Ekt}. Consider the minimal graph $\Sigma_{a,b}\subset\mathbb{E}(4H^2-1,H)$ over the region $\Omega_{a,b}\subset\mathbb{M}^2(4H^2-1)$ with alternating $\pm\infty$ boundary values constructed in Section~\ref{sec:JS}, being $a,b\in(0,\infty]$. The boundary components of $\Sigma_{a,b}$ are the $2k$ complete vertical geodesics $\Gamma_{i}=\pi^{-1}(p_{i})$ for $i\in\{1,\ldots,2k\}$, plus $2k$ ideal horizontal geodesics at $\mathbb{M}^2(4H^2-1)\times\{\pm\infty\}$. The corresponding curves $\Gamma_i$ must be understood as ideal vertical geodesics if $a=\infty$ or $b=\infty$, and they will be used just heuristically if $H=\frac{1}{2}$ because the ideal boundary of $\mathbb{R}^2$ is not well defined. Our goal now is to describe the conjugate $H$-surface $\Sigma^*_{a,b}\subset\mathbb{H}^2\times\mathbb{R}$.

Although there are some dissimilarities between the cases we will treat, let us first explain some common features that will curtail the forthcoming arguments. Since the angle function is preserved under conjugation, we know that $\Sigma^*_{a,b}$ is a multigraph over a (possibly non-embedded) domain $\Omega^*\subset\mathbb{H}^2$. Furthermore, $\Sigma_{a,b}$ is invariant by reflection about the horizontal geodesics $\overline{q_iq_{k+i}}$, $i\in\{1,\ldots,k\}$, so $\Sigma^*_{a,b}$ has mirror symmetry with respect to $k$ vertical planes meeting at a common vertical line, say the $z$-axis, arranged symmetrically. Corollary~\ref{coro:JS} shows that the boundary components of $\Sigma_{a,b}^*$ are the $2k$ complete (possibly ideal) horizontal curves $\Gamma_1^*,\dots,\Gamma_{2k}^*$ along with $2k$ ideal vertical geodesics joining the endpoints of $\Gamma_i^*$ and $\Gamma_{i+1}^*$ for $i\in\{1,\ldots,2k\}$, see also Remark~\ref{rmk:complete-vertical}.

\begin{enumerate}
  \item If $\Gamma_i$ is not an ideal geodesic, let $\theta_i$ be the angle of rotation along $\Gamma_i$ defined by~\eqref{eqn:N-PQ}. It satisfies $\theta_i'<0$ (resp.\ $\theta_i'>0$) if $i$ is odd (resp.\ even), see  Figure~\ref{Fig-Orientation-st-positive}. Recall that the unit normal $N$ of $\Sigma_{a,b}$ is chosen so that the angle function $\nu=\langle N,\xi \rangle$ is positive. Equation~\eqref{eqn:theta} implies that $\kappa_g>2H$ (resp.\ $\kappa_g<2H$), being $\kappa_g$ the geodesic curvature of $\Gamma^*_i$ as a curve in a horizontal slice with respect to the unit normal $N^*$ of $\Sigma_{a,b}^*$. Lemma~\ref{lemma:rotation-curvature} shows that $N^*$ points to the exterior (resp.\ interior) of $\Omega^*\subset\mathbb{H}^2$ if $i$ is odd (resp.\ even).
  
  \item If $\Gamma_i$ is ideal, then we can reason likewise for a sequence of graphs over finite triangles $T_{a_n,b_n}$, with $a_n,b_n<\infty$, converging to $T_{a,b}$. In that limit, $\theta_i'$ converges uniformly to zero, and  Equation~\eqref{eqn:theta} tells us that $\kappa_g=2H$ with respect to $N^*$. As a limit of curves in the assumption of item (1), we infer that $N^*$ points to the exterior (resp.\ interior) of the domain $\Omega^*$ along $\Gamma_i^*$ if $i$ is odd (resp.\ even). Hence, the geodesic curvature of $\Gamma_i^*$ with respect to $\Omega^*$ is $-2H$ (resp.\ $2H$) if $i$ is odd (resp.\ even).
\end{enumerate}

\begin{figure}
	\begin{center}
		\includegraphics[width=\textwidth]{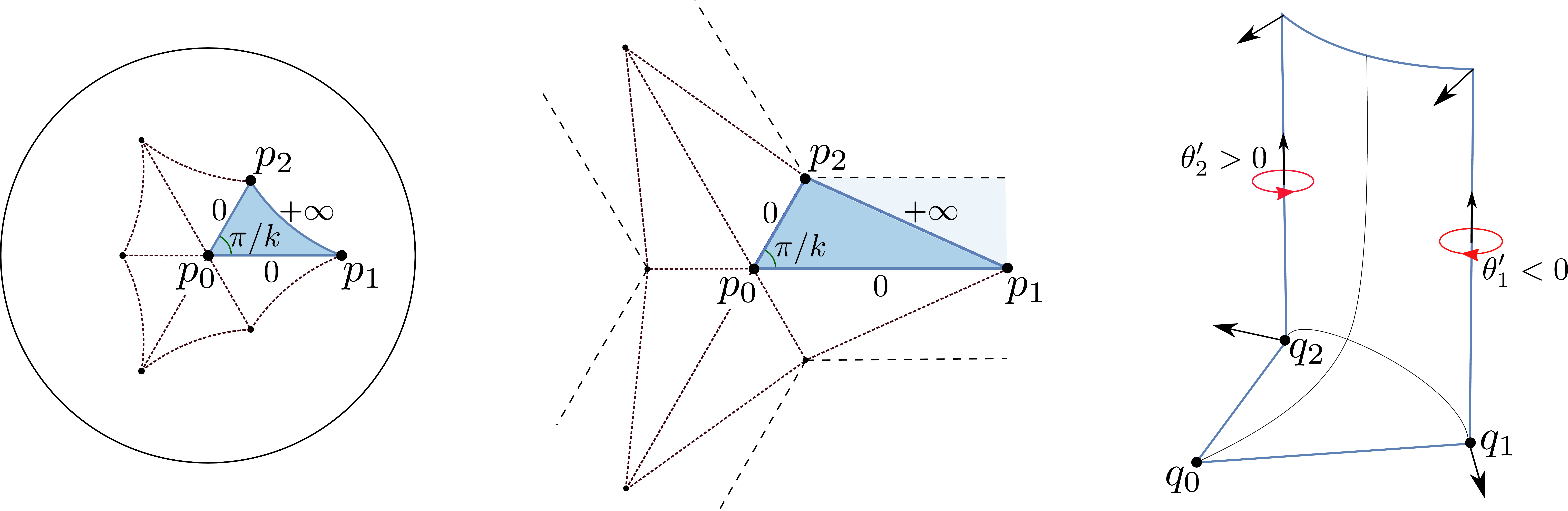}
	\end{center}
	\caption{The domain $\Omega_{a,b}$ with $a,b\in(0,\infty)$ is represented on the left (case $\kappa<0$) and center ($\kappa=0$) for $k=3$. The triangle $T_{a,b}$ is the shaded region and the limit strip $T_{\infty,b}$ appears in lighter color in the central image. On the right, we have sketched $\Sigma_{a,b}$ and the direction of rotation of the normal along $\Gamma_1$ and $\Gamma_2$. We recall that $q_i=F_0(p_i)=(p_i,0)$ for any $i$.} \label{Fig-Orientation-st-positive}
\end{figure}

To see the dependence on the parameters $a$ and $b$, define the function $\rho=\rho(a,b)$ (resp.\ $d=d(a,b)$) as the distance in $\mathbb{H}^2$ from the center $\pi(q_0^*)$ of $\Omega^*$ to the curves $\pi(\Gamma_{2i-1}^*)$ (resp.\ $\pi(\Gamma_{2i}^*)$) in the  projection of the (possibly asymptotic) boundary of $\Sigma^*_{a,b}$. By symmetry, these definitions do not depend on $i\in\{1,\ldots,k\}$. Let us compute the limits $\rho_{\infty}=\rho(\infty,\infty)$ and $d_{\infty}=d(\infty,\infty)$:
\begin{itemize}
  \item If $0<H<\frac{1}{2}$, then $\Sigma_{\infty,\infty}$ is the complete Scherk minimal graph over a symmetric ideal geodesic $2k$-gon, whose conjugate surface is the Scherk $H$-graph in $\mathbb{H}^2\times\mathbb{R}$ over the domain $\Omega^*_{\infty,\infty}$ bounded by $k$ curves of geodesic curvature $2H$ and $k$ curves of geodesic curvature $-2H$ (with respect to $\Omega^*$),   see~\cite[Corollary~1]{ManN}. Therefore, $\rho_\infty$ (resp.\ $d_{\infty}$) is finite and coincides with the distance from the curves of geodesic curvature $-2H$ (resp.\ $2H$) to the center of $\Omega^*_{\infty,\infty}$.
  
  \item If $H=\frac12$, then we have already discussed in
  Section~\ref{sec:JS} that the limit surface $\Sigma_{\infty,\infty}$ is an entire minimal graph, whence $\Sigma_{\infty,\infty}^*$ is an entire $\frac{1}{2}$-graph, see~\cite[Theorem~1.2]{HRS}. This leads to the values $\rho_\infty=d_{\infty}=+\infty$.
\end{itemize}

\begin{lemma}\label{lemma:distance-monotonicity}
The function $d$ is strictly increasing in both variables $a,b\in(0,\infty)$. 
\begin{enumerate}[label=\emph{(\alph*)}]
  \item If $0<H<\frac{1}{2}$, then $d$ is bounded. Furthermore, the functions $a\mapsto d(a,\infty)$ and $b\mapsto d(\infty,b)$ are strictly increasing and range from $0$ to $d_{\infty}$.

  \item If $H=\frac{1}{2}$, then $d(a,\infty)=\rho(\infty,b)=\infty$ for all $a,b\in(0,\infty)$.
\end{enumerate}
\end{lemma}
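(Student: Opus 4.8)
The plan is to establish continuity of $d$ and $\rho$ first, then obtain strict monotonicity from a maximum principle argument carried out in the target $\mathbb{H}^2\times\mathbb{R}$, and finally read off the range and the critical case $H=\frac12$ from the limit surfaces already identified before the statement. Concretely, I would first check that $d$ and $\rho$ are continuous on $(0,\infty]^2\setminus\{(\infty,\infty)\}$. The graphs $\Sigma_{a,b}\subset\mathbb{E}(4H^2-1,H)$ of Lemmas~\ref{lemma:JS-psl} and~\ref{lemma:JS-nil} depend continuously on $(a,b)$ in the $\mathcal{C}^m$-topology on compact sets, being monotone limits of graphs over the nested triangles $T_{a,b}$ with continuously varying barriers; Proposition~\ref{prop:continuity} then transports this to convergence of the conjugate surfaces $\Sigma_{a,b}^*$, so the projected boundary curves $\pi(\Gamma_i^*)$, and hence $d$ and $\rho$, vary continuously. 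I would also record the boundary values of this continuous function: as the relevant parameter tends to $0$ the surface degenerates into $k$ mutually tangent vertical $H$-cylinders through the axis (Remark~\ref{rmk:rescaling}), forcing $d\to 0$; while the corner limit $(a,b)\to(\infty,\infty)$ produces the Scherk $H$-graph computed just before the statement, whose even boundary curves sit at distance $d_\infty$, finite when $H<\frac12$ and infinite when $H=\frac12$.

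\emph{Strict monotonicity.} This is the heart of the proof, and I would run it in $\mathbb{H}^2\times\mathbb{R}$, since the conjugation preserves the induced metric and the angle function but is not order-preserving, so the (correct) ordering of the initial graphs over the nested triangles $T_{a,b_1}\subset T_{a,b_2}$ does not pass directly to the $\Sigma_{a,b}^*$. Along each even curve $\Gamma_{2i}^*$ one has $\theta_{2i}'>0$, hence $\kappa_g<2H$ and $N^*$ pointing into $\Omega^*$ by Lemma~\ref{lemma:rotation-curvature}(a) and~\eqref{eqn:theta}; thus $\pi(\Gamma_{2i}^*)$ is supported from the center side by the vertical $H$-cylinder $C_{d(a,b)}$ tangent to it at its closest point, with a coherent orientation. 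Fixing $a$ and taking $b_1<b_2$ (the variation in $a$ being identical, since $T_{a,b}$ grows in each parameter), I would compare $\Sigma_{a,b_1}^*$ and $\Sigma_{a,b_2}^*$, which share the $k$ vertical symmetry planes and the horizontal symmetry slice, by a first-contact argument: a tangential contact of the two $H$-surfaces, or of one surface with the supporting $H$-cylinder of the other, with matching orientation, is ruled out exactly as in the proof of Lemma~\ref{lemma:rotation-curvature}. This forces the even cylinders to be strictly ordered; combined with continuity on a connected interval it yields strict monotonicity, and the direction (increasing) is pinned by the boundary values $d\to 0$ and $d\to d_\infty$ recorded above.

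\emph{Range and the critical case.} For $0<H<\frac12$, strict monotonicity in both variables together with the corner limit $d(\infty,\infty)=d_\infty<\infty$ gives $d(a,b)<d_\infty$, so $d$ is bounded. Restricting to $b=\infty$ (resp.\ $a=\infty$), the same comparison (or continuity from finite $b$) shows that $a\mapsto d(a,\infty)$ (resp.\ $b\mapsto d(\infty,b)$) is continuous and strictly increasing; by the boundary values it runs from $0$ to $d_\infty$ and is therefore onto $(0,d_\infty)$. For $H=\frac12$, when $b=\infty$ (resp.\ $a=\infty$) the even curves (resp.\ odd curves) become the ends of a $(\frac12,k)$-nodoid (resp.\ $(\frac12,k)$-noid), whose asymptotic $\frac12$-cylinder disappears at infinity at the critical curvature; since $\Sigma_{\infty,\infty}$ is an entire graph and hence $\Sigma_{\infty,\infty}^*$ an entire $\frac12$-graph (see~\cite{HRS}), the corresponding end diverges to $\partial_\infty\mathbb{H}^2$ and $d(a,\infty)=\rho(\infty,b)=\infty$.

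The main obstacle I anticipate is the first-contact comparison in the monotonicity step. Because the two surfaces live over possibly non-embedded domains and their even ends are only asymptotic (where $\theta'\to 0$ by Remark~\ref{rmk:complete-vertical}, so both limit onto $H$-cylinders), one must position them and control the ends so that the contact is genuinely interior or along a compact boundary piece, preventing it from escaping to infinity. This is precisely where the coherent orientation forced by $\theta'>0$, rather than mere continuity of the family, is indispensable.
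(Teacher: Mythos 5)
Your proposal has two genuine gaps, one at the heart of the monotonicity step and one in item (b). For monotonicity, you correctly identify the obstruction (conjugation is not order-preserving), but the device you propose to circumvent it --- a first-contact argument between $\Sigma_{a,b_1}^*$ and $\Sigma_{a,b_2}^*$ carried out in $\mathbb{H}^2\times\mathbb{R}$ --- is never actually constructed: you do not specify the continuous family being swept, you do not show that a first contact must occur at an interior point with matching orientations rather than on the boundary curves $\Gamma_i^*$ or escaping along the ends (you flag this yourself as ``the main obstacle''), and, most importantly, even granting that the two $H$-surfaces cannot touch, disjointness does not order the numbers $d(a,b_1)$ and $d(a,b_2)$: the implication ``no tangential contact $\Rightarrow$ the supporting cylinders are strictly ordered'' is asserted, not proved, so the injectivity on which your continuity-plus-boundary-values argument rests is missing. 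The paper's proof avoids the target space entirely: by \eqref{eqn:horizontal-components} the projection of the symmetry curve $\gamma_{a,b}^*$ to $\mathbb{H}^2$ has speed $|\nu|$, so $d(a,b)=\int_{\gamma_{a,b}}\nu_{a,b}$, where $\gamma_{a,b}=\overline{q_0q_2}$ and $\nu_{a,b}$ is the angle function of the \emph{original} minimal graph in $\mathbb{E}(4H^2-1,H)$. The Jenkins--Serrin graphs over the nested triangles are ordered by the maximum principle, the boundary (Hopf) maximum principle gives the strict pointwise inequality $\nu_{a_1,b}<\nu_{a_2,b}$ on $\gamma_{a_1,b}\setminus\{q_0\}$ (with Lemma~\ref{lem:slab} guaranteeing $\nu<1$ off $q_0$), and integrating yields strict monotonicity. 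The quantity to be compared is conjugation-invariant, so no comparison of the conjugate surfaces is needed at all.

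Second, your treatment of item (b) is circular. You justify $d(a,\infty)=\rho(\infty,b)=\infty$ by saying that the asymptotic $\frac12$-cylinders ``disappear at infinity'', but that is precisely what item (b) asserts; and the only fact you cite in support --- that $\Sigma_{\infty,\infty}$ is an entire graph, hence $\Sigma_{\infty,\infty}^*$ an entire $\frac12$-graph --- concerns the corner value $d_\infty=d(\infty,\infty)$, not $d(a,\infty)$ for finite $a$. Monotonicity and continuity can only give the upper bound $d(a,\infty)\leq d_\infty$, which is vacuous when $d_\infty=\infty$; they can never produce divergence. A genuine proof needs a lower bound, and this is where the paper uses its explicit machinery: for $k=2$ it identifies $\Sigma_{a,\infty}$ and $\Sigma_{\infty,b}$ with the horizontal helicoids $\mathcal{H}_\mu$, $|\mu|>\frac12$, and computes from \eqref{eqn:Hmu-angle} that the integral of the angle function along the relevant geodesic is $\int_0^\infty (1+a^2s^2)^{-1/2}\,\mathrm{d} s=\infty$; for $k\geq 3$ it traps the angle function of $\Sigma_{\infty,b}$ between two helicoidal barriers whose integrals diverge. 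Some quantitative input of this kind is unavoidable and is absent from your proposal.
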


\begin{proof}
Denote by $\gamma_{a,b}\subset\Sigma_{a,b}$ the horizontal geodesic joining $q_0$ and $q_2$ and by $\nu_{a,b}$ the angle function of $\Sigma_{a,b}$. Since the Jacobian of the projection of $\gamma_{a,b}$ to $\mathbb{H}^2$ is equal to $\nu_{a,b}$, see Equation~\eqref{eqn:horizontal-components}, we infer that $d(a,b)=\int_{\gamma_{a,b}}\nu_{a,b}$. Since $\gamma_{a,b}$ lies in the interior of the graph, $\nu_{a,b}$ does not vanish along $\gamma_{a,b}$, whereas the only point of $\gamma_{a,b}$ where $\nu_{a,b}$ equals $1$ is $q_0$ in view of Lemma~\ref{lem:slab} below.

If $0<a_1<a_2\leq\infty$, then $\Sigma_{a_1,b}$ lies above $\Sigma_{a_2,b}$ as a graph over $T_{a_1,b}$. The boundary maximum principle yields the strict inequality $0<\nu_{a_1,b}<\nu_{a_2,b}<1$ on $\gamma_{a_1,b}-\{q_0\}$. Since $\gamma_{a_1,b}\subseteq\gamma_{a_2,b}$, we deduce that
\[
  d(a_1,b)= \int_{\gamma_{a_1,b}} \nu_{a_1,b}<\int_{\gamma_{a_1,b}}\nu_{a_2,b}\leq\int_{\gamma_{a_2,b}} \nu_{a_2,b}=d(a_2,b).
\]
A similar argument shows that $b\mapsto d(a,b)$ is strictly increasing. If $0<H<\frac{1}{2}$, then using the Scherk minimal graph $\Sigma_{\infty,\infty}$ as a barrier from above, we get that $d(a,b)\leq d_\infty<+\infty$ for all $a,b\in(0,\infty]$, so the strict monotonicity also extends to the cases $a=\infty$ or $b=\infty$. The last assertion in item (a) follows by continuity.

Assume now that $H=\frac{1}{2}$. If $k=2$, then $\Sigma_{a,\infty}$ and $\Sigma_{\infty,b}$ belong to the family $\mathcal{H}_\mu$ with $|\mu|>\frac{1}{2}$. From Equation~\eqref{eqn:Hmu-angle}, we infer that the integral of the angle function of $\mathcal{H}_\mu$ along the $x$-axis is given by
\[
\int_{0}^\infty\frac{\mathrm{d} s}{\sqrt{1+a^2s^2}}=\infty,
\]
for some constant $a$ depending on $\mu$, and thus $d(a,\infty)=\rho(\infty,b)=\infty$. If $k\geq 3$ and $a=\infty$ (the case $b=\infty$ is similar), then $\Sigma_{\infty,b}$ has upper barrier (resp.\ lower barrier) a graph over the strip $[b\cos\frac\pi k,+\infty)\times[0,b \sin\frac\pi k]$ (resp.\ $[0,+\infty)\times[0,b \sin\frac\pi k]$) with boundary values $+\infty$ over $[b\cos\frac\pi k,+\infty)\times\{b \sin\frac\pi k\}$ (resp.\ $[0,+\infty)\times\{b \sin\frac\pi k\}$) and $0$ otherwise. These two barriers are congruent to an element of the family $\mathcal{H}_\mu$, whence the boundary maximum principle implies that the angle function of $\Sigma_{\infty,b}$ is bounded in between two functions whose integral is divergent along the $x$-axis by the above reasoning for the case $k=2$. Therefore, its integral along the $x$-axis also diverges and we conclude that $\rho(\infty,b)=\infty$.
\end{proof}

\begin{remark}
If $H=\frac{1}{2}$, then the equality $\rho(\infty,b)=\infty$ (resp.\ $d(a,\infty)=\infty$) reveals that the curves $\Gamma_{2i-1}^*$ (resp.\ $\Gamma_{2i}^*$) disappear at $\partial_\infty\mathbb{H}^2\times\{\pm\infty\}$ in the limit of $\Sigma_{a,b}^*$ when $a\to\infty$ (resp.\ $b\to\infty$). Note that Corollary~\ref{coro:JS} does not apply because $\Gamma_{2i-1}^*$ (resp.\ $\Gamma_{2i}^*$) is not actually an ideal vertical geodesic. Furthermore, if $\rho(\infty,b)<\infty$ or $d(a,\infty)<\infty$, then such surfaces would contradict Hauswirth, Rosenberg and Spruck's half-space theorem for $\frac{1}{2}$-surfaces~\cite{HRS}.

If $0<H<\frac{1}{2}$, then $d\in(0,d_\infty)$ can be taken as a parameter of the families of $(H,k)$-noids and $(H,k)$-nodoids. This parameter is also valid in the minimal case, being equal to the distance from $\{0\}\times\mathbb{R}$ to the asymptotic geodesic planes and ranging from $0$ to the apothem of an ideal geodesic $2k$-gon. This was proved by Mart\'\i n, Mazzeo and the third author for minimal (horizontal) catenoids in~\cite{MMR}.
\end{remark}

\subsection{The construction of saddle towers}

We will begin by discussing the case $a,b<\infty$ in which none of the curves $\Gamma_1^*,\dots,\Gamma_{2k}^*$ is ideal. The distance between the horizontal planes in $\mathbb{H}^2\times\mathbb{R}$ containing $\Gamma_{i}^*$ and $\Gamma_{i+1}^*$, for any $i$, is the hyperbolic distance $\ell$ between $p_1$ and $p_2$ in $\mathbb{H}^2(4H^2-1)$, given by~\eqref{eqn:ell}. Taking into account that the angle function $\nu=\langle N^*,\xi\rangle$ is positive and $N^*$ points to the exterior (resp.\ interior) of the domain $\Omega^*$ along $\Gamma_i^*$ if $i$ is odd (resp.\ even), we can assume after a vertical translation that $\Gamma_{2i}^*\subset\mathbb{H}^2\times\{0\}$ and $\Gamma_{2i-1}^*\subset\mathbb{H}^2\times\{-\ell\}$ for all $i\in\{1,\ldots,k\}$.

After successive reflections of $\Sigma_{a,b}^*$ about the slices $\mathbb{H}^2\times\{0\}$ and $\mathbb{H}^2\times\{-\ell\}$ we obtain a complete Alexandrov-embedded $H$-surface $\overline\Sigma_{a,b}^*$ invariant by a vertical translation of length $2\ell$, and this proves item (a) of Theorem~\ref{th3}.

\begin{remark}\label{rmk:rescaling}
For each $\ell>0$, Equation~\eqref{eqn:ell} reveals that there is a continuous curve of parameters $(a,b)$ for which the difference of heights of the boundary components of $\Sigma_{a,b}$ is $\ell$. The endpoints of this curve correspond to the parameters $(a,b)=(\ell,0)$ and $(a,b)=(0,\ell)$, not included in the family. It is easy to see that, after a suitable rescaling of the metric keeping $q_0$ fixed, the limit of $\overline\Sigma_{a,b}^*$ as $a\to 0$ for fixed $b>0$ (or $b\to0$ for fixed $a>0$) is a symmetric minimal $k$-noid of $\mathbb{R}^3$, which is non-embedded except for the case $k=2$ that we get a catenoid. That says that, for any $k\geq 3$ there exist non-embedded examples in the family. Note also that rescaling and conjugating commute since rescaling the metric in $\mathbb{E}(\kappa,\tau)$ by a factor $\lambda^2$ results in the transformation of parameters $(\kappa,\tau,H)\mapsto (\lambda^2\kappa,\lambda\tau,\lambda H)$.
\end{remark}

The rest of this section discusses questions related to the embeddedness of saddle towers. By the maximum principle with respect to horizontal slices coming from above, it follows that $\Sigma^*_{a,b}\subset\mathbb{H}^2\times(-\infty,0)$, whereas $\partial\Sigma^*_{a,b}$ lies at heights $0$ and $-\ell$. Hence, $\overline\Sigma_{a,b}^*$ is embedded if and only if $\Sigma^*_{a,b}$ is embedded and lies in the slab $\mathbb{H}^2\times(-\ell,0)$, i.e., above $\mathbb{H}^2\times\{-\ell\}$. On the one hand, $\Sigma_{a,b}^*$ is embedded when its boundary projects one-to-one to $\mathbb{H}^2$. Due to the symmetry of the surface, all boundary components $\Gamma_i^*$ of $\Sigma_{a,b}^*$ are embedded when the conjugate surface $R^*\subset\Sigma_{a,b}^*$ of the fundamental piece $R\subset\Sigma_{a,b}$ projecting onto $T_{a,b}$, is contained in a wedge of angle $\frac{\pi}{k}$. Hence, $\Sigma_{a,b}^*$ is embedded if and only if all the curves $\Gamma_i^*$ are embedded.

Lemma~\ref{lemma:rotation-curvature} yields the
embeddedness of the curves $\Gamma_{2i}^*$ when the interior angle of $\Omega_{a,b}$ at $p_2$ is at most $\pi$. In Section~\ref{sec:k-noids}, we will prove that in the limit case $a=\infty$ the curves $\Gamma_{2i-1}^*$ (at infinity) are embedded. Since the geodesic curvature of $\Gamma_{2i-1}^*$ is bounded and bigger than $2H$ by Equation~\eqref{eqn:theta}, Proposition~\ref{prop:continuity} ensures that $\Gamma_{2i-1}^*$ is embedded when $a$ is large enough.

On the other hand, Lemma~\ref{lem:slab} implies that $\Sigma_{a,b}^*$ lies in the slab $\mathbb{H}^2\times(-\ell,0)$ when $k=2$. It also lies in such slab if $H=0$ (the maximum principle can be also applied to horizontal slices coming from below, see \cite{MR,P}) but this is not true in general for any $H>0$ if $k\geq 3$, as we prove next.

\begin{lemma}\label{lem:slab} 
Fix $H\in(0,\frac{1}{2}]$ and $a,b\in(0,\infty)$. Let $\nu$ be the angle function of $\Sigma_{a,b}\subset\mathbb{E}(4H^2-1,H)$, that will be assumed positive in the interior of $\Sigma_{a,b}$. 
\begin{enumerate}[label=\emph{(\alph*)}]
  \item If $k=2$, then $\nu$ takes the value $1$ only at $q_0$.

  \item If $k\geq 3$, then $\nu$ only takes the value $1$ at $q_0$ and at $k$ points $\hat q_1,\ldots,\hat q_k$ such that $\hat q_i\in\overline{q_0q_{2i-1}}$ for all $i$.
\end{enumerate}
\end{lemma}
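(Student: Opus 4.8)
The plan is to translate the condition $\nu=1$ into the vanishing of the generalized gradient and then to count its zeros using the $D_k$-symmetry and the orientation of the twist. Since $\Sigma_{a,b}$ is the graph $F_u$ of a function $u$ over $\Omega_{a,b}$ in the cylinder model \eqref{eqn:graph-parametrization}, and the angle function of a graph is $\nu=(1+\|Gu\|^2)^{-1/2}$, the value $\nu=1$ is attained exactly at the zeros of $Gu=(u_x+\tau y\lambda^{-1})\partial_x+(u_y-\tau x\lambda^{-1})\partial_y$, where $\tau=H>0$. By symmetry it suffices to locate these zeros on the closed fundamental triangle $T_{a,b}=\{\,0\le\phi\le\tfrac{\pi}{k}\,\}$ (polar coordinates $(\rho,\phi)$ centered at $p_0$), where $u>0$ in the interior by the maximum principle, $u\equiv0$ on the two radial sides $\{\phi=0\}=\overline{p_0p_1}$ and $\{\phi=\tfrac{\pi}{k}\}=\overline{p_0p_2}$, and $u\to+\infty$ on $\overline{p_1p_2}$. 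Writing $Gu=0$ in polar form one obtains the two conditions
\begin{equation*}
u_\rho=0\qquad\text{and}\qquad u_\phi=\tau\lambda^{-1}\rho^2,
\end{equation*}
and the second equation already isolates the role of the orientation, since $\tau=H>0$ forces $u_\phi>0$ at every zero of $Gu$.

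Next I would exclude the even side and the open wedge. On $\{\phi=\tfrac{\pi}{k}\}$ the Hopf boundary point lemma gives $u_\phi<0$ (as $u>0$ for $\phi<\tfrac{\pi}{k}$ and $u=0$ on the side), contradicting $u_\phi=\tau\lambda^{-1}\rho^2>0$; hence $\overline{q_0q_2}$ carries no zero of $Gu$ besides $q_0$. For the open wedge the obstruction is the first condition: I would show that $u$ is strictly radially increasing, $u_\rho>0$ on $\{0<\phi<\tfrac{\pi}{k}\}$, so that $\{u_\rho=0\}$ collapses onto the two radial sides and no interior zero can occur. This radial monotonicity is consistent with the leading behaviour at $q_0$ described below and with $u\to+\infty$ along interior rays meeting $\overline{p_1p_2}$; establishing it rigorously — presumably by a maximum principle for the linearised operator satisfied by $u_\rho$, or by comparison with suitably rotated and rescaled rotationally invariant barriers (the umbrella $\mathcal{U}$ and the invariant surface $\mathcal{I}$ of Section~\ref{sec:minimal-graphs}) — is the \emph{main obstacle} of the argument.

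It then remains to count zeros on the odd side $\{\phi=0\}=\overline{q_0q_1}$, where $u_\rho\equiv0$ holds automatically and the problem reduces to the one–variable equation $\Psi(\rho):=u_\phi(\rho,0)-\tau\lambda^{-1}\rho^2=0$ on $(0,a)$. Because $u$ vanishes precisely along the $2k$ radial reflection axes, its leading term at $q_0$ is the harmonic polynomial $u\approx C\rho^k\sin(k\phi)$ with $C>0$, so $u_\phi(\rho,0)\approx Ck\rho^k$ and $\Psi(\rho)\approx Ck\rho^k-\tau\rho^2$ near $0$; at the other end $u_\phi(\rho,0)\to+\infty$ as $\rho\to a$, because the gradient blows up at the vertex $p_1$ where a $+\infty$ and a $-\infty$ edge meet, whence $\Psi\to+\infty$. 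For $k\ge3$ the term $-\tau\rho^2$ dominates near $0$, so $\Psi<0$ there and the intermediate value theorem produces a zero $\hat q_1\in\overline{q_0q_1}$; the $k$-fold symmetry then yields $\hat q_1,\dots,\hat q_k$ with $\hat q_i\in\overline{q_0q_{2i-1}}$, proving (b). For $k=2$ one has $\Psi(\rho)\approx(2C-\tau)\rho^2$ near $0$, and part (a) requires the strict curvature bound $2C>\tau$ together with $\Psi>0$ throughout $(0,a)$.

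The two genuinely delicate points — the radial monotonicity $u_\rho>0$ and the exact count of zeros of $\Psi$ (uniqueness for $k\ge3$ and positivity for $k=2$, i.e.\ the strict inequality $2C>\tau$) — are where the work concentrates; I expect the $k=2$ sign to follow by comparing $\Sigma_{a,b}$ with the invariant surface $\mathcal{I}$, whose angle function equals $1$ along an entire horizontal geodesic and thus realises the borderline case $\Psi\equiv0$. Throughout, the hypothesis $H>0$ is essential: it is precisely the positivity of $\tau$ that breaks the symmetry between the two radial sides, forcing $u_\phi>0$ at a zero, so that extra points with $\nu=1$ can only appear on the odd geodesics $\overline{q_0q_{2i-1}}$ and never on the even ones; for $H=0$ both sides behave alike and only $q_0$ survives.
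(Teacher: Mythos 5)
Your reduction is correct as far as it goes: $\nu=1$ is equivalent to $Gu=0$, the polar splitting of this condition into $u_\rho=0$ and $u_\phi=\tau\lambda^{-1}\rho^2$ is right, and the Hopf-lemma exclusion of the even side $\overline{q_0q_2}$ is a clean, correct argument (the paper rules out this side by a different, lengthier route, intersecting with an umbrella and invoking holonomy). Your intermediate-value argument on the odd side also mirrors the structure of the paper's existence proof for the points $\hat q_i$: there the sign $\Psi<0$ near $q_0$ for $k\geq3$ is obtained by comparing the orders of tangency at $q_0$ of $\Sigma_{a,b}$, the umbrella $\mathcal{U}_0$ and the invariant surface $\mathcal{I}$, followed by the boundary maximum principle, rather than via the expansion $u\approx C\rho^k\sin(k\phi)$ (which you would in any case need to justify, e.g.\ by a Hartman--Wintner/Bers type theorem). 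The problem is that the two points you yourself flag as delicate are not technical residue: they are the core of the lemma, and the tools you propose for them do not work.

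First, the interior case. Your plan hinges on proving $u_\rho>0$ in the open wedge, but there is no viable route to this: $u_\rho$ does not satisfy the linearised minimal-graph equation, because radial dilations (unlike rotations about the fiber through $q_0$) are not isometries of $\mathbb{E}(4H^2-1,H)$, so neither a maximum principle for ``the linearised operator satisfied by $u_\rho$'' nor a sliding argument is available; and barriers such as $\mathcal{U}$ or $\mathcal{I}$ control $u$ itself, not its radial derivative. The paper's interior argument is of a completely different nature: at an interior point $q$ with $\nu(q)=1$ one intersects the fundamental piece $R$ with the umbrella $\mathcal{U}_q$ centred at $q$; the intersection contains at least two curves through $q$ which cannot reach the $+\infty$ side (since $\mathcal{U}_q$ lies in a horizontal slab), cannot bound compact regions (maximum principle), and cannot end twice on the same vertical geodesic $\Gamma_1$ or $\Gamma_2$ (since $\mathcal{U}_q$ is an entire graph); the surviving configuration forces a closed horizontal geodesic polygon projecting one-to-one to $\mathbb{M}^2(4H^2-1)$, which is impossible precisely because the bundle curvature $H$ is nonzero (the holonomy property). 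This holonomy obstruction is the missing idea, and nothing in your proposal substitutes for it. Second, the zero count on the odd sides: your argument yields existence of $\hat q_i$ for $k\geq3$ but not uniqueness, and for $k=2$ the positivity of $\Psi$ on all of $(0,a)$ is only conjectured. The $k=2$ case can indeed be finished the way you expect: $u\geq u_{\mathcal{I}}$ by the maximum principle (they agree on the two radial sides and $u=+\infty$ on $\overline{p_1p_2}$), and Hopf then gives $u_\phi>(u_{\mathcal{I}})_\phi=\tau\lambda^{-1}\rho^2$ on the open side, i.e.\ $\Psi>0$; this is exactly the paper's argument. But uniqueness for $k\geq3$ again requires a nodal-set argument (in the paper: two points with $\nu=1$ on $\overline{q_0q_1}$ would produce two curves of $\mathcal{I}\cap R$ that must end at the same point of $\Gamma_2$, enclosing a compact region of $\mathcal{I}$, a contradiction), for which your one-variable function $\Psi$ gives no handle.
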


\begin{proof}
By symmetry, we will restrict ourselves to the closed fundamental region $R\subset\Sigma_{a,b}$ that projects onto $T_{a,b}$. Observe that $\nu(q_0)=1$ since $\Sigma_{a,b}$ contains at least two horizontal geodesics passing through $q_0$.  Let $q\in R$ be a point such that $\nu(q)=1$, $q\neq q_0$. We will distinguish three cases.

Firstly, suppose that $\pi(q)$ lies in the interior of $T_{a,b}$ and let us reach a contradiction by considering the umbrella $\mathcal{U}_q$ centered at $q$. Since $\mathcal{U}_q$ and $R$ are tangent at $q$, it follows that $\mathcal{U}_q\cap R$ is an equiangular system with at least two curves passing through $q$. As $\mathcal{U}_q$ is contained in a horizontal slab, none of these curves can arrive at a point projecting to the interior of $\overline{p_1p_2}$.  On the other hand, by the maximum principle, $\mathcal{U}_q\cap R$ cannot bound a compact subset in $\mathcal{U}_q$ or in the interior of $R$, because both surfaces are graphs. Moreover, if two curves contained in $\mathcal{U}_q\cap R$ starting at $q$ arrive at $\Gamma_1$ (or $\Gamma_2$), then they would share their endpoint as $\mathcal{U}_q$ is an entire graph, and they would bound a compact subset contained in $\mathcal{U}_q$, a contradiction. Thus, the curves have at most two endpoints in $\Gamma_1\cup\Gamma_2$, and hence (at least) two endpoints $x_1,x_2$ in $\overline{q_0q_1}\cup\overline{q_0q_2}$. The horizontal geodesics (contained in $\mathcal{U}_q$) from $q$ to $x_1$ and $x_2$, along with a subset of $\overline{q_0q_1}\cup\overline{q_0q_2}$, produce a horizontal geodesic triangle or quadrilateral projecting one-to-one to $\mathbb{H}^2(4H^2-1)$, which contradicts the holonomy property of horizontal curves, for the bundle curvature of $\mathbb{E}(4H^2-1,H)$ is not zero.

Secondly, we will assume that $q\in\overline{q_0q_2}$ and reach a contradiction again by using the umbrella $\mathcal{U}_q$. Because of the tangency of $\mathcal{U}_q$ and $\Sigma_{a,b}$, there is at least one curve in $\mathcal{U}_q\cap R$ emanating from $q$ and projecting (locally around $q$) to the interior of $T_{a,b}$. Since there cannot be enclosed compact regions in $\mathcal{U}_q$ or closed horizontal polygons as in the above first case, it is easy to see that such a curve must have an endpoint in $\partial R\cap\Gamma_1=\{(p_1,t):t\geq0\}$. This is a contradiction because the graph $\mathcal{U}_q$ (and hence the horizontal geodesic $\alpha$ joining $q$ and $\Gamma_1$) intersects $\Gamma_1$ at a point whose third coordinate is negative. This is a consequence of the holonomy property because the horizontal polygonal curve $\overline{q_1q_0}\cup\overline{q_0q}\cup\alpha$ projects to a triangle of $\mathbb{H}^2(4H^2-1)$ and produces a positive increment of the third coordinate when we travel on it leaving the interior triangle to our left.

Thirdly and lastly, let us suppose that $q\in\overline{q_0q_1}$. We consider the invariant surface~$\mathcal{I}$ with axis $\overline{q_0q_1}$ (see Section~\ref{sec:minimal-graphs}). If $k=2$, then both $R$ and $\mathcal{I}$ also contain $\overline{q_0q_2}$, and $R$ is above $\mathcal{I}$ on $T_{a,b}$. Since the angle function of $\mathcal{I}$ is constant $1$ along $\overline{q_0q_1}$, the boundary maximum principle ensures that the angle function of $R$ cannot be equal to $1$ on any interior point of $\overline{q_0q_1}$, and this finishes the proof of item (a).

As for the case $k\geq 3$, let $\eta$ be a horizontal unit vector along $\overline{q_0q_1}$ pointing towards the exterior of $T_{a,b}$ and consider the smooth function $\varphi=\langle N,\eta\rangle$ on $\overline{q_0q_1}$, i.e., the cosine of the angle between $N$ and $\eta$. Note that $\varphi(q_0)=0$ and $\lim_{q\to q_1}\varphi(q)=1$.
\begin{itemize}
  \item First, we will prove that $\varphi<0$ on a punctured neighborhood of $q_0$. Therefore, the continuity of $\varphi$ implies that there is an interior point $\hat q$ of $\overline{q_0q_1}$ with $\varphi(\hat q)=0$, which is equivalent to the condition $\nu(\hat q)=1$.

  Observe that $\Sigma_{a,b}$, the invariant surface $\mathcal{I}$ and the umbrella $\mathcal{U}_0$ centered at the origin $q_0$ are three minimal surfaces tangent to each other at $q_0$. On the one hand, $\Sigma_{a,b}\cap \mathcal{U}_0$ contains at least the $k$ horizontal geodesics $\overline{q_i q_{k+i}}$, so all derivatives of $\Sigma_{a,b}$ and $\mathcal{U}_0$ (as graphs) coincide at $p_0$ up to order at least $k-1$. On the other hand, the second derivatives of $\mathcal{I}$ and $\mathcal{U}_0$ do not agree at $p_0$ since $\mathcal{I}\cap\mathcal{U}_0$ consists of just two curves meeting at $q_0$. Since $k\geq 3$, this implies that $\Sigma_{a,b}$ and $\mathcal{I}$ agree at $p_0$ only up to the     first derivatives, so $\Sigma_{a,b}\cap \mathcal{I}$ consists of two orthogonal curves meeting at $q_0$ (in a neighborhood of $q_0$). Since $T_{a,b}$ has angle $\frac\pi k<\frac\pi 2$ at $p_0$, we deduce that $\mathcal{I}$ lies above $\Sigma_{a,b}$ as graphs over the interior of $T_{a,b}$ in a neighborhood of $q_0$. This enables the comparison of $\mathcal{I}$ and $R$ along their common boundary $\overline{q_0q_1}$ near $q_0$ using the     boundary maximum principle. Since $\mathcal{I}$ has constant angle function $1$ along $\overline{q_0q_1}$, we conclude that $\varphi(q)<0$ when $q\in\overline{q_0q_1}$ is close to $q_0$. 

  \item We will now show that the point $\hat q$ is unique. Assume by contradiction that there are two distinct points $\hat q,\hat q'\in\overline{q_0q_1}-\{q_0\}$ with $\nu(\hat q)=\nu(\hat q')=1$. Therefore, there are two curves in $\mathcal{I}\cap R$ projecting to the interior of $T_{a,b}$ and emanating from $\hat q$ and $\hat q'$, respectively. Such curves cannot reach a point projecting to $\overline{p_1p_2}$, as $\mathcal{I}$ is an entire graph, and they do not enclose an interior compact region in $\mathcal{I}$ because of the maximum principle using translated copies of $\mathcal{I}$, so they must reach the vertical fiber $\Gamma_2$. Using that $\mathcal{I}$ is an entire graph once again, we deduce that both curves must have the same endpoint in $\Gamma_2$, so $\mathcal{I}\cap R$ encloses a compact region in $\mathcal{I}$, which is a contradiction.\qedhere
  \end{itemize}
\end{proof}

\begin{remark}\label{rmk:height}
Points of $\Sigma_{a,b}^*$ with $\nu=1$ are the critical points of its height function~$h$ given by the projection to the factor $\mathbb{R}$, which follows from the identity $\|\nabla h\|^2=1-\nu^2$. The conjugate point $q_0^*$ of $q_0$ is a local minimum of $h$ if $k\geq 3$ or a saddle point if $k=2$, see Lemma~\ref{lem:slab}. In the case $k\geq 3$, the conjugate points $\hat q_1^*,\ldots,\hat q_k^*$ of the points $\hat q_1,\ldots,\hat q_k$ given by Lemma~\ref{lem:slab} are saddle points of $h$. This differs with the case $H=0$, where the only critical point of $h$ is $q_0$ for all $k\geq 2$, see~\cite{MR,P}.
\end{remark}

\begin{proposition}\label{prop:slab}
Let $0<H\leq\frac{1}{2}$. 
\begin{enumerate}[label=\emph{(\alph*)}]
  \item If $k=2$, the interior of $\Sigma_{a,b}^*$ lies in the slab $\mathbb{H}^2\times(-\ell,0)$ for all $a,b\in(0,\infty)$.

  \item For each $\ell>0$, there exist (infinitely many) $k\geq 3$ and $a,b\in(0,\infty)$ satisfying~\eqref{eqn:ell} such that $\Sigma_{a,b}^*\not\subset\mathbb{H}^2\times[-\ell,0]$.
\end{enumerate}
\end{proposition}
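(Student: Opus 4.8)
The unifying tool will be the height function $h$ on $\Sigma_{a,b}^*$ (the projection to the $\mathbb{R}$ factor), whose critical points are, by Remark~\ref{rmk:height}, exactly the points where $\nu=1$; these are classified by Lemma~\ref{lem:slab} and their nature (saddle or local minimum of $h$) is recorded in Remark~\ref{rmk:height}. Recall that the curves $\Gamma_{2i}^*$ and $\Gamma_{2i-1}^*$ lie at heights $0$ and $-\ell$, that consecutive boundary curves are joined by ideal vertical geodesics spanning $[-\ell,0]$ (Corollary~\ref{coro:JS}), that the interior of $\Sigma_{a,b}^*$ is already known to lie in $\mathbb{H}^2\times(-\infty,0)$, and that the surfaces are proper, so a divergent interior sequence accumulates only on the ideal boundary, where $h\in[-\ell,0]$.

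For item (a), Lemma~\ref{lem:slab}(a) shows that when $k=2$ the only critical point of $h$ is $q_0^*$, which Remark~\ref{rmk:height} identifies as a saddle; hence $h$ has no interior local extremum. I would then argue that $\inf h\geq-\ell$: otherwise the infimum, not being attained on the ideal boundary (where $h\geq-\ell$) nor along a divergent sequence (by properness), would be realized at an interior local minimum, i.e.\ a critical point, contradicting that the unique critical point is a saddle. The same reasoning rules out an interior point with $h=-\ell$ (it would be an interior minimum), and combined with the known upper bound $h<0$ in the interior this places the interior of $\Sigma_{a,b}^*$ in the open slab $\mathbb{H}^2\times(-\ell,0)$.

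For item (b), Remark~\ref{rmk:height} now gives that $q_0^*$ is a local minimum of $h$ when $k\geq 3$, so it suffices to exhibit admissible parameters with $h(q_0^*)<-\ell$. The curve $\overline{q_0q_2}^*$ is the conjugate of the horizontal geodesic $\overline{q_0q_2}$ of length $b$; along it $\nu<1$ except at $q_0$ by Lemma~\ref{lem:slab}(b), so $h$ is strictly monotone from the local minimum $q_0^*$ up to the endpoint on $\Gamma_2^*$ at height $0$. Using~\eqref{eqn:horizontal-components} this gives $h(q_0^*)=-\int_{\overline{q_0q_2}^*}\sqrt{1-\nu^2}$, and since $\sqrt{1-\nu^2}\geq 1-\nu$ with $\int_{\overline{q_0q_2}^*}\nu=d(a,b)$ (as in the proof of Lemma~\ref{lemma:distance-monotonicity}), I obtain $-h(q_0^*)\geq b-d(a,b)$. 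It then remains to find admissible $(k,a,b)$, for fixed $\ell$, with $b-d(a,b)>\ell$.

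The main obstacle is precisely this last comparison, and it is here that the hypothesis $H>0$ enters. For fixed $\ell$, Equation~\eqref{eqn:ell} permits $b$ to be taken arbitrarily large as $k\to\infty$: the maximal admissible value satisfies $\sinh(b_{\max}\delta)\sin\tfrac\pi k=\sinh(\ell\delta)$ with $\delta=\sqrt{1-4H^2}$, so $b_{\max}$ grows like $\tfrac1\delta\log k$; meanwhile $d(a,b)<d_\infty$ grows only like $\log k$, since $d_\infty$ is a distance in $\mathbb{H}^2$ between ideal vertices at angular separation $\tfrac\pi k$. As $\tfrac1\delta>1$ exactly when $H>0$, one can fix $k$ large and pick $b\in(\ell+d_\infty,b_{\max})$, forcing $b-d(a,b)>\ell$ and hence $h(q_0^*)<-\ell$; this yields examples for infinitely many $k$. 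For $H=\tfrac12$ one has $d_\infty=\infty$, but now \eqref{eqn:ell} is the Euclidean law of cosines, $b_{\max}=\ell/\sin\tfrac\pi k$ grows linearly in $k$, while the explicit angle function~\eqref{eqn:Hmu-angle} of the barrier helicoids shows $d(a,b)$ grows only logarithmically in $b$, so again $b-d(a,b)>\ell$. Making the estimate on $d_\infty$ (respectively on the logarithmic growth of $d$ when $H=\tfrac12$) quantitative is the technical heart of the proof, and it is exactly the place where the minimal case $H=0$, in which $\tfrac1\delta=1$ and the difference stays bounded, is excluded.
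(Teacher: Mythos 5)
Your proof of item (a) is circular. To rule out an interior minimum you invoke Remark~\ref{rmk:height}, which asserts that $q_0^*$ is a saddle point of $h$ when $k=2$; but that assertion is never proved independently in the paper: the Remark only points back to Lemma~\ref{lem:slab}, which identifies $q_0$ as the unique point with $\nu=1$ and says nothing about the \emph{nature} of that critical point. In fact the saddle property is essentially equivalent to what you are trying to prove: by~\eqref{eqn:horizontal-components}, $h$ is strictly monotone along each of the four conjugate symmetry rays issuing from $q_0^*$ (there $|h'|=\sqrt{1-\nu^2}$ vanishes only at $q_0^*$), and these rays end on $\Gamma_1^*,\Gamma_2^*,\Gamma_3^*,\Gamma_4^*$ at heights $-\ell,0,-\ell,0$; hence $q_0^*$ is a saddle precisely when $h(q_0^*)>-\ell$, which is item (a) at $q_0^*$. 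The paper supplies the missing step without the Remark: assuming the interior leaves the slab, the interior local minimum must be global and located at $q_0^*$; then $h\circ\gamma^*$ along the conjugate $\gamma^*$ of $\overline{q_0q_1}$ runs from this global minimum to $-\ell$ with negative derivative near $q_1^*$ (because $N^*$ points out of $\Omega^*$ along $\Gamma_1^*$), so it has an interior critical point, which by orthogonality of $\Sigma_{a,b}^*$ to the vertical plane along $\gamma^*$ is a second critical point of $h$, contradicting Lemma~\ref{lem:slab}. Some substitute for this argument is needed to make your item (a) correct.

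For item (b) you take a genuinely different, quantitative route, and its skeleton is attractive: the identity $h(q_0^*)=-\int_0^b\sqrt{1-\nu^2}$ along the conjugate of $\overline{q_0q_2}$, the bound $-h(q_0^*)\geq b-d(a,b)$, and the law-of-sines formula $\sinh(b_{\max}\delta)\sin\frac{\pi}{k}=\sinh(\ell\delta)$, giving $b_{\max}\sim\frac1\delta\log k$, are all correct. However, the two estimates on which everything hinges are asserted rather than proved, and you concede as much (``the technical heart''). For $0<H<\frac12$ you need $d_\infty\leq\log k+O(1)$ (or at least $d_\infty\leq b_{\max}-\ell-O(1)$); this requires controlling the angular sizes of the $2k$ ideal arcs of the conjugate Scherk domain $\Omega^*_{\infty,\infty}$ as $k\to\infty$ (a priori they could cluster, making the apothem much larger than $\log k$), and nothing in the paper provides this. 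For $H=\frac12$, where $d_\infty=\infty$, the claim that helicoid barriers give $d(a,b)=O(\log b)$ does not follow from a direct maximum-principle comparison: $\Sigma_{a,b}$ takes the value $+\infty$ on $\overline{p_1p_2}$, so it is not dominated by any helicoid $\mathcal{H}_\mu$ with axis along $\overline{p_0p_2}$, and for $k\geq3$ the triangle $T_{a,b}$ is not even contained in a strip with an edge on that axis. The paper sidesteps all quantitative estimates with a soft compactness argument: for $k_n=2^n$ and $a_n,b_n\to\infty$ satisfying~\eqref{eqn:ell} with $\ell$ fixed, the graphs $\Sigma_n$ converge to the umbrella centered at $q_0$ (they acquire a dense family of horizontal symmetry axes), so by continuity of the conjugation (Proposition~\ref{prop:continuity}) the surfaces $\Sigma_n^*$ converge to a rotational entire $H$-graph of unbounded height and eventually leave the slab. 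If you supplied the two missing estimates your argument would in fact give more than the paper's (explicit parameters for every large $k$, plus a lower bound on how far $\Sigma_{a,b}^*$ exits the slab), but as written both items have genuine gaps.
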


\begin{proof}
Let $k=2$ and assume by contradiction that the interior of $\Sigma_{a,b}^*$ escapes the open slab $\mathbb{H}^2\times(-\ell,0)$. Since $\partial\Sigma_{a,b}^*\subset\mathbb{H}^2\times\{-\ell,0\}$ and $\Sigma_{a,b}^*\subset\mathbb{H}^2\times(-\infty,0)$, we get there is some interior point $q^*\in\Sigma_{a,b}^*$ where the height function $h$ attains a local minimum. From Lemma~\ref{lem:slab}, the only possibility is $q^*=q_0^*$ and the minimum is global (see Remark~\ref{rmk:height}). Let $\gamma:[0,a]\to\overline{q_0q_1}\subset \Sigma_{a,b}$ be a parametrization with $\gamma(0)=q_0$ and $\gamma(a)=q_1$ and $\gamma^*$ its conjugate curve in $\Sigma_{a,b}^*$. Then $h\circ\gamma^*:[0,a]\to\mathbb{R}$ runs from the global minimum to $-\ell$, and its derivative is negative in a neighborhood of $a$ (as $q_1^*$ lies on $\Gamma_1^*$ where $N^*$ points outside $\Omega^*$). This yields the existence of another critical point of $h\circ\gamma^*$ in $(0,a)$. Since $\Sigma_{a,b}^*$ is orthogonal to a vertical plane along $\gamma^*$ because $\gamma$ is a horizontal geodesic, we infer that the critical point of $h\circ\gamma^*$ is an interior critical point of $h$, a contradiction.

As for item (b), consider a sequence of surfaces $\Sigma_n=\Sigma_{a_n,b_n}$, where $a_n,b_n\in(0,+\infty)$ diverge and satisfy~\eqref{eqn:ell} for $k_n=2^n$ (so that $a,b$ and $k$ appearing in~\eqref{eqn:ell} now depend on $n$ but $\ell$ remains fixed). Note that $q_0\in\Sigma_n$ for all $n$ and $\dist(q_0,\partial\Sigma_n)$ diverges since $k_n$ is increasing whilst $\ell$ is constant. In particular, $\Sigma_n\subset\mathbb{E}(4H^2-1,H)$ is a sequence of minimal graphs with a common point $q_0$. Since graphs are stable, the surfaces $\Sigma_n$ have uniformly bounded second fundamental form, and standard convergence arguments imply the existence of a subsequence (also denoted by $\Sigma_n$) that converges uniformly on compact subsets in the $\mathcal{C}^m$-topology (for all $m$) to a complete minimal surface $\Sigma_\infty\subset\mathbb{E}(4H^2-1,H)$. Since all the $\Sigma_n$ are graphs, then $\Sigma_\infty$ is either a multigraph or a vertical plane, but the latter can be ruled out because the angle functions at $q_0$ equal $1$ along the sequence. As $\Sigma_\infty$ is a complete multigraph, then it was proved in~\cite{ManR} that it must be a graph. For each $n_0\in\mathbb{N}$, there is $n_1\in\mathbb{N}$ such that all the surfaces $\Sigma_n$ for $n\geq n_1$ share the same $2^{n_0}$ horizontal axes of symmetry, all of them with a common point $q_0$. This implies that $\Sigma_\infty$ has also $2^{n_0}$ horizontal axes of symmetry for all $n_0\in\mathbb{N}$, and hence a dense family of such axes, whence $\Sigma_\infty$ is foliated by horizontal geodesics going through $q_0$, i.e., $\Sigma_\infty$ is the umbrella centered at $q_0$. Thus, its conjugate surface $\Sigma_\infty^*\subset\mathbb{H}^2\times\mathbb{R}$ is a rotationally invariant entire $H$-graph, whose height is unbounded. Therefore, there must be some $n_2\in\mathbb{N}$ such that $\Sigma_n^*$ is not contained in the slab $\mathbb{H}^2\times[-\ell,0]$ for any $n\geq n_2$.
\end{proof}

\subsection{The construction of $(H,k)$-noids}\label{sec:k-noids}

We will now treat the case $a=\infty$ and $b\in(0,\infty)$. This means that for any $i\in\{1,\ldots,k\}$, the vertex $p_{2i-1}$ of $\Omega_{\infty,b}$ becomes ideal, and hence $\Gamma_{2i-1}$ is an ideal vertical geodesic (when $0<H<\frac 12$).

Since $\Sigma_{\infty,b}^*$ is the limit of the surfaces
$\Sigma_{a,b}^*$ as $a\to\infty$, we get that all the curves $\Gamma_{2i}^*$ lie in the same horizontal slice, that will be assumed to be $\mathbb{H}^2\times\{0\}$. We can then extend $\Sigma_{\infty,b}^*$ by mirror symmetry about $\mathbb{H}^2\times\{0\}$ to obtain a proper Alexandrov-embedded complete $H$-surface $\overline\Sigma_{\infty,b}^*$. These surfaces will be called \emph{$(H,k)$-noids} (or \emph{$H$-catenoids} if $k=2$).

If $0<H<\frac{1}{2}$, the asymptotic boundary of $\Sigma_{\infty,b}^*$ consists of $2k$ ideal vertical half-lines in $\partial_\infty\mathbb{H}^2\times\mathbb{R}$ together with the $k$ complete curves $\Gamma_{2i-1}^*\subset\mathbb{H}^2\times\{-\infty\}$.  Therefore, $\Gamma_{2i-1}^*$ projects to a curve of constant geodesic curvature $-2H$ in $\mathbb{H}^2$ with respect to $N^*$, that points towards the interior of $\Omega^*$, whence $\overline\Sigma^*_{\infty,b}$ has $k$ ends asymptotic to vertical $H$-cylinders and the surface lies
(locally) in their concave side (the case $k=2$ is shown in
Figure~\ref{Fig-Catenoids}, left).

\begin{figure}
  \begin{center}
    \includegraphics[width=0.95\textwidth]{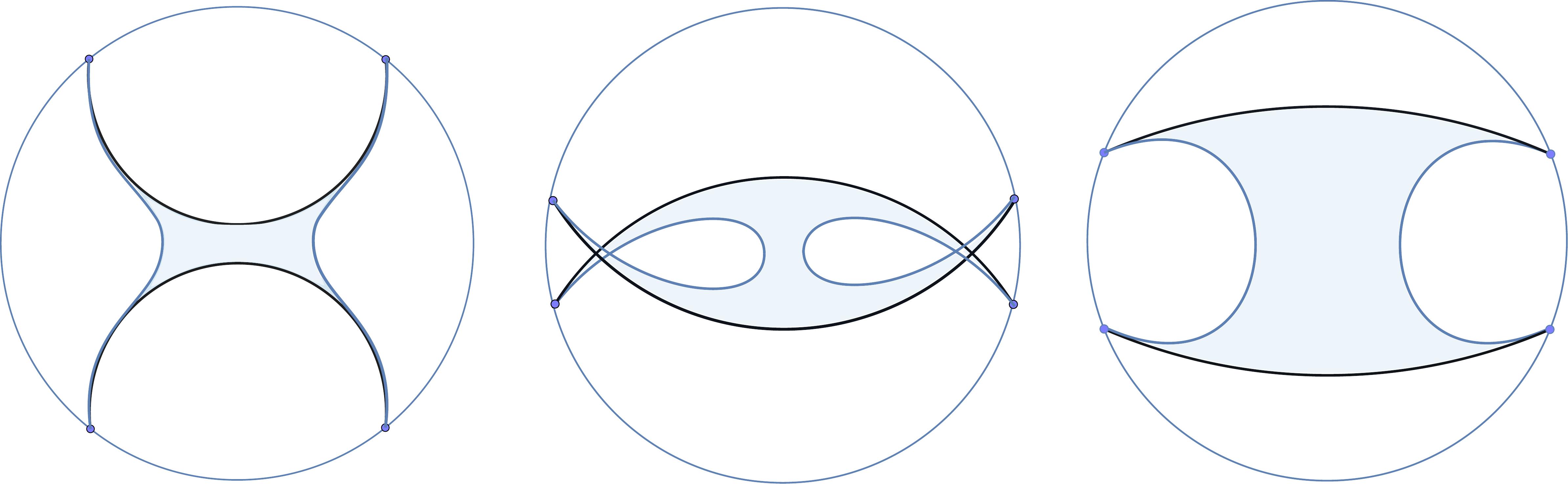}
  \end{center}
  \caption {Sketch of the domain of a $H$-catenoid (left), a
    non-embedded $H$-catenodoid (center), and an embedded
    $H$-catenodoid (right), with $0<H<\frac{1}{2}$.}
\label{Fig-Catenoids}
\end{figure}

The case $H=\frac{1}{2}$ is similar, but the curves $\Gamma_{2i-1}^*$ do not exist by Lemma~\ref{lemma:distance-monotonicity} (we can think that they go to a point as $H$ goes to $\frac 12$). Item (b) of Theorem~\ref{th3} will be proved after analyzing the nature of the ends when $H=\frac12$. If $k=2$, then $\Sigma_{\infty,b}$ belong to the family $\mathcal{H}_\mu$ for $\mu>\frac{1}{2}$ and $\mu<\frac{-1}{2}$, respectively. Equations~\eqref{eqn:theta} and~\eqref{eqn:Hmu-theta} show that $\Gamma_{1}^*$ has geodesic curvature 
\begin{equation}\label{eqn:kg-critical}
  \kappa_g=2H-\theta'_1=1+\frac{4\mu(1+2\mu)^2}{16\mu^2+(1+2\mu)^4s^2}.
\end{equation}

Although Lemma~\ref{lemma:rotation-curvature} already implies the embeddedness of $\frac{1}{2}$-catenoids ($\mu<\frac{-1}{2}$), Equation~\eqref{eqn:kg-critical} allows us to give a precise numerical picture of their domain as $\frac{1}{2}$-bigraphs (see Figure~\ref{fig:critical-catenoid}, left). The case $k\geq 3$ is not explicit, but the asymptotic behavior should be similar because $\Sigma_{\infty,b}$ lies above the graph $\Sigma_-$ over the strip $[b\cos\frac\pi k,+\infty)\times[0,b \sin\frac\pi k]$ with boundary values $+\infty$ over $[b\cos\frac\pi k,+\infty)\times\{b \sin\frac\pi k\}$ and $0$ otherwise, and below a vertical translation $\Sigma_+$ of $\Sigma_-$ by a certain finite distance. The boundary maximum principle implies that the angle of rotation of $\Sigma_{\infty,b}$ along $\Gamma_2$ lies between the angles of rotation of $\Sigma_-$ and $\Sigma_+$ (which differ in a translation of the parameter) and hence they are asymptotically equivalent.

\begin{figure}
\begin{center}
\includegraphics[width=0.67\textwidth]{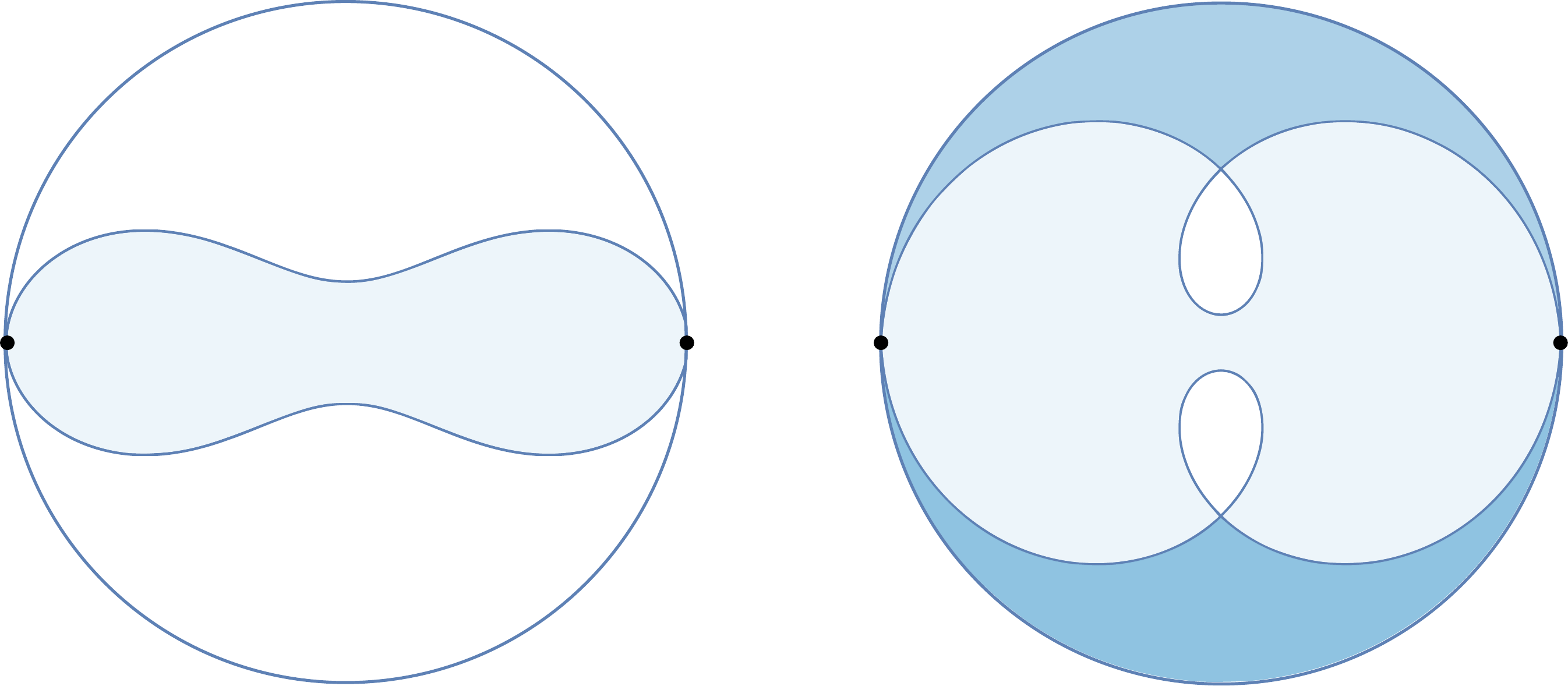}
\end{center}
\caption{Numerical representation of the domains of $\frac{1}{2}$-catenoids (left) and $\frac{1}{2}$-catenodoids (right), corresponding to $\mathcal{H}_\mu$ with $\mu=-3$ and $\mu=3$, respectively. The darker region is covered twice and illustrates why $\frac{1}{2}$-catenodoids are never  embedded.}\label{fig:critical-catenoid}
\end{figure}

As for embeddedness in general (for any $H\in(0,\frac{1}{2}]$ and $k\geq 2$), the maximum principle using horizontal slices yields $\Sigma_{\infty,b}^*\subset\mathbb{H}^2\times(-\infty,0)$. Hence, $\overline\Sigma_{\infty,b}^*$ is embedded if and only if $\Sigma_{\infty,b}^*$ is embedded. Since each curve $\Gamma_{2i-1}^*$ is embedded and $\Sigma_{\infty,b}^*$ is a multigraph, we deduce (again using the symmetry of the surface, as in the above case of saddle towers) that $\overline\Sigma_{\infty,b}^*$ is embedded if and only if the curves $\Gamma_{2i}^*$ are embedded. By Lemma~\ref{lemma:rotation-curvature}, this holds true if the interior angle of $\Omega_{\infty,b}$ at $p_2$ is at most $\pi$. If $0<H<\frac{1}{2}$ and the angle at $p_2$   is $\pi$, basic hyperbolic geometry yields the value $b=\frac{1}{\sqrt{1-4H^2}}\arccosh(\frac{1}{\sin\frac{\pi}{k}})$, which is equal to the distance from $p_0$ to the intersection point $\overline{p_1p_3}\cap\overline{p_0p_2}$. If $H=\frac12$ and $k>2$, then no such value for $b$ exists. The following result summarizes this discussion (observe that this is still not the announced counterexample to the Krust-type property because $\Omega$ is convex under the given assumptions):
\begin{proposition}\label{prop:knoids-embeddedness}~
\begin{enumerate}[label=\emph{(\alph*)}]
  \item If $0<H<\frac{1}{2}$ and
    $b\geq\frac{1}{\sqrt{1-4H^2}}\arccosh(\frac{1}{\sin\frac{\pi}{k}})$,
    then $\overline\Sigma_{\infty,b}^*$ is embedded.
  \item If $0<H\leq\frac{1}{2}$ and $k=2$, then
    $\overline\Sigma_{\infty,b}^*$ is embedded for all
    $b\in(0,\infty)$.
  \end{enumerate}
  \end{proposition}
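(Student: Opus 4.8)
The plan is to reduce embeddedness to a single scalar inequality and then settle it by hyperbolic trigonometry. First I would invoke the reduction already carried out above: since $\Sigma_{\infty,b}^*\subset\mathbb{H}^2\times(-\infty,0)$ is a multigraph whose curves $\Gamma_{2i-1}^*$ are embedded, the $k$-fold symmetry shows that $\overline\Sigma_{\infty,b}^*$ is embedded if and only if each finite-height curve $\Gamma_{2i}^*$ is embedded; and since $\theta_{2i}'>0$ (item (1) of this section), Lemma~\ref{lemma:rotation-curvature}(b) gives this whenever $\int_{\Gamma_{2i}}\theta'\le\pi$. The crucial point is then to identify $\int_{\Gamma_{2i}}\theta'$ with the interior angle of $\Omega_{\infty,b}$ at the finite vertex $p_{2i}$, namely $2\beta(b)$, where $\beta(b)$ denotes the angle of $T_{\infty,b}$ at $p_2$. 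To see this I would use the mirror symmetry of $\Sigma_{\infty,b}$ about the horizontal geodesic through $q_0$ and $q_{2i}$, which fixes $\Gamma_{2i}$ and exchanges its halves $z\gtrless0$: on $z\ge0$ the horizontal projection of $N$ turns from the direction orthogonal to $\overline{p_0p_2}$ at $q_{2i}$ to the direction orthogonal to the asymptotic edge $\overline{p_1p_2}$ as $z\to+\infty$, a net rotation of $\beta$, and symmetry doubles it to $2\beta$.

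For item (a) it then remains to prove $2\beta(b)\le\pi$ for $b\ge b_0:=\frac{1}{\sqrt{1-4H^2}}\arccosh\frac{1}{\sin(\pi/k)}$. Writing $\delta=\sqrt{1-4H^2}$ and applying the dual hyperbolic law of cosines to $T_{\infty,b}$ (ideal vertex $p_1$, angle $\tfrac\pi k$ at $p_0$, and $\dist(p_0,p_2)=b$) gives
\begin{equation*}
1=-\cos(\tfrac\pi k)\cos\beta+\sin(\tfrac\pi k)\sin\beta\cosh(\delta b),
\end{equation*}
from which $\beta=\tfrac\pi2$ holds exactly at $b=b_0$. Implicit differentiation shows $\frac{d\beta}{db}<0$ for $\beta\in(0,\tfrac\pi2)$, since there the numerator $\sin(\tfrac\pi k)\sin\beta\,\delta\sinh(\delta b)$ and the denominator $\cos(\tfrac\pi k)\sin\beta+\sin(\tfrac\pi k)\cos\beta\cosh(\delta b)$ of $-\frac{d\beta}{db}$ are positive; together with $\beta(b_0)=\tfrac\pi2$ this yields $\beta(b)\le\tfrac\pi2$, i.e.\ $\int_{\Gamma_{2i}}\theta'=2\beta(b)\le\pi$, for every $b\ge b_0$. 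Lemma~\ref{lemma:rotation-curvature}(b) (allowing equality at $b_0$) then gives embeddedness.

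Item (b) I would split according to $H$. If $0<H<\tfrac12$ and $k=2$, then $\sin(\tfrac\pi k)=1$ forces $b_0=\tfrac1\delta\arccosh1=0$, so item (a) already gives embeddedness for every $b\in(0,\infty)$. If $H=\tfrac12$ (so $\kappa=0$) and $k=2$, then $\Sigma_{\infty,b}$ is a piece of a horizontal helicoid $\mathcal{H}_\mu$ with $\mu<-\tfrac12$ by Lemma~\ref{lemma:JS-nil}; from Equation~\eqref{eqn:Hmu-theta}, $\theta'(s)=-\sigma/(1+\sigma^2s^2)$ along $\Gamma_2$ with $\sigma=(1+2\mu)^2/(4\mu)<0$, whence $\int_{\Gamma_2}\theta'=\int_{-\infty}^{\infty}\frac{-\sigma}{1+\sigma^2s^2}\,\mathrm{d} s=\pi$, and Lemma~\ref{lemma:rotation-curvature}(b) again applies with equality.

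The step I expect to be most delicate is the angle identification $\int_{\Gamma_{2i}}\theta'=2\beta$: one must verify that the net rotation of the horizontal projection of $N$ is the interior angle and not its supplement, which requires careful bookkeeping of the orientation of $N$ at the two ends of the complete vertical geodesic $\Gamma_{2i}$ together with the fact, from Remark~\ref{rmk:complete-vertical}, that $\theta'\to0$ there. A reassuring consistency check is that this identification returns $\int\theta'=\pi$ precisely when $\beta=\tfrac\pi2$, matching the explicit helicoid computation in item (b). Once the identification is secured, both the monotonicity in (a) and the integral in (b) are routine.
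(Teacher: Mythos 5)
Your proof is correct and follows essentially the same route as the paper's own argument (the discussion in Section~\ref{sec:k-noids} preceding the proposition): reduce embeddedness of $\overline\Sigma_{\infty,b}^*$ to embeddedness of the curves $\Gamma_{2i}^*$, apply Lemma~\ref{lemma:rotation-curvature}(b) via the criterion that the interior angle of $\Omega_{\infty,b}$ at $p_2$ be at most $\pi$, determine the threshold value of $b$ by hyperbolic trigonometry, and, for $H=\frac12$ and $k=2$, use the helicoid formula~\eqref{eqn:Hmu-theta} to get total rotation exactly $\pi$. The only difference is one of detail: you make explicit the identification $\int_{\Gamma_{2i}}\theta'=2\beta(b)$ and the monotonicity of $\beta(b)$, which the paper compresses into ``basic hyperbolic geometry'' and the reference to Lemma~\ref{lemma:rotation-curvature}.
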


\subsection{The construction of $(H,k)$-nodoids}\label{sec:k-nodoids}

We will finally deal with the case $b=\infty$ keeping $a<\infty$, which leads to the \emph{$(H,k)$-nodoids} (or \emph{$H$-catenodoids} if $k=2$). Recall that these surfaces are not congruent to $(H,k)$-noids because we are assuming that $H\neq 0$ (see Remark~\ref{rmk:minimal-case}).

Building upon the framework given at the beginning of
Section~\ref{sec:constructions}, the boundary of the minimal graph $\Sigma_{a,\infty}\subset\mathbb{E}(4H^2-1,H)$ consists of the vertical geodesics $\Gamma_{2i-1}$, whereas the ideal vertical geodesics $\Gamma_{2i}$ lie in its asymptotic boundary (if $0<H<\frac 12$). The boundary of the conjugate $H$-multigraph $\Sigma_{a,\infty}^*\subset\mathbb{H}^2\times\mathbb{R}$ is made up of the horizontal symmetry curves $\Gamma_{2i-1}^*$ that can be assumed to be contained in $\mathbb{H}^2\times\{0\}$ up to a vertical translation. As curves of this slice, they satisfy $\kappa_g>2H$ with respect to $N^*$, that points to the exterior of the domain $\Omega^*$
of the multigraph.

If $0<H<\frac{1}{2}$, then the asymptotic boundary of $\Sigma_{a,\infty}^*$ also contains the $k$ ideal curves $\Gamma_{2i}^*\subset\mathbb{H}^2\times\{+\infty\}$ whose projections onto $\mathbb{H}^2$ have constant geodesic curvature $2H$ with respect to the conormal $N^*$, which points inside $\Omega^*$. In other words, the surface $\overline\Sigma_{a,\infty}^*$ obtained from $\Sigma_{a,\infty}^*$ by a mirror symmetry about $\mathbb{H}^2\times\{0\}$ lies locally in the convex part of $\pi^{-1}(\Gamma_{2i}^*)$ in contrast to the case of $(H,k)$-noids (in Figure~\ref{Fig-Catenoids} the case $k=2$ has been depicted). If $H=\frac{1}{2}$, then the asymptotic horocycles $\Gamma_{2i}^*$ disappear at infinity by Lemma~\ref{lemma:distance-monotonicity}, and item (c) of Theorem~\ref{th3} is proved.

In this case, we cannot in general deduce the inclusion $\Sigma_{a,\infty}^*\subset\mathbb{H}^2\times(0,+\infty)$, for the maximum principle does not apply due to the fact that $N^*$ points upwards. However, Lemma~\ref{lem:slab} and the maximum principle do yield the inclusion $\Sigma_{a,\infty}^*\subset\mathbb{H}^2\times(0,+\infty)$ if $k=2$ as a limit of the inclusions $\Sigma_{a,b}^*\subset\mathbb{H}^2\times(0,\ell)$ as $b\to\infty$.

If $0<H<\frac12$, the surface $\Sigma_{a,\infty}^*$ tends to an ideal Scherk $H$-graph as $a\to\infty$ (see Section~\ref{sec:krust}). In this limit, the geodesic curvature of $\Gamma_{2i-1}^*$ converges to $-2H$, so the inclusion $\Sigma_{a,\infty}^*\subset\mathbb{H}^2\times(0,+\infty)$ holds true when $a$ is large enough. On the other hand, Lemma~\ref{lemma:distance-monotonicity} shows that the hyperbolic distance from $\pi(\Gamma_{2}^*)$ to the origin ranges from $0$ to $d_{\infty}$ as $a$ runs from $0$ to $\infty$. Since $\overline\Sigma_{a,\infty}^*$ lies in the convex side of the cylinders over $\Gamma_{2i}^*$, there exists $a_1>0$, depending on $k$ and $H$, such that the asymptotic equidistant curves $\Gamma_2^*$ and $\Gamma_4^*$ in $\Sigma_{a_1,\infty}^*$ share at least one endpoint at infinity. This implies that the endpoints of the curve $\Gamma_3^*$ coincide when $a=a_1$ and $\Gamma_3^*$ has self-intersections if and only if $a<a_1$ (by symmetry, the same happens for any $\Gamma_{2i-1}^*$). Therefore $\Sigma_{a,\infty}^*$ (and hence $\overline\Sigma_{a,\infty}^*$) is not embedded in that cases (see Figure~\ref{Fig-Catenoids}, center).

If $H=\frac12$, then the curves $\Gamma_{2i}^*\subset\overline\Sigma_{a,b}^*$ get close to horocycles at the same time that they diverge to infinity by Lemma~\ref{lemma:distance-monotonicity}.  However, $\overline\Sigma_{a,b}^*$ lies locally in their convex side ($N^*$ points towards the interior of $\Omega^*$ along $\Gamma_{2i}^*$). Therefore, the curves $\Gamma_{2i-1}^*$ cannot be embedded, so neither is $\Sigma_{a,\infty}^*$ (see Figure~\ref{fig:critical-catenoid}). Altogether, we deduce:

\begin{proposition}\label{prop:knodoids-embeddedness}~
\begin{enumerate}[label=\emph{(\alph*)}]
  \item If $0<H<\frac12$, there exist constants $a_1, a_2$ (depending on $k$ and $H$) such that $\overline\Sigma_{a,\infty}^*$ is not embedded for any $a<a_1$ and is embedded for all $a>a_2$.
  
  \item If $H=\frac12$, then $\overline\Sigma_{a,\infty}^*$ is not embedded for any $a>0$.
\end{enumerate}
\end{proposition}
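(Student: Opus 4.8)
The plan is to reduce, as in the saddle-tower and $(H,k)$-noid cases, the embeddedness of $\overline\Sigma_{a,\infty}^*$ to that of the symmetry curves $\Gamma_{2i-1}^*$ together with the multigraph property: since $\overline\Sigma_{a,\infty}^*$ is invariant under the mirror symmetry about $\mathbb{H}^2\times\{0\}$ and under the equiangular family of $k$ vertical planes, it suffices to analyze a fundamental piece lying in a wedge of angle $\frac\pi k$ and to decide whether its projected boundary is embedded therein. The whole argument is then driven by the interplay between the asymptotic equidistant curves $\Gamma_{2i}^*$ at height $+\infty$ and the horizontal curves $\Gamma_{2i-1}^*$ at height $0$.

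For the non-embeddedness in part (a), I would use Lemma~\ref{lemma:distance-monotonicity}: the distance $d(a,\infty)$ from $\pi(q_0^*)$ to each projected asymptotic curve $\pi(\Gamma_{2i}^*)$ increases strictly from $0$ to $d_\infty$ as $a$ ranges over $(0,\infty)$. Because $\overline\Sigma_{a,\infty}^*$ lies locally in the convex side of the vertical cylinders over the $\Gamma_{2i}^*$, two consecutive asymptotic curves $\Gamma_2^*$ and $\Gamma_4^*$, placed symmetrically about the plane through $\Gamma_3$, are forced to share an ideal endpoint at $\partial_\infty\mathbb{H}^2\times\{+\infty\}$ exactly at the threshold where the two curvature-$2H$ equidistant curves first meet at infinity. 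Defining $a_1$ to be this threshold, for $a<a_1$ the curves overlap and the symmetry curve $\Gamma_3^*$ joining them must self-intersect; by the $k$-fold symmetry the same holds for each $\Gamma_{2i-1}^*$, so $\Sigma_{a,\infty}^*$ and hence $\overline\Sigma_{a,\infty}^*$ fail to be embedded.

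The embeddedness for large $a$ rests on the convergence of $\Sigma_{a,\infty}^*$, as $a\to\infty$, to the ideal Scherk $H$-graph $\Sigma_{\infty,\infty}^*$, which is an embedded graph. In this limit the geodesic curvature of $\Gamma_{2i-1}^*$ tends to $-2H$ with respect to $\Omega^*$, which yields the slab inclusion $\Sigma_{a,\infty}^*\subset\mathbb{H}^2\times(0,+\infty)$ once $a$ is large enough. Invoking Proposition~\ref{prop:continuity}, the conjugate surfaces converge on compact subsets, so each $\Gamma_{2i-1}^*$ converges to a disjoint, embedded curvature-$(-2H)$ arc of the Scherk graph and in particular becomes embedded and confined to the wedge of angle $\frac\pi k$. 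Combined with the multigraph property and the slab inclusion, this makes $\overline\Sigma_{a,\infty}^*$ embedded for all $a$ larger than some $a_2$. For part (b) with $H=\frac12$, Lemma~\ref{lemma:distance-monotonicity} shows that the asymptotic horocylinders over $\Gamma_{2i}^*$ escape to infinity, so no obstruction can arise from them; instead, along $\Gamma_{2i-1}^*$ the conormal $N^*$ points into $\Omega^*$ with $\kappa_g>2H=1$ while the surface lies on the convex side, and such a curve of geodesic curvature strictly larger than $1$ enclosing the surface cannot be embedded, as the explicit $k=2$ computation~\eqref{eqn:kg-critical} and Figure~\ref{fig:critical-catenoid} illustrate. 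Hence $\Sigma_{a,\infty}^*$ is non-embedded for every $a>0$.

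The step I expect to be the main obstacle is the embeddedness half of part (a): upgrading the $\mathcal{C}^m$-convergence to the Scherk graph into a \emph{global} embeddedness statement. Convergence on compact subsets only controls a bounded part of the surface, so one must argue separately that the ends remain embedded, i.e., that each $\Gamma_{2i-1}^*$ approaches its limiting equidistant curve strongly enough (using the finite-total-curvature/asymptotic description of the ends) to preclude self-intersections arbitrarily far out.
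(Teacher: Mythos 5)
Your proposal is correct and follows essentially the paper's own proof: the same threshold $a_1$ at which the asymptotic equidistant curves $\Gamma_2^*$ and $\Gamma_4^*$ share an ideal endpoint (forcing self-intersections of $\Gamma_3^*$ when $a<a_1$), the same Scherk-limit argument giving the inclusion $\Sigma_{a,\infty}^*\subset\mathbb{H}^2\times(0,+\infty)$ for large $a$, and the same use of Lemma~\ref{lemma:distance-monotonicity} together with the convex-side/curvature behaviour when $H=\frac12$. Two small remarks: along $\Gamma_{2i-1}^*$ the normal $N^*$ points to the \emph{exterior} of $\Omega^*$ (it is along the even curves $\Gamma_{2i}^*$ that it points inward), a slip that does not affect your $H=\frac12$ argument since what matters is that $\kappa_g>1$ with respect to a consistent normal; and the obstacle you flag at the end is settled in the paper not by strengthening the compact convergence but by the endpoint argument itself, which yields that $\Gamma_{2i-1}^*$ has self-intersections if and only if $a<a_1$, so that for large $a$ only the half-space inclusion needs the Scherk limit.
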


\begin{remark}
  If $k=2$, then $a_1=a_2$ in item (a) by Proposition~\ref{prop:slab}, as the $(H,k)$-nodoids are obtained as limits of the corresponding saddle towers. In the case $k\geq 3$ we know that $\Sigma_{a,\infty}^*$ is embedded if and only if $a\geq a_1$ but in general we cannot assure that it is contained in $\mathbb{H}^2\times(0,+\infty)$.
\end{remark}

\section{The Krust property}\label{sec:krust}
This last section is devoted to prove Theorem~\ref{th1}.

Let us first consider $0<H\leq\frac{1}{2}$. Given $k=2$ and
$a\in(0,\infty)$, the domain $\Omega_{a,\infty}$ is either an ideal quadrilateral in $\mathbb{H}^2(4H^2-1)$ or a strip in $\mathbb{R}^2$, in particular a convex domain. However, Proposition~\ref{prop:knodoids-embeddedness} says that $\Sigma_{a,\infty}^*$ is not embedded if $H=\frac{1}{2}$ or $a<a_1$ and $0<H<\frac12$. Hence $\Sigma_{a,\infty}$ gives the desired counterexample in that case.

We now assume $H>\frac{1}{2}$. Let $T\subset\mathbb{S}^2(4H^2-1)$ be a convex equilateral triangle of vertices $p_0$, $p_1$ and $p_2$ labeled counterclockwise. Consider a geodesic polygon in $\mathbb{E}(4H^2-1,H)$ consisting of three horizontal geodesic segments projecting to the sides of $T$ and two vertical segments $\Gamma_1$ and $\Gamma_2$ projecting to $p_1$ and $p_2$, respectively. Up to vertical translations, this pentagon only depends on the length $a$ of $\Gamma_1$, once $T$ is fixed. Since $\pi^{-1}(T)\subset\mathbb{E}(4H^2-1,H)$ is a mean convex body in the sense of Meeks and Yau, we can find a minimal graph $\Sigma\subset\mathbb{E}(4H^2-1,H)$, which is a graph over the interior of $T$ spanning the pentagon, see~\cite[Proposition~2]{ManTor}. Here, we are assuming that $\mathbb{E}(4H^2-1,H)$ is not globally a Berger sphere but the Riemannian $3$-manifold given by~\eqref{eqn:model-Ekt} in which vertical fibers have infinite length. The angle of rotation of the upward-pointing normal to $\Sigma$ along $\Gamma_1$ (resp.\ $\Gamma_2$) is increasing (resp.\ decreasing), in a similar way to Figure~\ref{Fig-Orientation-st-positive}. The corresponding curve $\Gamma_1^*$ (resp.\ $\Gamma_2^*$) has geodesic curvature larger (resp.\ smaller) than $2H$ in view of Equation~\eqref{eqn:theta}. By making $a$ large enough, we can obtain a region $R\subset\Sigma$ bounded by two almost horizontal curves and two vertical subarcs of $\Gamma_1$ and $\Gamma_2$ of arbitrarily large length and geodesic curvature arbitrarily close to $2H$. This is the shaded region depicted on the left-hand side of Figure~\ref{fig:krust}, foliated by curves very close to horizontal geodesics along which the angle function is almost zero. Their conjugate curves are almost vertical geodesics joining $\Gamma_1^*$ and $\Gamma_2^*$ (which lie at different heights).

\begin{figure}
\begin{center}
\includegraphics[width=0.8\textwidth]{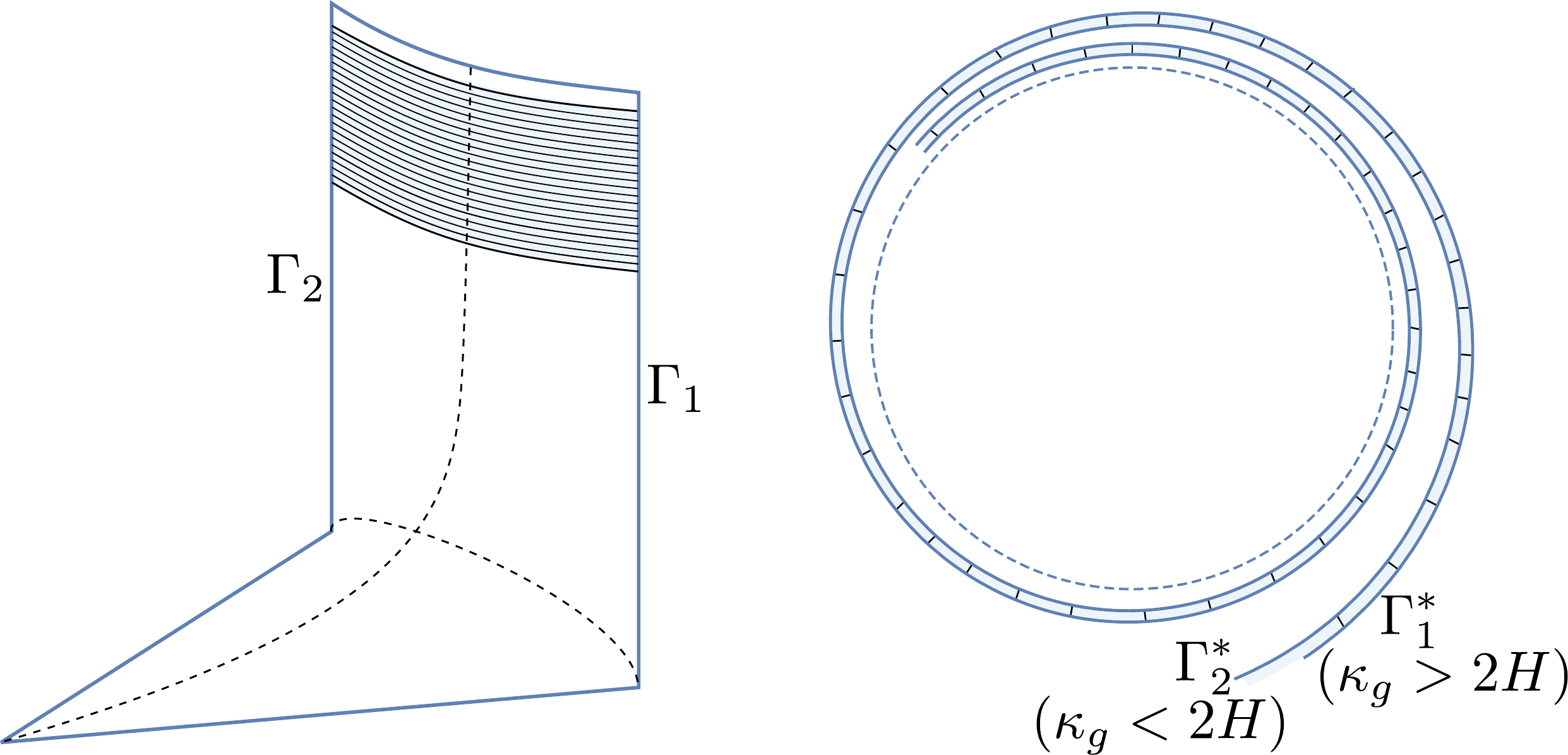}
\end{center}
\caption{The geodesic polygon that produces a counterexample to the Krust property for $H>\frac12$ (left) and the projection of the conjugate of the shaded region (right). The interior circle of $\mathbb{H}^2$ of curvature $2H$ is represented in dashed line.}\label{fig:krust}
\end{figure}

Assume by contradiction that the Krust property holds. Hence the corresponding region $R^*\subset\Sigma^*$ is a graph. Then $\Gamma_1^*$ projects to a curve very close to an $H$-cylinder over a circle of $\mathbb{H}^2$ and its subarc in $\partial R^*$ approaches the circle from outside because its geodesic curvature is larger to $2H$. If $a$ is chosen large enough so that the subarc goes several times around the circle, then the graphical condition implies that $\Gamma_2^*\cap\partial R^*$ is intertwined with the subarc of $\Gamma_1^*$, so their projections spiral together from outside the circle as shown in the right-hand side of Figure~\ref{fig:krust}. This contradicts the fact that $\Gamma_2^*$ has geodesic curvature smaller than $2H$.

\end{document}